\documentclass[oneside,french,english]{amsart}
\usepackage[T1]{fontenc}
\usepackage[latin9]{inputenc}
\usepackage{geometry}
\geometry{verbose,tmargin=4cm,bmargin=4cm,lmargin=4cm,rmargin=4cm}
\usepackage{color}
\usepackage{babel}
\makeatletter
\addto\extrasfrench{%
   \providecommand{\fg}{\ifdim\lastskip>\z@\unskip\fi~\frqq}%
}

\makeatother
\usepackage{units}
\usepackage{amsthm}
\usepackage{amstext}
\usepackage{amssymb}
\usepackage{xargs}[2008/03/08]
\usepackage[unicode=true,pdfusetitle,
 bookmarks=true,bookmarksnumbered=false,bookmarksopen=false,
 breaklinks=true,pdfborder={0 0 1},backref=false,colorlinks=true]
 {hyperref}
\usepackage{breakurl}

\makeatletter

\newcommand{\noun}[1]{\textsc{#1}}

\numberwithin{equation}{section}
\numberwithin{figure}{section}
 \newcommand\thmsname{\protect\theoremname}
 \newcommand\nm@thmtype{theorem}
 \theoremstyle{plain}
 
 \newenvironment{namedthm}[1][Undefined Theorem Name]{
   \ifx{#1}{Undefined Theorem Name}\renewcommand\nm@thmtype{theorem*}
   \else\renewcommand\thmsname{#1}\renewcommand\nm@thmtype{namedtheorem}
   \fi
   \begin{\nm@thmtype}}
   {\end{\nm@thmtype}}
  \theoremstyle{remark}
  \newtheorem*{rem*}{\protect\remarkname}
\newenvironment{lyxlist}[1]
{\begin{list}{}
{\settowidth{\labelwidth}{#1}
 \setlength{\leftmargin}{\labelwidth}
 \addtolength{\leftmargin}{\labelsep}
 }}
{\end{list}}
\theoremstyle{plain}
\newtheorem{thm}{\protect\theoremname}[section]
  \theoremstyle{plain}
  \newtheorem{cor}[thm]{\protect\corollaryname}
  \theoremstyle{plain}
  \newtheorem{lem}[thm]{\protect\lemmaname}
  \theoremstyle{definition}
  \newtheorem{defn}[thm]{\protect\definitionname}
  \theoremstyle{plain}
  \newtheorem{prop}[thm]{\protect\propositionname}
  \theoremstyle{remark}
  \newtheorem{rem}[thm]{\protect\remarkname}

\usepackage{kpfonts}
\usepackage{esint}

\AtBeginDocument{
  
}

\makeatother

  \addto\captionsenglish{\renewcommand{\corollaryname}{Corollary}}
  \addto\captionsenglish{\renewcommand{\definitionname}{Definition}}
  \addto\captionsenglish{\renewcommand{\lemmaname}{Lemma}}
  \addto\captionsenglish{\renewcommand{\propositionname}{Proposition}}
  \addto\captionsenglish{\renewcommand{\remarkname}{Remark}}
  \addto\captionsenglish{\renewcommand{\theoremname}{Theorem}}
  \addto\captionsfrench{\renewcommand{\corollaryname}{Corollaire}}
  \addto\captionsfrench{\renewcommand{\definitionname}{Définition}}
  \addto\captionsfrench{\renewcommand{\lemmaname}{Lemme}}
  \addto\captionsfrench{\renewcommand{\propositionname}{Proposition}}
  \addto\captionsfrench{\renewcommand{\remarkname}{Remarque}}
  \addto\captionsfrench{\renewcommand{\theoremname}{Théorème}}
  \providecommand{\corollaryname}{Corollary}
  \providecommand{\definitionname}{Definition}
  \providecommand{\lemmaname}{Lemma}
  \providecommand{\propositionname}{Proposition}
  \providecommand{\remarkname}{Remark}
  \providecommand{\theoremname}{Theorem}
\providecommand{\theoremname}{Theorem}

\begin{document}
\global\long\def\ww#1{\mathbb{#1}}

\global\long\def\germ#1{\ww C\left\{  #1\right\}  }

\global\long\def\mero#1{\ww C\left\{  \left(#1\right)\right\}  }

\global\long\def\fmero#1{\ww C\left(\left(#1\right)\right)}

\global\long\def\frml#1{\ww C\left[\left[#1\right]\right]}

\global\long\def\pol#1#2{\ww C_{#1}\left[#2\right]}

\global\long\def\ratio#1{\ww C\left(#1\right)}

\global\long\def\pp#1{\frac{\partial}{\partial#1}}

\global\long\def\flw#1#2#3{\Phi_{#1}^{#2}#3}

\global\long\def\lie#1{\mathcal{L}\left(#1\right)}

\global\long\def\fol#1{\mathcal{F}_{#1}}

\global\long\def\trs{\pitchfork}

\global\long\def\zsk{\mathbb{Z}/k}

\global\long\def\ddt{\cdot}

\global\long\def\rrel{\mathcal{R}}

\global\long\def\surj{\twoheadrightarrow}

\global\long\def\inj{\hookrightarrow}

\global\long\def\void{\emptyset}

\global\long\def\tx#1{\mathrm{#1}}

\global\long\def\tt#1{\mathtt{#1}}

\global\long\def\nf#1#2{\nicefrac{#1}{#2}}

\global\long\def\ii{\tx i}

\global\long\def\dd#1{\tx d#1}

\global\long\def\gal#1{\tt{Gal}\left(#1\right)}

\global\long\def\dlie#1{\frak{Gal}\left(#1\right)}

\newcommandx\dtrans[1][usedefault, addprefix=\global, 1=Z]{\frak{Trans}\left(#1\right)}

\newcommandx\dtang[1][usedefault, addprefix=\global, 1=Z]{\frak{Tang}\left(#1\right)}

\global\long\def\aut#1{\tt{Aut}\left(#1\right)}

\newcommandx\sol[2][usedefault, addprefix=\global, 1=Z, 2=\delta]{\tt{Sol}_{#2}\left(#1\right)}

\newcommandx\tang[1][usedefault, addprefix=\global, 1=Z]{\tt{Tang}\left(#1\right)}

\newcommandx\trans[1][usedefault, addprefix=\global, 1=Z]{\tt{Trans}\left(#1\right)}

\title{Meromorphic infinitesimal affine actions of the plane{*}}

\selectlanguage{french}%

\address{Laboratoire I.R.M.A., 7 rue R. Descartes, Université de Strasbourg,
67084 Strasbourg cedex, France}

\selectlanguage{english}%

\email{\href{mailto:teyssier@math.unistra.fr}{teyssier@math.unistra.fr}}

\urladdr{\href{http  ://math.u-strasbg.fr/~teyssier/}{http  ://math.u-strasbg.fr/$\sim$teyssier/}}

\author{Loïc TEYSSIER}

\date{November 2013}
\begin{abstract}
We study complex Lie algebras spanned by pairs $\left(Z,Y\right)$
of germs of a meromorphic vector field of the complex plane satisfying
$\left[Z,Y\right]=\delta Y$ for some $\delta\in\ww C$. This topic
relates to Liouville\textendash{}integrability of the differential
equation induced by the foliation underlying $Z$. We give a direct
geometric proof of a result by \noun{M.~Berthier} and \noun{F.~Touzet}
characterizing germs of a foliation admitting a first\textendash{}integral
in a Liouvillian extension of the standard differential field. In
so doing we study transverse and tangential rigidity properties when
$Z$ is holomorphic and its linear part is not nilpotent. A second
part of the article is devoted to computing the Galois\textendash{}Malgrange
groupoid of meromorphic $Z$.
\end{abstract}

\thanks{{*}PrePrint}

\maketitle

\section{Introduction}

Most reduced singularities of holomorphic foliations $\fol{}$ in
the complex plane are locally analytically conjugate to a simple polynomial
model, either its linear part or a Dulac\textendash{}Poincaré normal
form~\cite{Dulac}, all of whom admit Liouvillian first\textendash{}integrals.
The other cases consist in quasi\textendash{}resonant saddles (small
divisors problem) and resonant singularities (encompassing resonant
saddles and saddle\textendash{}nodes). \noun{M.~Berthier} and \noun{F.~Touzet}
have studied in~\cite{BerTouze} these difficult cases, characterizing
those admitting a Liouvillian first\textendash{}integral $H$. By
carefully studying the transverse dynamics they worked out that $\fol{}$
needs to be of a certain type. For quasi\textendash{}resonant singularities
$H$ is of moderate growth along the separatrices of the foliation
(Nilsson class); in that case $\fol{}$ is analytically linearizable
(see~\cite{Cervo} as well). For resonant singularities at least
one separatrix of $\fol{}$ is a line of essential singularities for
$H$; here $\fol{}$ is conjugate to a rather simple model and the
Martinet\textendash{}Ramis modulus~\cite{MaRa-Res,MaRa-SN} is an
affine map (in a suitable ramified chart). In any cases the local
form of $\fol{}$ is induced by a first\textendash{}order Bernoulli
differential equation. This article mainly provides a geometrical
proof of that fact, simplifying the arguments through the use of a
unified framework which is more explicit and does not need the study
of functional moduli or transverse dynamics. This formalism focuses
on meromorphic vector fields $Z$ tangent to $\fol{}$ and allows
to provide normal forms for holomorphic ones. We are also interested
in computing the Galois\textendash{}Malgrange groupoid~\cite{Casa,Malg}
associated to $Z$.

\bigskip{}

Our main object of study are Lie algebras $\lie{Z,Y}$ over $\ww C$
generated by two germs at $\left(0,0\right)$ of a non\textendash{}zero,
meromorphic vector field. We will be interested only in finite\textendash{}dimensional
algebras, more precisely those admitting a \textbf{ratio} $\delta\in\ww C$
such that 
\begin{eqnarray*}
\left[Z,Y\right] & = & \delta Y\,.
\end{eqnarray*}
We speak of \textbf{Abelian Lie algebras} when $\delta=0$ and \textbf{affine
Lie algebras} when $\delta\neq0$. In the latter case the ratio may
be chosen equal to $1$ by considering $\frac{1}{\delta}Z$ instead.
The most interesting cases arise when the locus of tangency between
$Z$ and $Y$ is of codimension at least $1$, in which case we shall
write $Z\pitchfork Y$ and speak of \textbf{non\textendash{}degenerate}
affine Lie algebras. These objects correspond to (local infinitesimal)
affine actions of the plane $\ww C^{2}$. Save for explicit mention
to the contrary we shall always assume that $Z\trs Y$. The holomorphic
foliation $\fol Z$, whose leaves are the integral curves of $Z$,
admits a Godbillon\textendash{}Vey sequence~\cite{GodVey} of length
$1$ or 2. It is given by the dual basis $\left(\tau_{Z},\tau_{Y}\right)$
of $\left(Z,Y\right)$: the pair of meromorphic $1$\textendash{}forms
indeed satisfies
\begin{eqnarray*}
\dd{\tau_{Y}} & = & \delta\tau_{Y}\wedge\tau_{Z}\,,\\
\dd{\tau_{Z}} & = & 0\,.
\end{eqnarray*}
 The $1$\textendash{}form $\tau_{Y}$ defines the same foliation
as $Z:=A\pp x+B\pp y$. Because of these properties, the solutions
to the ordinary differential equation 
\begin{eqnarray}
y' & = & \frac{B(x,y)}{A(x,y)}\label{eq:eq_diff}
\end{eqnarray}
are given implicitly by level curves of the first\textendash{}integral
$H$ lying in a Liouvillian extension $\ww L$ over the differential
field $\ww K_{0}:=\mero{x,y}$ of germs at $\left(0,0\right)$ of
a meromorphic function~\cite{Casa,BerTouze}, equipped with the standard
derivations $\left(\pp x,\pp y\right)$. Such an extension is constructed
by taking a finite number of extensions $\ww K_{n+1}:=\ww K_{n}\left\langle f_{n}\right\rangle $
of the following three types : $f_{n}$ is algebraic over $\ww K_{n}$,
a derivative of $f_{n}$ lies in $\ww K_{n}$, a logarithmic derivative
of $f_{n}$ lies in $\ww K_{n}$. The function defined by 
\begin{eqnarray*}
H & := & \int\exp\left(-\delta\int\tau_{Z}\right)\tau_{Y}
\end{eqnarray*}
is a multiform first\textendash{}integral of $Z$ belonging to a Liouvillian
extension of $\ww K_{0}$ (one can check that $\exp\left(-\delta\int\tau_{Z}\right)\tau_{Y}$
is a multiform closed $1$\textendash{}form and that the Lie derivative
$Z\cdot H$ of $H$ along $Z$ vanishes). 

\bigskip{}

A few years ago \noun{B.~Malgrange} gave a candidate Galois theory
for non\textendash{}linear differential equations~\cite{Malg}, by
building an object (a $\mathcal{D}$\textendash{}groupoid) which encodes
information regarding whether or not a differential equation, for
instance of the form~\ref{eq:eq_diff}, can be solved by quadrature.
He proved that in the case of a linear system this groupoid coincides
with the linear group of Picard\textendash{}Vessiot associated to
the system. \noun{G.~Casale} then studied intensively the Galois\textendash{}Malgrange
groupoid for codimension $1$ foliations, and related indeed the existence
of Liouvillian first\textendash{}integrals, the existence of short
Godbillon\textendash{}Vey sequences and the smallness of the groupoid~\cite{Casa}.
We contribute here to this work by computing explicitly the Galois\textendash{}Malgrange
groupoid of the vector field $Z$. This result complements a previous
work of \noun{E.~Paul} about Galois\textendash{}Malgrange groupoids
of quasi\textendash{}homogeneous foliations~\cite{PaulEnv}.

\subsection{Globalizing holomorphic vector fields $Z$}

~

We begin with giving general results on non\textendash{}degenerate
Lie algebras of ratio $\delta$ in Section~\ref{sec:Prelim} when
$Z$ is \emph{holomorphic}. If $Z$ is regular at $\left(0,0\right)$
then $\psi^{*}Z=\pp y$ for some analytic change of local coordinates,
as follows from the theorem of rectification. On the contrary when
$Z\left(0,0\right)=0$ we say that $Z$ is \textbf{non\textendash{}nilpotent}
if its linear part%
\footnote{The linear part of a vector field $\left(ax+by+\cdots\right)\pp x+\left(cx+dy+\cdots\right)\pp y$
is identified with $\left[\begin{array}{cc}
a & b\\
c & d
\end{array}\right]$.%
} at $\left(0,0\right)$ is neither. In that case we label the eigenvalues
$\left\{ \lambda_{1},\lambda_{2}\right\} $ in such a way that $\lambda_{2}\neq0$.
A direct computation (Lemma~\ref{lem:degen}) ensures that $Z$ is
non\textendash{}nilpotent when $\delta\neq0$. 
\begin{namedthm}[Globalization Theorem]
Let $\lie{Z,Y}$ be a non\textendash{}degenerate Lie algebra of ratio
$\delta$ such that $Z$ is non\textendash{}nilpotent. Then $Z$ is
glocal, in the sense that it admits a polynomial form in some local
analytic chart around $\left(0,0\right)$.
\end{namedthm}
If some non\textendash{}nilpotent vector field $Z$ admits a non\textendash{}isolated
singularity at $\left(0,0\right)$ it defines a regular foliation
$\fol Z$ and there exists a local system of coordinates in which
$Z=\lambda_{2}y\pp y$ with $\lambda_{2}\neq0$ (Corollary~\ref{cor:non-isolated}).
We focus next our argument to the case of an isolated singularity.
For most $\lambda:=\nf{\lambda_{1}}{\lambda_{2}}$ it is well known~\cite{Dulac}
that either $Z$ is conjugate to $Z_{0}$ (linearizable) or to a Dulac\textendash{}Poincaré
polynomial form. We will therefore explore only the following non\textendash{}trivial
cases, allowing to cover the remaining cases of non\textendash{}nilpotent
singularities and prove the globalization theorem:
\begin{itemize}
\item quasi\textendash{}resonant saddles ($\lambda\in\ww R_{<0}\backslash\ww Q$),
studied in Section~\ref{sec:irrat}, where we recover the fact that
$Z$ is actually linearizable,
\item resonant singularities ($\lambda\in\ww Q_{\leq0}$, non\textendash{}linearizable),
studied in Section~\ref{sec:res-sad}, where we establish that $Z$
is given in an appropriate local chart by a Bernoulli vector field
with polynomial coefficients.\end{itemize}
\begin{rem*}
~
\begin{enumerate}
\item We give in the course of the article an explicit family of polynomial
normal forms for resonant vector fields meeting the hypothesis of
the theorem, generalizing that of~\cite{Tey-ExSN} obtained for foliations.
\item I do not know whether the globalization theorem still holds when $Z$
is meromorphic. As there exists non\textendash{}glocal foliations~\cite{GenzTey}
this question is non\textendash{}trivial.
\end{enumerate}
\end{rem*}

\subsubsection{Normalization procedure }

~

The techniques of this article can be applied to other non\textendash{}nilpotent
singularities in order to obtain normal forms $\left(Z,Y\right)$
unique up to the action of a finite\textendash{}dimensional space.
This question is non\textendash{}trivial in the presence of a vector
field $Z$ with a non\textendash{}constant meromorphic first\textendash{}integral.
It is possible to reduce $Y$ to a rational vector field using isotropies
of $Z$. These computations are reasonably easy and therefore not
included in the present paper. We nonetheless present the general
procedure of normalization of the pair $\left(Z,Y\right)$ when $Z$
is holomorphic and admits an isolated singularity at $\left(0,0\right)$
with a non\textendash{}nilpotent linear part. We will apply this scheme
for resonant and quasi\textendash{}resonant singularities.
\begin{lyxlist}{00.00.0000}
\item [{\textbf{Preparation}}] We can associate to $Z$ a non\textendash{}degenerate
Lie algebra $\lie{X_{0},Y_{0}}$ of ratio $\delta_{0}$ (the latter
is generically zero and always a holomorphic first\textendash{}integral
of $Y_{0}$) such that in a suitable analytic chart 
\begin{eqnarray*}
Z & = & U\left(X_{0}+RY_{0}\right)
\end{eqnarray*}
for holomorphic $Y_{0}$, $U$ and $R$ where $U(0,0)=1$ and $R$
vanishes sufficiently. It will turn out that $Y_{0}=y\pp y$ or $Y_{0}=\pp y$.
Define
\begin{eqnarray*}
X & := & X_{0}+RY_{0}.
\end{eqnarray*}

\item [{\textbf{Formal~normalization}}] We show that $Z$ is formally
conjugate by $\hat{\psi}$ to a polynomial vector field 
\begin{eqnarray*}
Z_{0} & := & QX_{0}
\end{eqnarray*}
 where $Q(0,0)=1$. Using first a conjugacy formed with the flow along
$Y_{0}$ we conjugate $X_{0}$ to $X$ (we speak of \textbf{transverse
normalization}), then using a conjugacy given by the flow of a suitable
$QX$ we conjugate the latter to $Z$ (we speak of \textbf{tangential
normalization}). 
\item [{\textbf{Rigidification}}] A formal computation ensures that the
\emph{a priori} formal vector field $\hat{Y}:=\hat{\psi}^{*}Y$ is
actually meromorphic. We deduce that $\left(Z,Y\right)$ is analytically
conjugate to some couple $\left(\hat{Z},\hat{Y}\right)$ where 
\begin{eqnarray*}
\hat{Z} & := & Z_{0}+K\hat{Y}
\end{eqnarray*}
with $K$ in the kernel of $\hat{Y}\cdot$. Those are Berthier\textendash{}Touzet
foliations.
\item [{\textbf{Reduction}}] We next use isotropies of $K\hat{Y}$ to reduce
$K$ to a polynomial. Then using isotropies of $\hat{Z}$, we conjugate
$\hat{Y}$ to a global vector field $\tilde{Y}$. 
\end{lyxlist}

\subsubsection{Auxiliary rigidity results}

~

The first result of rigidity addresses the rigidity of tangential
transforms and is of general interest since its scope reaches beyond
the case of transversely affine~/~Abelian vector fields. The following
theorem is a consequence of Corollary~\ref{cor:irrat-tang-rigid}
and Proposition~\ref{pro:res-sad-tang-rigid}.
\begin{thm}
\label{thm:rigid-tang}Assume that $Z$ is a holomorphic vector field
with an isolated, non\textendash{}nilpotent singularity at $\left(0,0\right)$.
There exists a closed meromorphic 1\textendash{}form $\tau$ such
that $\tau(Z)=1$ if, and only if, $Z$ is analytically conjugate
to a vector field $QX$ introduced in the previous paragraph.
\end{thm}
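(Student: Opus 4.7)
The plan is to prove both implications separately, leveraging the case split already built into the paper: non\textendash{}nilpotent singularities fall into the quasi\textendash{}resonant range ($\lambda\in\ww R_{<0}\backslash\ww Q$, handled in Section~\ref{sec:irrat}) and the resonant range ($\lambda\in\ww Q_{\leq0}$, handled in Section~\ref{sec:res-sad}), whence the theorem follows by combining Corollary~\ref{cor:irrat-tang-rigid} and Proposition~\ref{pro:res-sad-tang-rigid}. Here I sketch the unified strategy underlying both.

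For the direction $(\Leftarrow)$, suppose $\psi$ is an analytic local diffeomorphism with $\psi_{*}(QX)=Z$. It suffices to produce a closed meromorphic $1$\textendash{}form $\tau_{0}$ on the model side with $\tau_{0}(QX)=1$, since then $\tau:=(\psi^{-1})^{*}\tau_{0}$ is closed and satisfies $\tau(Z)=\tau_{0}(QX)=1$. To build $\tau_{0}$ I would exploit the non\textendash{}degenerate Lie algebra structure $\lie{X_{0},Y_{0}}$ of ratio $\delta_{0}$: its closed dual form $\tau_{X_{0}}$ satisfies $\tau_{X_{0}}(X_{0})=1$, $\tau_{X_{0}}(Y_{0})=0$, hence $\tau_{X_{0}}(X)=1$ and $\tau_{X_{0}}/Q$ is a time form for $QX$, although not in general closed. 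I would correct it by adding a suitable multiple $g\,\tau_{Y_{0}}$ of the transverse form, reducing closedness to a cohomological equation which is solvable at the level of the polynomial model thanks to the Bernoulli/polynomial structure of $Q$.

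For the direction $(\Rightarrow)$, assume the closed $\tau$ with $\tau(Z)=1$ exists. The idea is to bootstrap the already\textendash{}established analytic transverse normalization into a full analytic normalization, using $\tau$ to control the tangential part. After transverse normalization we may assume $Z=V\cdot QX$ for some formal unit $V$ with $V(0,0)=1$. Searching for a tangential conjugacy of the form $p\mapsto\varphi_{QX}^{T(p)}(p)$ realizing $\varphi_{*}(V\cdot QX)=QX$, a direct computation with flows reduces the problem to the cohomological equation
\[
QX\cdot T \;=\; \frac{1}{V}-1.
\]
The transverse\textendash{}conjugate pullback of $\tau$ is a closed meromorphic form $\tilde{\tau}$ with $\tilde{\tau}(QX)=1/V$; subtracting the model's closed time form $\tau_{0}$ (provided by direction $(\Leftarrow)$) gives a closed $1$\textendash{}form satisfying $(\tilde{\tau}-\tau_{0})(QX)=1/V-1$, and a primitive $T$ of $\tilde{\tau}-\tau_{0}$ solves the cohomological equation.

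The principal obstacle is to verify that $T$ is an actual single\textendash{}valued meromorphic function on a neighborhood of $(0,0)$, rather than only a multivalued primitive. Both $\tau_{0}$ and $\tilde{\tau}$ are meromorphic closed forms generically carrying logarithmic poles along the separatrices of $\fol{Z}$, so their primitives have a priori nontrivial monodromy, and one must check that the residues of $\tilde{\tau}-\tau_{0}$ along each separatrix vanish. This is where the explicit polynomial form of $Q$, the normalizations $Q(0,0)=V(0,0)=1$, and the distinction between the quasi\textendash{}resonant and resonant cases enter, producing analogous but case\textendash{}dependent residue computations that ultimately yield the analytic tangential conjugacy.
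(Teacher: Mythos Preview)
Your top-level reduction to Corollary~\ref{cor:irrat-tang-rigid} and Proposition~\ref{pro:res-sad-tang-rigid} is exactly what the paper does, and your sketch captures the right spirit: compare $\tau$ with an explicit model time-form and study the primitive of the difference. But several points in the sketch do not hold up.

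For $(\Leftarrow)$ your correction step is unnecessary. The dual form $\tau_{X_{0}}$ is $\frac{\dd x}{\lambda_{1}x}$ in the quasi-resonant case and $\frac{\dd u}{qu^{k+1}}$ in the resonant case; in either case it is a function of a single coordinate (respectively $x$ and $u$) times its differential, and since $Q$ depends on that same coordinate, $\tau_{X_{0}}/Q$ is already closed. No cohomological equation is needed.

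For $(\Rightarrow)$ there are two genuine gaps. First, what you call ``transverse normalization'' is not available here: the transverse step is only formal and its convergence is proved \emph{after} the tangential rigidity, using it as input. What you actually have at this stage is the analytic preparation $Z=UX$ obtained by straightening the separatrices; writing $V=U/(Q\circ u)$ then gives $Z=V\cdot QX$. You should also record (as the paper does) that the poles of any closed time-form $\tau$ lie on the separatrices of $Z$, so that the primitive lives in the space $\mathcal{A}$ of expressions $\frac{A}{x^{N}y^{M}}+\alpha\log x+\beta\log y$.

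Second, and more seriously, your plan to ``check that the residues of $\tilde\tau-\tau_{0}$ vanish'' does not work: they need not. Lemma~\ref{lem:irrat-tang-rigid} (and its resonant analogue Lemma~\ref{lem:res-sad-tang-rigid}) show that any $F\in\mathcal{A}$ with $X\cdot F=G$ holomorphic decomposes as $F=\hat F + G(0,0)\log y+\alpha(\log x-\lambda\log y)$ with $\alpha\in\ww C$ \emph{undetermined} by $G$. In our situation $G(0,0)=0$ so $\beta=0$, but $\alpha$ depends on the particular $\tau$ and there is no reason it vanishes. The paper's argument does not try to kill the logarithmic part; instead those lemmas show that the polar part of $F$ is forced to vanish ($A=x^{N}y^{M}\tilde A$), so that a genuine formal power series $\hat F$ can be extracted from the multivalued $F$, and uniqueness of the formal solution then gives $T=\hat F-\hat F(0,0)$, which is convergent because $F$ is. That extraction step, not a residue computation, is the crux.
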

The 1\textendash{}form $\tau$ is called a \textbf{time\textendash{}form}
for $Z$ and we will prove that $QX$ always admits a closed time\textendash{}form:
the 1\textendash{}form $\tau_{0}$ given by the dual basis of $\left(Z_{0},Y_{0}\right)$. 
\begin{cor}
Assume $\lie{Z,Y}$ is a Lie algebra of ratio $\delta$ with $Z$
non\textendash{}nilpotent. If $\delta\neq0$ or $\lie{Z,Y}$ is non\textendash{}degenerate
then $Z$ is analytically conjugate to $QX$.\end{cor}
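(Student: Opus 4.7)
The strategy is to reduce everything to Theorem~\ref{thm:rigid-tang}, whose ``if'' direction gives precisely the desired conclusion: $Z$ is analytically conjugate to some $QX$ as soon as it carries a closed meromorphic time-form, i.e.\ a closed meromorphic $1$--form $\tau$ with $\tau(Z)=1$. The plan therefore boils down to producing such a $\tau$ out of the bracket relation $[Z,Y]=\delta Y$.

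First I would handle the non-degenerate case $Z\trs Y$. Then $(Z,Y)$ is a meromorphic frame of the tangent space and its dual basis $(\tau_Z,\tau_Y)$ consists of meromorphic $1$--forms. The equation $\dd{\tau_Z}=0$ has already been recorded in the introduction as a direct consequence of $[Z,Y]=\delta Y$, so $\tau:=\tau_Z$ is the required time-form.

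It remains to cover the other possibility allowed by the hypotheses, namely $\delta\neq 0$ with $\lie{Z,Y}$ degenerate. Degeneracy forces $Z\wedge Y\equiv 0$, so $Y=fZ$ for a non-zero meromorphic function $f$. Expanding $[Z,fZ]=(Z\cdot f)\,Z$ and comparing with $\delta Y=\delta fZ$ gives $Z\cdot f=\delta f$. Because $\delta\neq 0$, the logarithmic differential
\begin{eqnarray*}
\tau & := & \frac{1}{\delta}\cdot\frac{\dd f}{f}
\end{eqnarray*}
is closed, meromorphic, and satisfies $\tau(Z)=1$. Theorem~\ref{thm:rigid-tang} then concludes in both situations.

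I do not anticipate a genuine obstacle: the technical core is packaged inside Theorem~\ref{thm:rigid-tang}, and the structure equation for $\tau_Z$ was established earlier. The only point that requires care is the degenerate subcase, where the failure of transversality has to be translated into an honest proportionality $Y=fZ$; this is routine in complex dimension~$2$.
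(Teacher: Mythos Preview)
Your proposal is correct and follows essentially the same route as the paper: in the non-degenerate case you invoke the closedness of $\tau_Z$ from the dual basis (Lemma~\ref{lem:godbillon}), and in the degenerate case with $\delta\neq 0$ you write $Y=fZ$, derive $Z\cdot f=\delta f$, and use $\frac{\dd f}{\delta f}$ as the closed time-form, exactly as the paper does. The only cosmetic difference is that the paper treats the degenerate case first.
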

\begin{proof}
If the Lie algebra $\lie{Z,Y}$ of ratio $\delta\neq0$ is degenerate
then $Y=FZ$ for some meromorphic $F\neq0$ and a direct computation
shows that $F$ satisfies
\begin{eqnarray*}
Z\cdot F & = & \delta F\,.
\end{eqnarray*}
The meromorphic $1$\textendash{}form $\frac{\dd F}{\delta F}$ is
a closed time\textendash{}form for $Z$ so that, according to the
previous theorem, it is analytically conjugate to $QX$. In general
it is not possible to give simpler models. In the case of a non\textendash{}degenerate
affine Lie algebra the dual basis $\left(\tau_{Z},\tau_{Y}\right)$
associated to $\left(Z,Y\right)$ provides a closed time\textendash{}form
$\tau_{Z}$ for $Z$, which is therefore analytically conjugate to
$QX$.
\end{proof}
\bigskip{}
We then use transverse changes of coordinates to send $X_{0}$ onto
$X$. The second result addresses the rigidity of transverse transforms
within non\textendash{}degenerate algebras. When the web $\Omega:=\left(Z_{0},Y_{0},\hat{Y}\right)$
is non\textendash{}degenerate (\emph{i.e.} the vector fields are generically
pairwise transverse to each\textendash{}others) the convergence will
follow, more precisely:
\begin{thm}
\label{thm:rigid-tr}Let $\lie{Z,Y}$ be a non\textendash{}degenerate
Lie algebra of ratio $\delta$ and consider the associated 3\textendash{}web
$\Omega:=\left(Z_{0},Y_{0,}\hat{Y}\right)$. If $Z$ is non\textendash{}nilpotent
then:
\begin{enumerate}
\item either $\Omega$ is non\textendash{}degenerate and there exists an
analytic conjugacy between $Z$ and $Z_{0}$.
\item either $\hat{Y}$ is not transverse to $Y_{0}$ and, in general, any
formal conjugacy between $Z$ and $Z_{0}$ diverges.
\end{enumerate}
\end{thm}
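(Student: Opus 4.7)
For part~(1), the plan is to reduce to a purely transverse normalization problem and then solve it convergently using the extra meromorphic time-form supplied by $\hat Y$. I would first invoke Theorem~\ref{thm:rigid-tang}: because $\lie{Z,Y}$ is affine non-degenerate, $\tau_Z$ in the dual coframe $(\tau_Z,\tau_Y)$ is a closed meromorphic time-form for $Z$, so $Z$ is already analytically conjugate to some $QX$. What remains is to conjugate $X$ to the polynomial model $X_0$ by a flow $\Phi_{Y_0}^{h}$ of $Y_0$ for some meromorphic parameter $h$ that \emph{a priori} is only formal. The hypothesis $\hat Y \trs Y_0$ together with $[Z_0,\hat Y]=\delta\hat Y$ makes $(Z_0,\hat Y)$ into a non-degenerate affine Lie algebra of ratio $\delta$ in the target chart, whose dual coframe provides a second closed meromorphic time-form $\tau'_{Z_0}$ for $Z_0$, generically distinct from the time-form $\tau_{Z_0}$ associated with $(Z_0,Y_0)$. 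The difference $\tau'_{Z_0}-\tau_{Z_0}$ annihilates $Z_0$, hence is a meromorphic multiple of $\tau_{Y_0}$, and integrating this multiple along the leaves of $Z_0$ produces a convergent candidate $\tilde h$. By formal uniqueness of the transverse normalization, $\tilde h$ must coincide with $h$, delivering an analytic conjugacy between $Z$ and $Z_0$.

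For part~(2), when $\hat Y$ and $Y_0$ are parallel one writes $\hat Y = F Y_0$ with $F$ a non-constant meromorphic function, and the bracket relations $[Z_0,\hat Y]=\delta\hat Y$, $[Z_0,Y_0]=\delta_0 Y_0$ force $Z_0\cdot F = (\delta-\delta_0)F$. Thus $F$ is determined up to a constant by a single quadrature along $Z_0$, and the 3-web carries no transverse information beyond the pair $(Z_0,Y_0)$. The analytic classification of $(Z,Y)$ is then governed by the classical functional moduli: Martinet\textendash{}Ramis-type for resonant singularities and Brjuno-type obstructions for quasi-resonant ones. I would conclude by exhibiting, within the families treated in Sections~\ref{sec:irrat} and~\ref{sec:res-sad}, one-parameter deformations of $Z_0$ that realize non-trivial values of these moduli; since the formal class of $Z_0$ forms a single orbit while the analytic moduli space has positive dimension, the formal conjugacy $\hat\psi$ must diverge for a generic parameter.

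The main obstacle is in part~(1), matching the geometric candidate $\tilde h$ built from the meromorphic web data with the formal output of the normalization algorithm. This reduces to a rigidity statement for the pair $(Z_0,Y_0)$ under formal transformations preserving both $\fol{Z_0}$ and the flow lines of $Y_0$, and it is here that the non-nilpotence hypothesis on the linear part of $Z$ plays its essential role by ruling out flat formal corrections.
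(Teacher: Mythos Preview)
Your approach to part~(1) has a genuine gap: the mechanism you propose for building the convergent transverse datum $\tilde h$ does not solve the right equation. After tangential normalization you must solve the cohomological equation $X\cdot N+\delta_0 N=-R$ (with $\delta_0=0$ in the quasi-resonant case) for the parameter $N$ of the flow $\flw{Y_0}{N}{}$. Your proposal is to exploit the second closed time-form $\tau'_{Z_0}$ coming from the pair $(Z_0,\hat Y)$ and to look at $\tau'_{Z_0}-\tau_{Z_0}=g\,\tau_{Y_0}$. But closedness of both sides forces $Z_0\cdot g=0$ (in the quasi-resonant case $\tau_{Y_0}$ is itself closed), so $g$ is a first integral of $Z_0$ and hence constant in all the cases at hand. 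The object you obtain carries no information about $R$, and ``integrating along the leaves of $Z_0$'' a form that vanishes on $Z_0$ yields zero. There is no way to recover $N$ from this data, and the claimed uniqueness matching with the formal $N$ cannot take place.

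The paper's argument is different and more direct. It pulls back $Y_0$ through the formal transverse map $\mathcal N=\flw{Y_0}{N}{}$ and uses the hypothesis $\hat Y\trs Y_0$ to write $Y_0=aZ_0+b\hat Y$ with $a\in\ww C_{\neq0}$; since $\mathcal N^*Y_0=\frac{1}{1+Y_0\cdot N}Y_0=aZ+(b\circ\mathcal N)Y$ and $Z\trs Y$, one reads off that $Y_0\cdot N$ is convergent. Together with the convergent right-hand side $X\cdot N=-R$ this feeds into the elementary Lemma~\ref{lem:tr-homog-rigid}: if two transverse meromorphic vector fields applied to a formal series are both meromorphic, the series converges. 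This is the key lemma you are missing; the non-degeneracy of the web is used precisely to produce the second derivative $Y_0\cdot N$ in convergent form, not to manufacture an extra time-form.

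Your outline for part~(2) is in the right spirit and matches what the paper does: the statement is ``in general'', and the paper points to the explicit resonant models of Section~\ref{sec:res-sad} (see the remark after Theorem~\ref{thm:res-sad-final}) together with the classical non-Brjuno quasi-resonant examples to exhibit formal normal forms that are not analytically attained.
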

Case~(1) corresponds to the case where the foliation $\fol Z$ admits
a Godbillon\textendash{}Vey sequence of length $1$, \emph{i.e.} there
exists a closed meromorphic 1\textendash{}form $\tau$ such that $\tau(Z)=0$
or, equivalently,\emph{ }a transverse commuting vector field. This
situation somehow relates to results of~\cite{Cervo} or~\cite{Stolo}.
The case~(2) corresponds to the case where $\fol Z$ admits a Godbillon\textendash{}Vey
sequence of length 2, and no sequence of lesser length in general.
This theorem is a consequence of the study carried out in Sections~\ref{sec:irrat}
and~\ref{sec:res-sad}.

The key to this theorem is the following lemma:
\begin{lem}
\label{lem:tr-homog-rigid}Let $X$ and $Y$ be meromorphic vector
fields on a domain $\mathcal{U}\subset\ww C^{2}$ such that $X\trs Y$,
and $N$ be a formal power series based at a point $p\in\mathcal{U}$.
Assume that $X\ddt N$ and $Y\ddt N$ are meromorphic on $\mathcal{U}$.
Then $N$ is a convergent power series.\end{lem}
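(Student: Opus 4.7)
The plan is to reduce the lemma to the classical fact that a formal power series at $p$ is convergent as soon as both its partial derivatives are convergent. The main obstacle will be extracting those partial derivatives from $X\cdot N$ and $Y\cdot N$ using the transversality of $X$ and $Y$, while keeping the formal/meromorphic bookkeeping consistent.

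Pick local holomorphic coordinates $\left(x,y\right)$ centered at $p$. After multiplying $X$ and $Y$ by suitable holomorphic germs (common denominators) I may assume $X=a\pp x+b\pp y$ and $Y=c\pp x+d\pp y$ with $a,b,c,d\in\germ{x,y}$; this rescaling preserves the meromorphy of $X\cdot N$ and $Y\cdot N$ on $\mathcal{U}$. The hypothesis $X\trs Y$ forces $\Delta:=ad-bc\in\germ{x,y}$ to be non\textendash{}zero. Cramer's rule applied to
\begin{eqnarray*}
X\cdot N & = & a\,\partial_{x}N+b\,\partial_{y}N,\\
Y\cdot N & = & c\,\partial_{x}N+d\,\partial_{y}N,
\end{eqnarray*}
gives $\Delta\,\partial_{x}N=d\left(X\cdot N\right)-b\left(Y\cdot N\right)$ and a symmetric identity for $\partial_{y}N$, both holding in $\frml{x,y}$.

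Each right\textendash{}hand side is a germ of a meromorphic function at $p$. Consequently $\partial_{x}N$, \emph{a priori} only a formal power series, coincides inside $\mathrm{Frac}\left(\frml{x,y}\right)$ with a meromorphic germ of the form $h/\Delta\in\mero{x,y}$. A standard fact---the completion $\frml{x,y}$ is faithfully flat over the regular local ring $\germ{x,y}$---yields the equality $\mero{x,y}\cap\frml{x,y}=\germ{x,y}$ taken inside $\mathrm{Frac}\left(\frml{x,y}\right)$. Therefore $\partial_{x}N$ is convergent at $p$, and likewise for $\partial_{y}N$.

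A two\textendash{}step integration concludes. Setting $F\left(x,y\right):=\int_{0}^{x}\partial_{x}N\left(s,y\right)\,\mathrm{d}s$, which is convergent near $p$, the difference $N-F$ is a formal series depending only on $y$ whose derivative $\partial_{y}\left(N-F\right)=\partial_{y}N-\partial_{y}F$ is convergent; hence $N-F$ is a convergent series in $y$ alone, and $N$ itself converges on a neighborhood of $p$. The only delicate step is the passage from the formal identity $\Delta\,\partial_{x}N=h$ to the convergence of $\partial_{x}N$, where the regularity of $\germ{x,y}$ plays the essential role.
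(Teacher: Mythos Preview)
Your proof is correct and follows essentially the same route as the paper's: the paper phrases it more compactly by introducing the dual basis $\left(\tau_{X},\tau_{Y}\right)$ and writing $\dd N=\left(X\cdot N\right)\tau_{X}+\left(Y\cdot N\right)\tau_{Y}$, which is meromorphic, but unpacking this identity in coordinates is exactly your Cramer's rule computation. Your version is more explicit about the passage from ``$\partial_{x}N$ is formally equal to a meromorphic germ'' to ``$\partial_{x}N$ converges,'' which the paper leaves implicit.
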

\begin{proof}
Let $\left(\tau_{X},\tau_{Y}\right)$ be the dual basis of $\left(X,Y\right)$.
Then $\dd N=\left(X\ddt N\right)\tau_{X}+\left(Y\ddt N\right)\tau_{Y}$
is meromorphic on $\mathcal{U}$. Hence $N$ is a convergent power
series at $p$.
\end{proof}
Surprisingly enough this elementary lemma really is the point of the
result of transverse rigidity we present here.

\subsection{Galois\textendash{}Malgrange groupoid of meromorphic vector fields
$Z$}

~

The last part of this paper, Section~\ref{sec:galois}, introduces
briefly the definition of the Galois\textendash{}Malgrange groupoid
for \emph{meromorphic} vector fields given in~\cite{Malg,MalgShort}
or~\cite{Casa}. It is particularly a subgroupoid of the sheaf $\aut Z$
of germs of a biholomorphic symmetry of $Z$ at points near $\left(0,0\right)$
(perhaps outside an analytic hypersurface). We identify the groupoid
by considering all transverse structures of $Z$
\begin{eqnarray*}
\tx{TS}\left(Z\right) & := & \left\{ \left(\delta,Y\right)\,:\, Y\trs Z,\,\left[Z,Y\right]=\delta Y\right\} \,.
\end{eqnarray*}
The set of all possible such $\delta$ is a $\ww Z$\textendash{}lattice
which classifies most of the dynamical type of $Z$ (Section~\ref{sec:delta-lattice}).
\begin{namedthm}[Integrability Theorem]
Assume $Z$ is a germ of a meromorphic vector field which admits
a non\textendash{}degenerate affine Lie algebra. For $\left(\delta,Y\right)\in\tx{TS}\left(Z\right)$
call $\aut{\lie{Z,Y}}$ the sheaf of germs of biholomorphic symmetries
of the Lie Algebra $\lie{Z,Y}$. The Galois\textendash{}Malgrange
groupoid $\gal Z$ of $Z$ is then given by
\begin{eqnarray*}
\gal Z & = & \aut Z\cap\bigcap_{\left(\delta,Y\right)\in\tx{TS}\left(Z\right)}\aut{\lie{Z,Y}}\,.
\end{eqnarray*}
\end{namedthm}
\begin{rem*}
We also give explicit equations of $\gal Z$ in canonical forms, expressed
as linear differential equations in the Lie derivatives $Z\cdot$
and $Y\cdot$ acting on the sheaf of germs of a holomorphic function.
\end{rem*}

\subsection{Notations}

~

Let us introduce some notations and conventions.
\begin{itemize}
\item $\ww N:=\left\{ 1,2,\ldots\right\} =\ww Z_{>0}$.
\item The spaces $\germ{x_{1},\cdots,x_{n}}$ and $\mero{x_{1},\cdots,x_{n}}$
stand respectively for the spaces of germs at $\left(0,\cdots,0\right)\in\ww C^{n}$
of a holomorphic function and of a meromorphic function.
\item We write $W\ddt F$ for the action of a vector field $W$ as a derivation
on the $\ww C$\textendash{}algebra $\frml{x,y}$ of formal power
series $F$ at $\left(0,0\right)$. Its action is extended component\textendash{}wise
to vector functions.
\item We write $\flw Wt{}$ as the 1\textendash{}parameter (pseudo\textendash{})group
associated to a vector field $W$ (its flow). In the sequel the notation
$\flw WF{}$, where $F$ is a function (or a formal power series),
will stand for the (formal) map $\left(x,y\right)\mapsto\flw W{F\left(x,y\right)}{\left(x,y\right)}$.
\end{itemize}
\tableofcontents{}

\section{\label{sec:Prelim}Preliminary results}

In this section $Z$ is a germ of a holomorphic vector field fear
$\left(0,0\right)$. If $Z\left(0,0\right)=0$ we define $\left\{ \lambda_{1},\lambda_{2}\right\} $
the spectrum of the linear part of $Z$ at $\left(0,0\right)$. When
$Z$ is not nilpotent we label the eigenvalues in such a way that
$\lambda_{2}\neq0$ and set $\lambda:=\nf{\lambda_{1}}{\lambda_{2}}$.
The vector field $Z$ admits a \textbf{reduced} singularity at $\left(0,0\right)$
when $\lambda\notin\ww Q_{>0}$.

\subsection{\label{sec:mero-first-integ-sepx}Separatrices and first\textendash{}integrals}

~

Let us recall briefly basic facts about separatrices.
\begin{defn}
\label{def:separatrix}A \textbf{separatrix} of $Z$ at $\left(0,0\right)$
is a curve $\left\{ f=0\right\} \ni\left(0,0\right)$ where $f$ is
a germ of a (non\textendash{}constant) holomorphic function at $\left(0,0\right)$
such that there exists a germ of a holomorphic function $K$ (called
the cofactor) satisfying 
\begin{eqnarray*}
Z\ddt f & = & Kf\,.
\end{eqnarray*}
By abuse of language we may also say that $f$ is a separatrix of
$Z$.\end{defn}
\begin{itemize}
\item Equivalently this means the curve $\left\{ f=0\right\} $ is tangent
to $Z$, and therefore the adherence of a finite union of leaves of
$\fol Z$.
\item A classical result of \noun{C.~Camacho} and \noun{P.~Sad} asserts
that every vector field of $\left(\ww C^{2},0\right)$ admits at least
one separatrix~\cite{CamaSad}.
\item The level curves of any non\textendash{}constant first\textendash{}integral
$h$ of $Z$ are separatrices, since $Z\ddt h=0$.
\item In the case of a non\textendash{}nilpotent singularity, if $\lambda$
is not a rational number or $\fol Z$ is not formally linearizable
then $Z$ admits no non\textendash{}constant (formal or convergent)
meromorphic first\textendash{}integral. Hence if $Z\ddt a=0$ for
$a$ meromorphic then $a\in\ww C$.\end{itemize}
\begin{lem}
\label{lem:d-basic}Let $Z$ be a germ of a singular vector field
at $\left(0,0\right)$. Then any non\textendash{}zero germ of a meromorphic
function $F$ at $\left(0,0\right)$ such that $Z\cdot F=KF$ with
$K$ holomorphic is of the form 
\begin{eqnarray*}
F & = & \beta\prod_{j}f_{j}^{\alpha_{j}}\,\,\,\,\,\,\,\,,\,\,\alpha_{j}\in\ww Z
\end{eqnarray*}
where $\beta\neq0$ is some germ of a holomorphic function at $\left(0,0\right)$
which is not a separatrix of $Z$ and the $f_{j}$'s form a finite
family of distinct irreducible branches of a separatrix of $Z$.\end{lem}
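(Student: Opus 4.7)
The key observation is that the condition $Z\cdot F=KF$ with $K$ holomorphic is multiplicatively well--behaved, and combines naturally with the unique factorization in the Noetherian ring $\germ{x,y}$. The plan is therefore (i) to reduce from a meromorphic Darboux function to a pair of holomorphic Darboux functions using coprimality, then (ii) to argue that each irreducible factor of a holomorphic Darboux function is itself a separatrix.

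First I would write $F=G/H$ with $G,H\in\germ{x,y}$ coprime in the UFD $\germ{x,y}$. Expanding $Z\cdot F=KF$ and clearing denominators yields
\begin{eqnarray*}
H\,Z\ddt G-G\,Z\ddt H & = & KGH\,.
\end{eqnarray*}
Hence $H$ divides $G\,Z\ddt H$ in $\germ{x,y}$, and coprimality of $G$ and $H$ forces $H\mid Z\ddt H$, i.e.\ $Z\ddt H=K_{H}H$ for some holomorphic $K_{H}$. Substituting back shows the same holds for $G$: $Z\ddt G=K_{G}G$. Thus both numerator and denominator are holomorphic Darboux functions for $Z$.

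Next, factoring $G=u\prod_{i}g_{i}^{m_{i}}$ with $u$ a unit and the $g_{i}$ pairwise non--associate irreducibles, I would isolate one factor by writing $G=g_{1}^{m_{1}}G_{1}$ with $g_{1}\nmid G_{1}$. A direct Leibniz computation gives
\begin{eqnarray*}
m_{1}(Z\ddt g_{1})G_{1} & = & g_{1}\bigl(K_{G}G_{1}-Z\ddt G_{1}\bigr)\,.
\end{eqnarray*}
Since $g_{1}$ is prime in the UFD $\germ{x,y}$ and does not divide $G_{1}$, it must divide $Z\ddt g_{1}$, so $g_{1}$ is a separatrix of $Z$. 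The same applies to every irreducible factor of $G$ and, symmetrically, of $H$. As for the unit $u$, it is automatically a Darboux function with cofactor $Z\ddt u/u$, and it is not a separatrix because $u(0,0)\neq 0$; similarly for the unit appearing in the factorization of $H$.

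Finally I would collect the irreducible separatrix factors of $G$ and $H$, grouping associate ones and taking $\alpha_{j}\in\ww Z$ to be the (signed) total exponent of each $f_{j}$ in $G/H$; the remaining unit factor $\beta\in\germ{x,y}^{\times}$ is not a separatrix of $Z$, and the announced decomposition $F=\beta\prod_{j}f_{j}^{\alpha_{j}}$ follows. The only point requiring any care is the divisibility argument in the paragraph above; everything else is formal manipulation in the UFD $\germ{x,y}$.
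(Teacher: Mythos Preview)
Your argument is correct and follows essentially the same route as the paper's proof: write $F$ as a coprime quotient, use the relation $H\,Z\ddt G - G\,Z\ddt H = KGH$ together with coprimality in the UFD $\germ{x,y}$ to see that both numerator and denominator are Darboux, and then factor. The paper is terser---it stops after observing that $p$ (your $G$) is a separatrix and says ``the conclusion follows''---whereas you spell out the Leibniz step showing each irreducible factor is itself a separatrix; this extra care is welcome but not a different idea.
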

\begin{proof}
Write $F=\frac{p}{q}$ with $p,\, q$ coprime germs at $\left(0,0\right)$
of a holomorphic function. If $p$ or $q$ does not vanish at $\left(0,0\right)$
the result is trivial. In the other case from 
\begin{eqnarray*}
qZ\cdot p & = & \left(Kq+Z\cdot q\right)p
\end{eqnarray*}
we deduce that $Z\cdot p$ vanishes along the curve $\left\{ p=0\right\} $
and $p$ is a separatrix. The conclusion follows.\end{proof}
\begin{lem}
\label{lem:poles_Y}Let $\lie{Z,Y}$ be a Lie algebra of ratio $\delta$
with $Y$ not holomorphic. The polar locus $\left\{ \frak{p}=0\right\} $
of $Y$ is a separatrix of $Z$.\end{lem}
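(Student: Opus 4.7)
The plan is to clear denominators in the commutation relation and then exploit unique factorization in $\germ{x,y}$.

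First I would write $Y = W/\mathfrak{p}$ where $\mathfrak{p}$ is a holomorphic equation for the polar locus and $W = W_{1}\pp x + W_{2}\pp y$ is a holomorphic vector field chosen so that no irreducible factor of $\mathfrak{p}$ divides both $W_{1}$ and $W_{2}$. Such a presentation is obtained by expressing each meromorphic component of $Y$ as an irreducible fraction, reducing to a common denominator, and then cancelling any remaining common factor with the $\gcd$ of the numerators.

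Next I would apply the Leibniz rule $\left[Z,gW\right] = g\left[Z,W\right] + \left(Z\ddt g\right)W$ with $g = 1/\mathfrak{p}$. Combined with $\left[Z,Y\right] = \delta Y$ and multiplication by $\mathfrak{p}^{2}$, this produces the \emph{holomorphic} vector field identity
\begin{eqnarray*}
\mathfrak{p}\left(\left[Z,W\right] - \delta W\right) & = & \left(Z\ddt\mathfrak{p}\right)W\,.
\end{eqnarray*}
Read component by component, it asserts $\mathfrak{p}\mid\left(Z\ddt\mathfrak{p}\right)W_{j}$ in $\germ{x,y}$ for $j\in\left\{ 1,2\right\} $.

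The arithmetic heart of the proof is then: fix an irreducible factor $f_{i}$ appearing in $\mathfrak{p}$ with multiplicity $k_{i}\geq1$. Since $f_{i}^{k_{i}}\mid\left(Z\ddt\mathfrak{p}\right)W_{j}$ for each $j$ and, by the coprimality built into the presentation, there is some $j$ for which $f_{i}\nmid W_{j}$, we conclude $f_{i}^{k_{i}}\mid Z\ddt\mathfrak{p}$. Expanding $Z\ddt\mathfrak{p}$ via Leibniz, every term but $k_{i}f_{i}^{k_{i}-1}\left(Z\ddt f_{i}\right)\prod_{l\neq i}f_{l}^{k_{l}}$ is already divisible by $f_{i}^{k_{i}}$, so this divisibility forces $f_{i}\mid Z\ddt f_{i}$; writing $Z\ddt f_{i} = K_{i}f_{i}$ with $K_{i}\in\germ{x,y}$ and summing over $i$ yields $Z\ddt\mathfrak{p} = \left(\sum_{i}k_{i}K_{i}\right)\mathfrak{p}$, which is precisely the statement that $\left\{ \mathfrak{p}=0\right\} $ is a separatrix of $Z$ in the sense of Definition~\ref{def:separatrix}.

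The main obstacle is purely bookkeeping: one must set up the presentation $Y=W/\mathfrak{p}$ so that the coprimality argument applies to every irreducible factor of $\mathfrak{p}$ simultaneously, and keep track of multiplicities when distributing $Z\ddt$ over $\prod_{l}f_{l}^{k_{l}}$. Beyond that, everything reduces to a direct use of the UFD structure of $\germ{x,y}$, and the value of $\delta$ plays no role since $\delta W$ is holomorphic.
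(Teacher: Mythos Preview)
Your argument is correct and follows the same route as the paper: clear denominators in $\left[Z,Y\right]=\delta Y$ to obtain the holomorphic identity $\mathfrak{p}\left(\left[Z,W\right]-\delta W\right)=\left(Z\ddt\mathfrak{p}\right)W$ (this is the paper's formula~\eqref{eq:commut_holo}), then use coprimality of $\mathfrak{p}$ and $W$ in the UFD $\germ{x,y}$ to conclude $\mathfrak{p}\mid Z\ddt\mathfrak{p}$. Your Leibniz detour through the individual irreducible factors $f_i$ is not needed --- once you have $f_i^{k_i}\mid Z\ddt\mathfrak{p}$ for every $i$ you already have $\mathfrak{p}\mid Z\ddt\mathfrak{p}$ directly --- but it does no harm and yields the slightly finer statement that each branch of the polar locus is itself a separatrix.
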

\begin{proof}
The proof is the same as the previous lemma's, this time using the
equality
\begin{eqnarray}
\frak{p}\left[Z,W\right] & = & \left(\delta\frak{p}+Z\cdot\frak{p}\right)W\label{eq:commut_holo}
\end{eqnarray}
where $W=\frak{p}Y$ for coprime and holomorphic $\frak{p}$, $W$.
\end{proof}

\subsection{Non\textendash{}degenerate algebras}

~

We assume here that the locus of tangency $\left\{ \det\left(Z,Y\right)=0\right\} $
is of codimension at least 1, that is $Z\trs Y$. Let us first show
that the foliation $\fol Z$ admits a Godbillon\textendash{}Vey sequence
of length $1$ or 2:
\begin{lem}
\label{lem:godbillon}Denote by $\left(\tau_{Z},\tau_{Y}\right)$
the dual basis of $\left(Z,Y\right)$. Then $\dd{\tau_{Z}}=0$ and
$\dd{\tau_{Y}}=\delta\tau_{Z}\wedge\tau_{Y}$.\end{lem}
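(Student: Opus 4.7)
The plan is a direct application of Cartan's formula for the exterior differential of a $1$-form on a pair of vector fields,
\[
\dd{\omega}(Z,Y)\;=\;Z\ddt\omega(Y)\,-\,Y\ddt\omega(Z)\,-\,\omega([Z,Y]).
\]
The non-degeneracy hypothesis $Z\trs Y$ guarantees that $\det(Z,Y)$ is a meromorphic function which is not identically zero, so the dual basis $(\tau_{Z},\tau_{Y})$ is well defined as a pair of meromorphic $1$-forms on the complement of the tangency locus, characterised by the duality relations $\tau_{Z}(Z)=\tau_{Y}(Y)=1$ and $\tau_{Z}(Y)=\tau_{Y}(Z)=0$. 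Since the ambient surface has dimension $2$, a meromorphic $2$-form is determined by its value on any meromorphic frame; it is therefore enough to test both claimed identities after evaluation on $(Z,Y)$.

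For $\tau_{Z}$, the two derivation terms in Cartan's formula kill the constants $\tau_{Z}(Y)=0$ and $\tau_{Z}(Z)=1$, while the bracket term contributes $\tau_{Z}([Z,Y])=\delta\tau_{Z}(Y)=0$; hence $\dd{\tau_{Z}}(Z,Y)=0$ and so $\dd{\tau_{Z}}=0$. For $\tau_{Y}$ the derivation terms again vanish because $\tau_{Y}(Y)=1$ and $\tau_{Y}(Z)=0$ are constants, leaving $-\tau_{Y}([Z,Y])=-\delta\tau_{Y}(Y)=-\delta$ as the only surviving contribution; comparing this with $(\tau_{Z}\wedge\tau_{Y})(Z,Y)=\tau_{Z}(Z)\tau_{Y}(Y)-\tau_{Z}(Y)\tau_{Y}(Z)=1$ identifies $\dd{\tau_{Y}}$ as the prescribed scalar multiple of $\tau_{Z}\wedge\tau_{Y}$ (with the sign dictated by the convention for the wedge product).

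There is no substantive obstacle: the lemma is a one-line consequence of Cartan's formula, and the only point worth flagging is that the identification of a meromorphic $2$-form from its value on the single frame $(Z,Y)$ relies precisely on the transversality $Z\trs Y$, which is what makes the computation meaningful off a codimension-$1$ locus. The same pattern will be reused throughout the paper whenever the Godbillon--Vey type identities of $\fol Z$ need to be manipulated, so it is worth presenting it in this compact, frame-based form.
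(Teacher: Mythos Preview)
Your proof is correct and essentially identical to the paper's own argument: both apply Cartan's formula $\dd{\omega}(Z,Y)=Z\ddt\omega(Y)-Y\ddt\omega(Z)-\omega([Z,Y])$ to $\tau_Z$ and $\tau_Y$ and then identify the resulting $2$-forms by their value on the frame $(Z,Y)$. Your explicit remark that this identification is legitimate precisely because $Z\trs Y$ makes $(Z,Y)$ a meromorphic frame is a small clarification the paper leaves implicit, and your flag about the sign convention for $\wedge$ is apt (the paper itself writes $\delta\tau_Y\wedge\tau_Z$ in the introduction but $\delta\tau_Z\wedge\tau_Y$ in the lemma).
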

\begin{proof}
We use the formula
\begin{eqnarray*}
\dd{\tau}(Z,Y) & = & Z\ddt\tau(Y)-Y\ddt\tau(Z)-\tau\left(\left[Z,Y\right]\right)
\end{eqnarray*}
to obtain:
\begin{eqnarray*}
\dd{\tau_{Z}}(Z,Y) & = & Z\ddt0-Y\ddt1-\delta\tau_{Z}(Y)\\
 & = & 0\,.
\end{eqnarray*}
Using the formula once more we conclude: 
\begin{eqnarray*}
\dd{\tau_{Y}}(Z,Y) & = & -\delta\\
 & = & \delta\tau_{Y}\wedge\tau_{Z}\left(Z,Y\right)\,.
\end{eqnarray*}

\end{proof}
The polar locus of $\tau_{Z}$ and $\tau_{Y}$ is contained in the
union of the polar locus of $Y$ and the locus of tangency $\left\{ \det\left(Z,Y\right)=0\right\} $
because of Cramer's formula. Taking Lemma~\ref{lem:poles_Y} into
account we obtain the
\begin{lem}
\label{lem:pole-is-sep}The locus of tangency $\left\{ \det\left(Z,Y\right)=0\right\} $
is a union of separatrices of $Z$. Consequently the polar locus of
$\left(\tau_{Z},\tau_{Y}\right)$ also is.\end{lem}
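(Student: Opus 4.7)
My plan is to reduce both assertions to a single holomorphic function to which Lemma~\ref{lem:d-basic} applies. First I write $Y=W/\frak{p}$ with $W$ a holomorphic vector field and $\frak{p}$ a holomorphic function coprime with the components of $W$, so that $\{\frak{p}=0\}$ is exactly the polar locus of $Y$; by Lemma~\ref{lem:poles_Y} this is already a separatrix of $Z$. Expanding the relation $[Z,Y]=\delta Y$ gives $[Z,W]=\mu W$ with $\mu:=\delta+(Z\cdot\frak{p})/\frak{p}$, and identity~(\ref{eq:commut_holo}) combined with the coprimality of $\frak{p}$ and $W$ shows that $\mu$ is in fact holomorphic.

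The heart of the argument is the identity
\begin{eqnarray*}
Z\cdot T & = & \left(\tx{div}(Z)+\mu\right) T,
\end{eqnarray*}
where $T:=\det(Z,W)$. It follows from a short Cartan calculation: rewrite $T = i_W i_Z(\dd x\wedge \dd y)$, use $\mathcal{L}_Z(\dd x\wedge \dd y)=\tx{div}(Z)\,\dd x\wedge \dd y$, and apply the commutation rule $[\mathcal{L}_Z,i_W]=i_{[Z,W]}=\mu\, i_W$ to combine the two contributions. Since $\tx{div}(Z)+\mu$ is holomorphic, Lemma~\ref{lem:d-basic} applies to $T$ and yields a factorization $T=\beta\prod_j f_j^{\alpha_j}$ with $\beta$ a unit and each $f_j$ a separatrix of $Z$; the holomorphy of $T$ forces the $\alpha_j$'s to be positive, so $\{T=0\}$ is a finite union of separatrix branches.

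To conclude, I identify both loci as subsets of $\{T=0\}$. Since $T=\frak{p}\det(Z,Y)$, every irreducible component of the tangency locus $\{\det(Z,Y)=0\}$ is a component of $\{T=0\}$, hence a separatrix. For the dual basis, writing $Z=A\pp x+B\pp y$ and $W=W_1\pp x+W_2\pp y$, Cramer's rule gives
\begin{eqnarray*}
\tau_Z & = & \frac{W_2\,\dd x - W_1\,\dd y}{T},\\
\tau_Y & = & \frac{\frak{p}\left(A\,\dd y - B\,\dd x\right)}{T},
\end{eqnarray*}
with holomorphic numerators, so the polar divisors of $\tau_Z$ and $\tau_Y$ lie inside $\{T=0\}$ and consist of separatrices. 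The only step requiring genuine work is the identity $Z\cdot T=(\tx{div}(Z)+\mu)T$; everything else is bookkeeping that reduces to Lemmas~\ref{lem:d-basic} and~\ref{lem:poles_Y}.
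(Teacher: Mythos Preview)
Your proof is correct and follows essentially the same route as the paper: both establish that the determinant satisfies an identity $Z\cdot(\det)=K\cdot(\det)$ with holomorphic cofactor $K$, then invoke Lemma~\ref{lem:d-basic}. The paper does this in one line directly on the meromorphic $\Delta=\det(Z,Y)$, obtaining $Z\cdot\Delta=(\delta+\tx{div}\,Z)\Delta$, whereas you pass to the holomorphic lift $T=\frak{p}\,\Delta$ and derive the analogous identity via Cartan calculus; the Cramer argument for the polar locus of $(\tau_Z,\tau_Y)$ is the same in both.
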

\begin{proof}
Write $Z=A\pp x+B\pp y$ and $Y=C\pp x+D\pp y$ so that $\Delta:=AD-BC=\det\left(Z,Y\right)$.
Because $\left[Z,Y\right]=\delta Y$ we obtain immediately that
\begin{eqnarray*}
Z\ddt\Delta & = & \left(\delta+\pp xA+\pp yB\right)\Delta.
\end{eqnarray*}

\end{proof}

\subsection{Changes of coordinates}

~

We will mostly use two kinds of changes of coordinates to normalize
a non\textendash{}degenerate Lie algebra $\lie{Z,Y}$ of ratio $\delta$,
formed with two formal power series $T,\, N\in\frml{x,y}$ with $T(0,0)=N(0,0)=0$. 
\begin{itemize}
\item The first type of conjugacy will be called \textbf{tangential} since
it is constructed with the flow $\flw Zt{}$ of $Z$:
\begin{eqnarray*}
\mathcal{T}(x,y) & := & \flw Z{T(x,y)}{(x,y)}\,\,.
\end{eqnarray*}

\item The second type, obtained with the flow of the vector field $Y_{0}$,
is called \textbf{transversal}:
\begin{eqnarray*}
\mathcal{N}(x,y) & := & \flw Y{N(x,y)}{(x,y)}.
\end{eqnarray*}

\end{itemize}
Since the flows $\flw Yt{}$ and $\flw Zt{}$ are convergent power
series, these changes of coordinates are convergent if $T$ and $N$
are. Let us explain how they act by conjugacy on $\left(Z,Y\right)$.
\begin{prop}
\label{pro:chg_coord} Assume that $\left[Z,Y\right]=DY$ with $Y\ddt D=0$
and $Y$ holomorphic, not necessarily transverse.
\begin{enumerate}
\item Let $W$ be a holomorphic vector field near $\left(0,0\right)$ and
$F\in\frml{x,y}$ such that $F\left(0,0\right)=0$. Then $\flw WF{}$
is a formal change of coordinates if, and only if, $\left(W\ddt F\right)\left(0,0\right)\neq-1$.
It fixes $\left(0,0\right)$ and when $W$ is singular it is tangent
to the identity. 
\item $\mathcal{N}^{*}Z=Z-\left(Z\ddt N+DN\right)\mathcal{N}^{*}Y$ where
$\mathcal{N}^{*}Y=\frac{1}{1+Y\ddt N}Y$.
\item $\mathcal{T}^{*}Z=\frac{1}{1+Z\ddt T}Z$. If moreover $D\in\ww C$
and if $Z\trs Y$ then $\mathcal{T}^{*}Y=e^{DT}\left(Y-\left(Y\ddt T\right)\mathcal{T}^{*}Z\right)$.
\end{enumerate}
\end{prop}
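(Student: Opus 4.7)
The plan is to reduce all three parts to two computations: a chain-rule formula for the differential of a flow-based change of coordinates, and the ODE satisfied by the pullback of a vector field along a flow.

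For Part~(1), set $\Phi := \flw WF{}$ so that $\Phi(p) = \flw W{F(p)}p$. Since $F(0,0)=0$ the time-$0$ flow is the identity, hence $\Phi(0,0)=(0,0)$. Viewing $\Phi$ as the composition of $p\mapsto(F(p),p)$ with $(t,q)\mapsto\flw Wtq$ and applying the product rule yields
\[
\dd\Phi_{p}(v) \;=\; W(\Phi(p))\,\dd F_{p}(v) \;+\; \dd\bigl(\flw W{F(p)}{}\bigr)_{p}(v).
\]
At $p=(0,0)$ the second term reduces to the identity, giving $\dd\Phi_{(0,0)} = \mathrm{Id} + W(0,0)\otimes \dd F_{(0,0)}$. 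This rank-one perturbation of the identity has determinant $1+(W\ddt F)(0,0)$ by the matrix-determinant lemma, which yields both the invertibility criterion and, when $W(0,0)=0$, the assertion that $\Phi$ is tangent to the identity.

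For Parts~(2) and~(3) the same chain rule gives
\[
\dd\mathcal{N}_p(v) = Y(\mathcal{N}(p))\,\dd N_{p}(v) + \dd\bigl(\flw Y{N(p)}{}\bigr)_{p}(v), \qquad \dd\mathcal{T}_p(v) = Z(\mathcal{T}(p))\,\dd T_{p}(v) + \dd\bigl(\flw Z{T(p)}{}\bigr)_{p}(v).
\]
The auxiliary ingredient is the action of each flow on the partner vector field, computed by solving the ODE $\frac{d}{dt}(\flw Wt{})^{*}V = (\flw Wt{})^{*}[W,V]$. Using $(\flw Yt{})^{*}Y=Y$ and, via $Y\ddt D=0$, $(\flw Yt{})^{*}D=D$, one integrates to $(\flw Yt{})^{*}Z = Z-tDY$; using the constancy $D\in\ww C$ to reduce to a linear ODE with constant coefficient, one integrates to $(\flw Zt{})^{*}Y = e^{Dt}Y$. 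Passing from these pullbacks to the differentials of the flows (a sign flip in~$t$) and specialising to $t=N(p)$ or $t=T(p)$, the ans\"atze $w=Y(p)/(1+(Y\ddt N)(p))$ for $\mathcal{N}^{*}Y$ and $w=Z(p)/(1+(Z\ddt T)(p))$ for $\mathcal{T}^{*}Z$ verify the defining identities $\dd\mathcal{N}_{p}(w)=Y(\mathcal{N}(p))$ and $\dd\mathcal{T}_{p}(w)=Z(\mathcal{T}(p))$ directly. For $\mathcal{N}^{*}Z$ and $\mathcal{T}^{*}Y$ the transversality $Z\trs Y$ permits the ansatz $w=\alpha Z(p)+\beta Y(p)$, and the defining identity becomes a $2\times 2$ linear system in $(\alpha,\beta)$ whose explicit solution is exactly the stated formula.

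The main obstacle is bookkeeping rather than conceptual. One must cleanly distinguish between the pullback of a vector field along a flow and the pushforward of a tangent vector (the two differ by the sign of~$t$), and keep separate the roles of the two distinct hypotheses on~$D$. The weaker $Y\ddt D=0$ of Part~(2) is used only to identify $D(\mathcal{N}(p))$ with $D(p)$, so that the coefficient $DN$ is unambiguously placed after substitution; the stronger $D\in\ww C$ of Part~(3) is the critical hypothesis that linearises the pullback ODE and produces the exponential factor $e^{DT}$.
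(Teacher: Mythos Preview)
Your approach is correct and takes a genuinely different route from the paper's. The paper expands the flow as the explicit exponential series $g(x,y,t)=\sum_{n\ge 0}\frac{t^{n}}{n!}\,W\!\cdot^{n}(x,y)$ and obtains the key identities $Z\cdot g=Z\circ g+tD\,Y\circ g$ and $Y\cdot g=e^{-\delta t}\,Y\circ g$ by proving the commutator relations $Z\cdot Y\cdot^{n}=Y\cdot^{n}Z\cdot+nD\,Y\cdot^{n}$ and $Y\cdot Z\cdot^{n}=\sum_{p\le n}\binom{n}{p}(-\delta)^{n-p}Z\cdot^{p}Y\cdot$ inductively; the conjugacy equation is then written component\textendash{}wise as $Z\circ g\circ\pi=D\mathcal{N}(\tilde Z)$ and solved for the coefficient $R$ in the ansatz $\tilde Z=Z+RY$. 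You instead integrate the ODE $\frac{d}{dt}(\Phi_{W}^{t})^{*}V=(\Phi_{W}^{t})^{*}[W,V]$ to get the closed forms $(\Phi_{Y}^{t})^{*}Z=Z-tDY$ and $(\Phi_{Z}^{t})^{*}Y=e^{Dt}Y$ directly, and then read off the answer from the chain\textendash{}rule formula for $d\mathcal{N}_{p}$ and $d\mathcal{T}_{p}$. The two computations encode the same information (your flow identities are the generating functions of the paper's iterated\textendash{}bracket identities), but yours is more intrinsic and avoids the series bookkeeping; the paper's is more explicitly at the level of formal power series, which is natural since $N,T$ are only formal.

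One small point to fix: you invoke the transversality $Z\pitchfork Y$ to justify the ansatz $w=\alpha Z(p)+\beta Y(p)$ for $\mathcal{N}^{*}Z$, but Part~(2) of the proposition does \emph{not} assume transversality (only Part~(3) does, for $\mathcal{T}^{*}Y$). In fact your verification goes through regardless: substituting $w=Z(p)+\beta Y(p)$ into your chain\textendash{}rule formula yields $d\mathcal{N}_{p}(w)=Z(\mathcal{N}(p))+\bigl[(Z\cdot N)(p)+D(p)N(p)+\beta(1+(Y\cdot N)(p))\bigr]\,Y(\mathcal{N}(p))$, and choosing $\beta$ to kill the bracket gives the desired identity whether or not $Z$ and $Y$ are independent at $\mathcal{N}(p)$; uniqueness of $w$ then follows from the invertibility of $d\mathcal{N}_{p}$ established in Part~(1). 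So the argument is sound, but the appeal to transversality in Part~(2) should be dropped. The paper uses the same ansatz $\tilde Z=Z+RY$ and likewise needs transversality only in~(3).
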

\begin{rem}
If $W$ is singular at $\left(0,0\right)$ then $\left(W\ddt F\right)(0,0)=0$.\end{rem}
\begin{proof}
~
\begin{enumerate}
\item Define the convergent vector power series at $\left(0,0,0\right)$
\begin{eqnarray*}
g(x,y,t) & := & \flw Wt{(x,y)}
\end{eqnarray*}
which can be written
\begin{eqnarray*}
g(x,y,t) & = & \sum_{n=0}^{+\infty}\frac{t^{n}}{n!}W\ddt^{n}\left(x,y\right)
\end{eqnarray*}
where $W\ddt^{n+1}:=W\ddt W\ddt^{n}$ and $W\ddt^{0}:=Id$. The object
$g(x,y,F(x,y))$ is then a formal power series since $F(0,0)=0$.
When $W$ is singular a simple computation ensures that $\flw WF{}$
is tangent to identity and thus is a formal change of coordinates.
The fact that the Jacobian of $\flw WF{}$ at $\left(0,0\right)$
is $\left(1+\left(W\ddt F\right)\left(0,0\right)\right)$ is proved
in the next point.
\item Consider the vector formal power series
\begin{eqnarray*}
\pi\,:\,\left(x,y\right) & \mapsto & \left(x,y,N(x,y)\right)\,,
\end{eqnarray*}
as well as the convergent power series 
\begin{eqnarray*}
g(x,y,t): & = & \flw Yt{(x,y)}.
\end{eqnarray*}
Writing $\mathcal{N}=g\circ\pi$ we obtain  : 
\begin{eqnarray*}
D\mathcal{N} & = & DgD\pi\\
 & = & \left(Dg\left[\begin{array}{cc}
1 & 0\\
0 & 1\\
\frac{\partial N}{\partial x} & \frac{\partial N}{\partial y}
\end{array}\right]\right)\circ\pi\,.
\end{eqnarray*}
The determinant of this matrix is
\begin{eqnarray*}
\det\left(D\mathcal{N}\right) & = & \det\left(\left[\begin{array}{cc}
\frac{\partial g_{x}}{\partial x} & \frac{\partial g_{y}}{\partial x}\\
\frac{\partial g_{x}}{\partial x} & \frac{\partial g_{y}}{\partial x}
\end{array}\right]\circ\pi\right)+\det\left(\left[\begin{array}{cc}
\frac{\partial g_{x}}{\partial t} & \frac{\partial g_{y}}{\partial t}\\
\frac{\partial g_{x}}{\partial y} & \frac{\partial g_{y}}{\partial y}
\end{array}\right]\circ\pi\right)\frac{\partial N}{\partial x}\\
 &  & +\det\left(\left[\begin{array}{cc}
\frac{\partial g_{x}}{\partial x} & \frac{\partial g_{y}}{\partial x}\\
\frac{\partial g_{x}}{\partial t} & \frac{\partial g_{y}}{\partial t}
\end{array}\right]\circ\pi\right)\frac{\partial N}{\partial y}
\end{eqnarray*}
which we evaluate at $\left(0,0\right)$  :
\begin{eqnarray*}
\det\left(D\mathcal{N}\left(0,0\right)\right) & = & 1+\left(Y\ddt N\right)\left(0,0\right)\,,
\end{eqnarray*}
since $\frac{\partial g}{\partial t}\left(x,y,0\right)=Y\left(x,y\right)$
(component\textendash{}wise) and $g\left(\ddt,\ddt,0\right)=Id$.
\\
Define $\tilde{Z}:=Z+RY$; the conjugacy equation $\mathcal{N}^{*}Z=\tilde{Z}$
writes, component\textendash{}wise, 
\begin{eqnarray}
Z\circ g\circ\pi & = & Z\ddt\left(g\circ\pi\right)+RY\ddt\left(g\circ\pi\right)\nonumber \\
 & = & D\mathcal{N}\left(Z+RY\right)\nonumber \\
 & = & \left(Z\ddt g+\left(Z\ddt N\right)\frac{\partial g}{\partial t}+R\left(Y\ddt g+\left(Y\ddt N\right)\frac{\partial g}{\partial t}\right)\right)\circ\pi\label{eq:conjug}
\end{eqnarray}
Since $g(\ddt,\ddt t)^{*}Y=Y$ we have $\frac{\partial g}{\partial t}=Y\circ g=Y\ddt g$.
On the other hand, 
\begin{eqnarray*}
Z\ddt g & = & \sum_{n\geq0}\frac{t^{n}}{n!}Z\ddt Y\ddt^{n}Id\,.
\end{eqnarray*}
For $Y\ddt D=0$ and $\left[Z,Y\right]=DY$ the equality $Z\ddt Y\ddt^{n}=Y\ddt^{n}Z\ddt+nDY\ddt^{n}$
holds, which further yields  : 
\begin{eqnarray*}
Z\ddt g & = & Z\circ g+tDY\circ g\,.
\end{eqnarray*}
Equation (\ref{eq:conjug}) writes now
\begin{eqnarray*}
Z\circ g\circ\pi & = & Z\circ g\circ\pi+\left(Z\ddt N+DN+R\left(1+Y\ddt N\right)\right)Y\circ g\circ\pi
\end{eqnarray*}
 and is satisfied if, and only if, 
\begin{eqnarray*}
R & = & -\frac{Z\ddt N+DN}{1+Y\ddt N}\,.
\end{eqnarray*}

\item The first statement is actually given by (2) when $Y:=Z$, $D:=0$
and $N:=T$. Write $\mathcal{T}^{*}Y=U\left(Y+RZ\right)$ and $\delta:=D$.
With the corresponding notations, we need to solve
\begin{eqnarray*}
Y\circ g\circ\pi & = & U\left(Y\ddt g+\left(Y\ddt T+R\left(1+Z\ddt T\right)\right)Z\circ g\right)\circ\pi\,.
\end{eqnarray*}
Taking into account that
\begin{eqnarray*}
Y\ddt Z\ddt^{n} & = & \sum_{p\leq n}C_{n}^{p}\left(-\delta\right)^{n-p}Z\ddt^{p}Y\ddt
\end{eqnarray*}
we obtain
\begin{eqnarray*}
Y\ddt g & = & \sum_{0\leq n}\sum_{p\leq n}\frac{t^{n}}{n!}C_{n}^{p}\left(-\delta\right)^{n-p}Z\ddt^{p}Y\ddt Id\\
 & = & \sum_{0\leq p}\left(\sum_{p\leq n}\frac{\left(-\delta t\right)^{n-p}}{\left(n-p\right)!}\right)\frac{t^{p}}{p!}Z\ddt^{p}Y\ddt Id\\
 & = & e^{-\delta t}Y\circ g
\end{eqnarray*}
which yields the result since $Z\trs Y$. 
\end{enumerate}
\end{proof}

\subsection{Affine Lie algebras}

~

We first show that $Z$ cannot be too degenerate. Then we deal with
a non\textendash{}isolated singularity as a showcase situation for
the subtler setting of (quasi\textendash{})resonant singularities.

\bigskip{}

If $Z$ is singular let $Z_{0}$ denote the linear part of $Z$ at
$\left(0,0\right)$. Up to a linear change of variables we can put
$Z_{0}$ under Jordan normal form 
\begin{eqnarray*}
Z_{0}(x,y) & = & \lambda_{1}x\pp x+\lambda_{2}\left(y+\varepsilon x\right)\pp y\,\,,\,\varepsilon\in\left\{ 0,1\right\} \,,
\end{eqnarray*}
with $\varepsilon=0$ if $\lambda_{1}\neq\lambda_{2}$. In all the
paragraph we consider some meromorphic vector field $Y=\frac{1}{\frak{p}}W$
where $W\neq0$ is a holomorphic vector field and $\frak{p}$ a holomorphic
function, coprime with $W$. If $Y$ is actually holomorphic we make
the convention $\frak{p}:=1$. We suppose that $\lie{Z,Y}$ is affine,
which may be degenerate. We recall that formula~\eqref{eq:commut_holo}
and Lemma~\ref{lem:poles_Y} imply the existence of a holomorphic
$K$ with
\begin{eqnarray*}
Z\cdot\frak{p} & = & K\frak{p}\\
\left[Z,W\right] & = & \left(\delta+K\right)W\,.
\end{eqnarray*}

\begin{lem}
\label{lem:degen}The linear part $Z_{0}$ is non\textendash{}nilpotent.\end{lem}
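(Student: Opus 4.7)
The strategy is to argue by contradiction: assuming $Z_{0}$ is nilpotent, I will derive $\delta=0$, contradicting the affine hypothesis. The recurring technical ingredient is the following elementary fact. If $N$ is a nilpotent $2\times 2$ matrix, regarded as a linear vector field on $\mathbb{C}^{2}$, then both the derivation $N\ddt$ acting on the space of homogeneous polynomials of fixed degree, and the adjoint action $\mathrm{ad}_{N}$ acting on the space of homogeneous polynomial vector fields of fixed degree, are nilpotent linear endomorphisms; in particular, neither admits a nonzero eigenvalue.

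First I would dispose of the cofactor $K$. Expand $\frak{p}=\frak{p}^{(k)}+\cdots$ in homogeneous components with $\frak{p}^{(k)}\neq 0$ the initial term. If $\frak{p}$ is a unit (\emph{i.e.} $Y$ is holomorphic, in which case the convention $\frak{p}:=1$ yields $K=0$) there is nothing to do. Otherwise Lemma~\ref{lem:poles_Y} forces $k\geq 1$, and extracting the degree-$k$ part of $Z\ddt\frak{p}=K\frak{p}$ produces
\[
Z_{0}\ddt\frak{p}^{(k)} \;=\; K(0,0)\,\frak{p}^{(k)}.
\]
The nilpotency of $Z_{0}\ddt$ on degree-$k$ polynomials then forces $K(0,0)=0$. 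In every case, $K(0,0)=0$.

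Next I would expand $W=W^{(m)}+\cdots$ with $W^{(m)}\neq 0$ its initial homogeneous component, and read the lowest-order part of the commutation relation $[Z,W]=(\delta+K)W$:
\[
[Z_{0},W^{(m)}] \;=\; \bigl(\delta+K(0,0)\bigr)\,W^{(m)} \;=\; \delta\,W^{(m)}.
\]
Since $W^{(m)}\neq 0$ and $\mathrm{ad}_{Z_{0}}$ is nilpotent on the finite-dimensional space of degree-$m$ vector fields, its only eigenvalue is $0$. Hence $\delta=0$, the desired contradiction.

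The main obstacle is thus merely the nilpotency of the two actions on each homogeneous piece. This reduces to a direct calculation via Jordan normal form: if $Z_{0}=0$ both operators vanish; otherwise $Z_{0}$ is linearly conjugate to $y\pp x$, and one checks by hand that $(Z_{0}\ddt)^{k+1}$ annihilates every degree-$k$ polynomial, with an analogous bound for $\mathrm{ad}_{Z_{0}}$ acting on the finite-dimensional space of polynomial vector fields of fixed degree. Equivalently, one may invoke the general fact that a nilpotent element of $\mathfrak{gl}_{2}$ acts nilpotently in every finite-dimensional representation.
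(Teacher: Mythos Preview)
Your argument is correct and follows the same strategy as the paper's: pass to lowest-order homogeneous parts in $Z\cdot\frak{p}=K\frak{p}$ and $[Z,W]=(\delta+K)W$ and exploit that a nilpotent $Z_0$ has no nonzero eigenvalue on those finite-dimensional pieces. The only difference is packaging: the paper splits into the cases $Z_0=0$ and $Z_0\sim y\pp x$, handling the latter via an explicit auxiliary lemma on the eigenvalue equation $y\,\partial_x f=\alpha f$ for homogeneous $f$, whereas you treat both cases at once through the general principle that a nilpotent element of $\mathfrak{gl}_2$ acts nilpotently in every finite-dimensional representation.
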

\begin{proof}
Assume $Z_{0}=0$. Looking at the least homogeneous degree on both
sides of $Z\cdot\frak{p}=K\frak{p}$ implies that $K\left(0,0\right)=0$.
On the other hand, noting $W_{0}\neq0$ the least homogeneous part
of $W$:
\begin{eqnarray*}
0=Z_{0}\cdot W_{0}-W_{0}\cdot Z_{0} & = & \delta W_{0}\,.
\end{eqnarray*}
Therefore $\delta=0$.

Assume now $Z_{0}\neq0$ but $\lambda_{1}=\lambda_{2}=0$ with $\varepsilon=1$.
We mention the following immediate lemma:
\begin{lem}
\label{lem:d-basic_homog}Let $\alpha\in\ww C$ and $f\in\pol{}{x,y}$
be a homogeneous polynomial such that $y\frac{\partial f}{\partial x}=\alpha f$.
Then $\alpha f=0$.
\end{lem}
If $K\left(0,0\right)\neq0$ the relation $Z\cdot\frak{p}=K\frak{p}$
reads $Z_{0}\cdot\frak{p}_{0}=y\frac{\partial\frak{p}_{0}}{\partial x}=K\left(0,0\right)\frak{p}_{0}$
for the least homogeneous degree. Since $\frak{p}_{0}\neq0$ this
contradicts the above lemma, so that $K\left(0,0\right)=0$. Now the
relation 
\begin{eqnarray*}
Z_{0}\cdot W_{0}-W_{0}\cdot Z_{0} & = & \delta W_{0}
\end{eqnarray*}
holds and writing $W_{0}:=C\pp x+D\pp y$ we derive $y\pp xC=\delta C$
and $y\pp xD=\delta D+C$. Using again the lemma we deduce $C=D=0$,
a contradiction.\end{proof}
\begin{cor}
\label{cor:non-isolated}Let $Z$ be a holomorphic singular vector
field with a non\textendash{}isolated singularity at $\left(0,0\right)$.
If there exists an affine Lie algebra $\lie{Z,Y}$ of ratio $\delta$,
maybe degenerate, then there exists a local analytic change of coordinates
$\psi$ such that $\psi^{*}Z=\lambda_{2}y\left(1+\mu x^{k}\right)\pp y$
with $\lambda_{2}\neq0$, $k\in\ww N_{}$ and $\mu\in\ww C$. If moreover
$Y\trs Z$ then $\mu=0$ and we have, for a unique $m\in\ww Z$ and
meromorphic germs $a,\, b\in\mero x$,~$b\neq0$:
\begin{eqnarray*}
\psi^{*}Y & = & y^{m}\left(a\left(x\right)\psi^{*}Z+b\left(x\right)\pp x\right)\\
\delta & = & m\lambda_{2}\,.
\end{eqnarray*}
\end{cor}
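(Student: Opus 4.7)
My plan is to reduce $Z$ to a polynomial model via a sequence of coordinate changes, using first the non-isolated hypothesis together with the non-nilpotency from Lemma~\ref{lem:degen}, and then the commutation relation; finally I read off the form of $Y$ in the transverse case.

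\emph{Reduction of $Z$ to $y\beta(x)\pp y$.} Lemma~\ref{lem:degen} forces the linear part $Z_{0}$ to be non-nilpotent; since the singular locus of $Z$ is at least one-dimensional, $Z_{0}$ must have rank at most one, leaving (after linear change of coordinates) $Z_{0}=\lambda_{2}y\pp y$ with $\lambda_{2}\neq0$. The singular curve of $Z$ through the origin is tangent to $\ker Z_{0}=\{y=0\}$ and can be straightened onto $\{y=0\}$, so $Z=y\tilde Z$ with $\tilde Z$ non-vanishing and $\tilde Z(0,0)=\lambda_{2}\pp y$. Rectifying the regular foliation induced by $\tilde Z$ while keeping $\{y=0\}$ as the transverse section yields coordinates in which $Z=y\beta(x,y)\pp y$ with $\beta(0,0)=\lambda_{2}$. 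The fiber-preserving change $(x,y)\mapsto(x,H(x,y))$ defined by
\[
H(x,y)=y\exp\int_{0}^{y}\frac{\beta(x,0)-\beta(x,t)}{t\beta(x,t)}\,\mathrm{d}t
\]
solves $y\beta H_{y}=H\beta(x,0)$ and eliminates the $y$-dependence of the coefficient, giving $Z=y\beta(x)\pp y$ with $\beta(0)=\lambda_{2}$.

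\emph{The Lie algebra forces $\beta\equiv\lambda_{2}$.} Write $Y=W/\mathfrak{p}$ with $W=C\pp x+D\pp y$ holomorphic coprime to $\mathfrak{p}$. By Lemma~\ref{lem:poles_Y}, $\mathfrak{p}$ is a product of separatrices of $Z$; a direct inspection shows these are exactly $\{y=0\}$ and the vertical lines $\{x=c\}$, hence $\mathfrak{p}=y^{p}\prod(x-c_{i})^{p_{i}}$ and the cofactor is $K=Z\cdot\mathfrak{p}/\mathfrak{p}=p\beta(x)$. Setting $\tilde\delta:=\delta+K$, the $\pp x$-component of $[Z,W]=\tilde\delta W$ reads $y\beta(x)C_{y}=\tilde\delta C$. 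Expanding $C=y^{n_{0}}c(x,y)$ with $c(x,0)\not\equiv0$ (or performing the analogous expansion of $D$ if $C\equiv0$) and reading the leading order in $y$ gives
\[
n_{0}\beta(x)=\tilde\delta(x,0)=\delta+p\beta(x),
\]
whence $\beta(x)=\delta/(n_{0}-p)$ is a nonzero constant equal to $\lambda_{2}$. Setting $m:=n_{0}-p\in\ww Z$ yields $\delta=m\lambda_{2}$ and $Z=\lambda_{2}y\pp y$, i.e.\ the announced normal form with $\mu=0$.

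\emph{Form of $Y$ in the transverse case and main difficulty.} When $Y\trs Z$ we have $C\not\equiv0$ with $n_{0}\geq0$; matching the next order in $y$ in the above equation gives $y c_{y}=0$, hence $c=c(x)$. The $\pp y$-component of the commutation relation similarly yields $D=y^{n_{0}+p+1}d(x)$ with $d$ holomorphic in $x$. Dividing by $\mathfrak{p}$ and regrouping produces
\[
Y=y^{m}\bigl(a(x)Z+b(x)\pp x\bigr)
\]
with $a,b\in\mero x$, $b\not\equiv0$, and $m$ uniquely determined by the order of the $\pp x$-coefficient of $Y$ along $\{y=0\}$. The step I expect to be most delicate is the Lie-algebra rigidification: both transverse and degenerate $Y$ must be handled uniformly, with an arbitrary meromorphic polar locus. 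The key simplification is that every separatrix of $Z=y\beta(x)\pp y$ is either $\{y=0\}$ or vertical, so the cofactor $K$ is automatically a scalar multiple of $\beta$, reducing the integrability condition to a single scalar equation that forces $\beta$ constant.
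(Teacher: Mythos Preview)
Your argument is correct and reaches the conclusion, but it follows a different path from the paper's. The paper first normalises $Z$ to $\lambda_{2}y(1+\mu x^{k})\pp y$ using only the non-nilpotency of $Z_{0}$, by means of the flow-based changes of coordinates of Proposition~\ref{pro:chg_coord} (a tangential one along $\hat Z$ to kill the $y$-dependence of the unit, then a transverse one along $x\pp x$ to reduce the remaining function of $x$ to $1+\mu x^{k}$); only afterwards does it invoke the Lie bracket, writing $Y=AZ+B\pp x$ and solving $Z\cdot B=\delta B$ explicitly to see that meromorphy of $B$ forces $\mu=0$ and $\delta/\lambda_{2}\in\ww Z$. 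You instead bypass the intermediate $(1+\mu x^{k})$ model: after an explicit fibre-preserving change you land on $Z=y\beta(x)\pp y$, and then the bracket relation applied to the holomorphic numerator $W$ of $Y$, together with the observation that the cofactor of any separatrix of this $Z$ is an integer multiple of $\beta$, pins $\beta$ to a constant in one stroke. This is more elementary (it avoids Proposition~\ref{pro:chg_coord}) and in fact proves a little more, since your computation yields $\mu=0$ already in the degenerate affine case, whereas the paper only claims this under $Y\trs Z$; the paper's route, on the other hand, illustrates the tangential/transversal machinery that is reused throughout the article. One small slip: the $\pp y$-component of $\left[Z,W\right]=\tilde\delta W$ gives $yD_{y}=(n_{0}+1)D$, hence $D=y^{n_{0}+1}d(x)$ rather than $y^{n_{0}+p+1}d(x)$; after dividing by $\frak{p}=y^{p}g(x)$ you still recover the correct final form $Y=y^{m}\left(aZ+b\pp x\right)$ with $m=n_{0}-p$.
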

\begin{proof}
The previous lemma states that $Z_{0}\neq0$ so if $\left\{ s=0\right\} $
is the singular locus of $Z$ then the homogeneous valuation of $s$
at $\left(0,0\right)$ is $1$ and $Z=sX$ with $X\left(0,0\right)\neq0$.
Up to change the local coordinates we may assume $X=\pp y$ and $s\left(x,y\right)=\lambda_{2}y+\cdots$
with $\lambda_{2}\neq0$. According to the implicit function theorem
we can locally write
\begin{eqnarray*}
\left\{ s\left(x,y\right)=0\right\}  & = & \left\{ y=u\left(x\right))\right\} 
\end{eqnarray*}
 for some holomorphic $u$, so that $\left(x,y\right)\mapsto\left(x,y-u\left(x\right)\right)$
provides a conjugacy between $Z$ and $\tilde{Z}\left(x,y\right):=\lambda_{2}\tilde{s}(x,y)y\pp y$
with $\tilde{s}(0,0)=1$. Now, according to Proposition~\ref{pro:chg_coord},
conjugating $\hat{Z}\left(x,y\right):=\lambda_{2}\tilde{s}(x,0)y\pp y$
to $\tilde{Z}$ through a tangential change of coordinates $\mathcal{T}:=\flw{\hat{Z}}T{}$
is equivalent to solving
\begin{eqnarray*}
\lambda_{2}y\frac{\partial T}{\partial y}(x,y) & = & \frac{1}{\tilde{s}(x,y)}-\frac{1}{\tilde{s}(x,0)}\,.
\end{eqnarray*}
A holomorphic solution $T$ always exists. 

Finally we conjugate some $\lambda_{2}\left(1+\mu x^{k}\right)y\pp y$
to $\lambda_{2}\tilde{s}(x,0)y\pp y$ by using a transverse change
of coordinates $\mathcal{N}(x,y):=\flw{x\pp x}{N(x)}{(x,y)}$. This
change of coordinates is an isotropy of $\lambda_{2}y\pp y$ so that
we only need to solve 
\begin{eqnarray*}
\tilde{s}(x,0) & = & 1+\mu x^{k}\exp\left(pN(x)\right).
\end{eqnarray*}
Either $\tilde{s}\left(x,0\right)$ is constant and we set $\mu:=0$,
or $\tilde{s}(x,0)=1+\mu\left(x^{k}+\cdots\right)$ with $\mu\neq0$.
In any case we obtain a holomorphic solution $N$.

Assume that the coordinates are changed accordingly: $Z\left(x,y\right)=\lambda_{2}y\left(1+\mu x^{k}\right)\pp y$
and write $Y=AZ+B\pp x$ for meromorphic germs $A,\, B$ with $B\neq0$.
The relation $\left[Z,Y\right]=\delta Y$ becomes 
\begin{eqnarray*}
\begin{cases}
Z\cdot A & =\delta A+\frac{k\mu x^{k-1}}{1+\mu x^{k}}B\\
Z\cdot B & =\delta B
\end{cases} &  & \,.
\end{eqnarray*}
If $\delta=0$ the second equation yields $B\in\mero x$ and taking
$y:=0$ in the first one yields $\mu=0$, then $A\in\mero x$. On
the contrary when $\delta\neq0$ we have 
\begin{eqnarray*}
B\left(x,y\right) & = & b\left(x\right)y^{\frac{\delta}{\lambda_{2}\left(1+\mu x^{k}\right)}}
\end{eqnarray*}
so that the meromorphy of $B$ imposes $\mu=0$ and $m:=\nf{\delta}{\lambda}_{2}\in\ww Z$.
Plugging this relation into the first equation finally gives $A\left(x,y\right)=a\left(x\right)y^{1+m}$.
\end{proof}

\section{\label{sec:irrat}Quasi\textendash{}resonant vector fields}

We consider here 
\begin{eqnarray*}
Z\left(x,y\right) & = & \left(\lambda_{1}x+\ldots\right)\pp x+\left(\lambda_{2}y+\ldots\right)\pp y
\end{eqnarray*}
where «$\ldots$» stands for terms of homogeneous degree greater than
$1$ and $\lambda:=\lambda_{1}/\lambda_{2}\in\ww R_{<0}\backslash\ww Q$.
We denote the linear part of $Z$ by $Z_{0}$. In this section we
aim to prove the following result.
\begin{thm}
\label{thm:irrat-th}Assume that $\lambda\in\ww R_{<0}\backslash\ww Q$.
Any non degenerate Lie algebra $\lie{Z,Y}$ of ratio $\delta$ is
analytically conjugate to some $\lie{Z_{0},\tilde{Y}}$ where $Z_{0}:=\lambda_{1}x\pp x+\lambda_{2}y\pp y$
is the linear part of $Z$ and $\tilde{Y}$ is as follows. There exist
a unique $\left(n,m\right)\in\ww Z^{2}$ and $\left(d,c\right)\in\ww C$
such that
\begin{eqnarray*}
\tilde{Y}\left(x,y\right) & = & x^{n}y^{m}\left(dx\pp x+cy\pp y\right)\,.
\end{eqnarray*}
 The ratio of $\lie{Z,Y}$ is given by
\begin{eqnarray*}
\delta & = & n\lambda_{1}+m\lambda_{2}\,.
\end{eqnarray*}

\end{thm}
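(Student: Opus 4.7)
The plan is to execute the normalization procedure from the introduction for the quasi-resonant case. Since $\lambda=\lambda_1/\lambda_2\in\ww R_{<0}\setminus\ww Q$ admits no Poincar\'e resonance, the standard cohomological argument produces a tangent-to-identity formal diffeomorphism $\hat\psi\in\frml{x,y}^2$ with $\hat\psi^*Z=Z_0$. Setting $\hat Y:=\hat\psi^*Y$, the identity $[Z_0,\hat Y]=\delta\hat Y$ evaluated on the monomial decomposition $\hat Y=\sum(a_{ij}\partial_x+b_{ij}\partial_y)x^iy^j$ reads
\[
a_{ij}\bigl(\lambda_1(i-1)+\lambda_2 j-\delta\bigr)=0,\qquad b_{ij}\bigl(\lambda_1 i+\lambda_2(j-1)-\delta\bigr)=0.
\]
Irrationality of $\lambda$ forces at most one integer solution per equation, and these solutions must be compatible in the form $(n+1,m)$ and $(n,m+1)$, yielding $\hat Y=x^ny^m(dx\partial_x+cy\partial_y)$ with $\delta=n\lambda_1+m\lambda_2$ for a unique $(n,m)\in\ww Z^2$. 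Transversality $Z\pitchfork Y$ transfers to $Z_0\pitchfork\hat Y$, i.e.\ $c\lambda_1\neq d\lambda_2$.

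To prove that $\hat\psi$ converges, I work in analytic coordinates straightening the two Camacho--Sad separatrices of $Z$ to the coordinate axes, so $Z=xA\partial_x+yB\partial_y$ with $A(0,0)=\lambda_1$, $B(0,0)=\lambda_2$. The inverse $\Phi:=\hat\psi^{-1}$ may be chosen of the form $(x,y)\mapsto(xU,yV)$ with $U,V\in\frml{x,y}$ formal units. The relations $\Phi^*Z_0=Z$ and $\Phi^*\hat Y=Y$ translate componentwise into $Z\cdot\Phi_j=\lambda_j\Phi_j$ and $Y\cdot\Phi_j=\hat Y_j\circ\Phi$, leading to
\[
Z\cdot\log U=\lambda_1-A,\qquad Y\cdot\log U=d\,x^ny^mU^nV^m-Y^x/x,
\]
together with the two symmetric expressions for $V$ in which $d$ is replaced by $c$. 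Form $N:=-c\log U+d\log V$: the troublesome $x^ny^mU^nV^m$-term cancels out in $Y\cdot N$ by the choice of the coefficients $(-c,d)$, so that $Z\cdot N$ and $Y\cdot N=cY^x/x-dY^y/y$ are both meromorphic germs. Lemma~\ref{lem:tr-homog-rigid} applied to the meromorphic transverse pair $(Z,Y)$ then forces $N$ to be a convergent germ.

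A second independent meromorphic combination of $(\log U,\log V)$ must be extracted from the transverse Godbillon--Vey 1-form to conclude. The closed meromorphic time-form $\tau_Z$, dual basis element of $(Z,Y)$ (closed by Lemma~\ref{lem:godbillon}), pulls back through $\Phi$ to a logarithmic expression whose comparison with the explicit form in target coordinates recovers the combination $N$, while the meromorphic form $\tau_Y$ (non-closed when $\delta\neq 0$) or the auxiliary commuting vector field $y\partial_y$ (when $\delta=0$) supplies a second relation in $(\log U,\log V)$ linearly independent from $N$ thanks to $c\lambda_1\neq d\lambda_2$. Solving the resulting $2\times 2$ system yields $\log U,\log V\in\mero{x,y}$ and hence $\hat\psi$ analytic, realising the announced conjugacy with $\tilde Y:=\hat Y$. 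The main obstacle is precisely this last step: Lemma~\ref{lem:tr-homog-rigid} directly supplies only one meromorphic relation via the cancellation engineered by the coefficients $(-c,d)$, and obtaining the second requires a careful use of the closed transverse 1-form structure inherent in any non-degenerate affine Lie algebra.
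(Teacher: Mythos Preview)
Your route differs from the paper's. You attack the convergence of the full linearization $\Phi=(xU,yV)$ directly, whereas the paper factors the formal conjugacy as a tangential map $\flw{X}{T}{}$ followed by a transverse map $\flw{Y_0}{N}{}$ with $Y_0=y\pp y$, and establishes convergence of each factor by a separate mechanism.

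Your first relation is correct: the cancellation in $Y\cdot(-c\log U+d\log V)$ together with Lemma~\ref{lem:tr-homog-rigid} gives convergence of $N:=-c\log U+d\log V$. You are also right that this is precisely the information carried by the closed time form, since the dual element $\tau_{Z_0}$ of $(Z_0,\hat Y)$ is proportional to $-c\,\frac{\dd x}{x}+d\,\frac{\dd y}{y}$, so that $\Phi^*\tau_{Z_0}=\tau_Z$ yields $\dd N$ meromorphic. This is the paper's tangential rigidity (Proposition~\ref{pro:irrat-tang-rigid}) in another guise.

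The genuine gap is the second relation when $\delta\neq0$. Writing $M:=-\lambda_2\log U+\lambda_1\log V$, the identity $\Phi^*\tau_{\hat Y}=\tau_Y$ reads
\[
-\lambda_2\,\frac{\dd x}{x}+\lambda_1\,\frac{\dd y}{y}+\dd M\;=\;(c\lambda_1-d\lambda_2)\,x^ny^m\,U^nV^m\,\tau_Y\,,
\]
and the formal unit $U^nV^m$ on the right cannot be stripped away: one only gets $\dd M=(\text{formal})+(\text{meromorphic})$, which is empty. The alternative via the commuting field $y\pp y$ has the same defect, its pullback through $\Phi^{-1}$ being only formal. In short, your two routes to a convergent relation (the cancellation trick and $\tau_Z$) are the \emph{same} relation, and $\tau_Y$ provides an independent one only when $(n,m)=(0,0)$, i.e.\ $\delta=0$.

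The paper sidesteps this by first using the now-analytic tangential conjugacy to reduce to $Z=X=Z_0+R\,Y_0$, and then treating the remaining transverse conjugacy $\mathcal N=\flw{Y_0}{N}{}$ on its own. The decisive move (Lemma~\ref{lem:irrat-norm-rigid}) is to write $Y_0=aZ_0+b\hat Y$ and observe that $a$ is a nonzero \emph{constant} (meromorphic first integrals of $Z_0$ are constant). Pulling this identity back by $\mathcal N$ and combining $\mathcal N^*Y_0=(1+Y_0\cdot N)^{-1}Y_0$ from Proposition~\ref{pro:chg_coord} with $Z\pitchfork Y$ forces $Y_0\cdot N$ to be convergent; Lemma~\ref{lem:tr-homog-rigid} is then applied to the pair $(X,Y_0)$, not to $(Z,Y)$, giving $N$ convergent. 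When $\hat Y$ is tangent to $Y_0$ (that is, $d=0$) the paper swaps $Y_0$ for $x\pp x$ (Corollary~\ref{cor:irrat-norm-rigid}); this case distinction is also absent from your sketch.
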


\subsection{Preparation}

~

It is well known that the foliation induced by $Z$ admits two and
only two reduced separatrices, say $\mathcal{S}_{x}$ and $\mathcal{S}_{y}$
(see~\cite{CamaSad}). They are tangent to the eigenvectors of $Z_{0}$,
respectively $\left[\begin{array}{c}
0\\
1
\end{array}\right]$ and $\left[\begin{array}{c}
1\\
0
\end{array}\right]$. Hence for a sufficiently small open polydisc $V$ centered at $\left(0,0\right)$
there exist two holomorphic functions $f_{x}\in\mathcal{O}\left(V\cap\left\{ x=0\right\} \right)$
and $f_{y}\in\mathcal{O}\left(V\cap\left\{ y=0\right\} \right)$ such
that $f_{x}\left(0\right)=f_{y}\left(0\right)=0$ and  :
\begin{eqnarray*}
\mathcal{S}_{x}\cap V & = & \left\{ x=f_{x}(y)\right\} \,,\\
\mathcal{S}_{y}\cap V & = & \left\{ y=f_{y}(x)\right\} \,.
\end{eqnarray*}
 The analytic change of coordinates 
\begin{eqnarray*}
\psi\,:\, V & \to & \ww C^{2}\\
\left(x,y\right) & \mapsto & \left(x-f_{x}\left(y\right),y-f_{y}\left(x\right)\right)
\end{eqnarray*}
brings $Z$ into the vector field
\begin{eqnarray*}
\psi^{*}Z & = & \lambda_{1}x\left(1+A\right)\pp x+\lambda_{2}y\left(1+B\right)\pp y
\end{eqnarray*}
with $A\left(0,0\right)=B\left(0,0\right)=0$. As of now we consider
vector fields $Z$ written in the form  :
\begin{eqnarray*}
Z & = & UX\\
X & = & Z_{0}+Ry\pp y
\end{eqnarray*}
where 
\begin{eqnarray*}
Z_{0} & := & \lambda_{1}x\pp x+\lambda_{2}y\pp y\\
U & := & \left(1+A\right)\\
R & := & \lambda_{2}\left(\frac{1+B}{1+A}-1\right)\,.
\end{eqnarray*}
Notice that $U\left(0,0\right)=1$ and $R\left(0,0\right)=0$. 
\begin{rem}
\label{rem:irrat-equation-in-sep}The only separatrices of $Z$ are
now the branches of $\left\{ xy=0\right\} $. 
\end{rem}

\subsection{\label{sub:irrat-Z-FNF}Formal normalization}

~

We are going to show that $Z$ is formally conjugate to $Z_{0}$.
Although this fact is classical we include a short proof here in order
to introduce an ingredient that will play an important role in the
following. According to Proposition~\ref{pro:chg_coord} we want
to find formal power series $T,\, N\in\frml{x,y}$ such that $T\left(0,0\right)=N\left(0,0\right)=0$
and
\begin{eqnarray*}
X\cdot T & = & \frac{1}{U}-1\\
X\ddt N & = & -R
\end{eqnarray*}
since then $\psi_{T}:=\flw XT{}$ and $\psi_{N}:=\flw{y\pp y}N{}$
satisfies
\begin{eqnarray*}
\psi_{N}^{*}Z_{0} & = & X\\
\psi_{T}^{*}X & = & UX
\end{eqnarray*}
(notice that $\left[X_{0},y\pp y\right]=0$ so that $D=0$ in the
above\textendash{}mentioned proposition).
\begin{lem}
\label{lem:irrat-eq-h-form}Let $G\in\frml{x,y}$ be given. There
exists $F\in\frml{x,y}$ such that $X\ddt F=G$ if, and only if, $G\left(0,0\right)=0$.
The power series $F-F\left(0,0\right)$ is unique.\end{lem}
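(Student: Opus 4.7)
The plan is to solve the equation $X\cdot F = G$ degree-by-degree using the graduation $\frml{x,y}=\prod_{d\geq 0}\mathcal{H}_d$ by homogeneous polynomials. The two structural facts that drive the argument are: (i) the linear operator $Z_0 = \lambda_1 x\partial_x + \lambda_2 y\partial_y$ is diagonal in the monomial basis with
\[
Z_0\cdot(x^n y^m) \;=\; (n\lambda_1+m\lambda_2)\, x^n y^m,
\]
and since $\lambda = \lambda_1/\lambda_2\in\ww R_{<0}\setminus\ww Q$, the eigenvalue $n\lambda_1+m\lambda_2$ vanishes only when $(n,m)=(0,0)$; (ii) because $R(0,0)=0$, the perturbation $Ry\partial_y$ strictly raises homogeneous degree, i.e.\ it sends $\mathcal{H}_d$ into $\prod_{d'>d}\mathcal{H}_{d'}$.

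For necessity, evaluating $X\cdot F = G$ at $(0,0)$ and noting $X(0,0)=0$ forces $G(0,0)=0$.

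For sufficiency and uniqueness, write $F = \sum_d F_d$ and $G=\sum_d G_d$ with $F_d,G_d\in\mathcal{H}_d$. The degree-$d$ component of $X\cdot F$ reads
\[
(X\cdot F)_d \;=\; Z_0\cdot F_d \;+\; \bigl(Ry\partial_y F\bigr)_d,
\]
and by (ii) the second summand depends only on $F_0,\ldots,F_{d-1}$. The constant $F_0$ is free (and must be, since $X\cdot F_0=0$), and the hypothesis $G_0=0$ makes the degree-$0$ equation trivially satisfied. For $d\geq 1$, assuming $F_0,\ldots,F_{d-1}$ have already been determined, the equation becomes
\[
Z_0\cdot F_d \;=\; G_d - \bigl(Ry\partial_y F\bigr)_d,
\]
where the right-hand side is a known element of $\mathcal{H}_d$. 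By (i) the restriction of $Z_0$ to $\mathcal{H}_d$ is diagonal with all eigenvalues nonzero, hence invertible, and $F_d$ is uniquely determined. This recursion produces a well-defined formal power series $F$, proving existence, and shows that the only freedom is the choice of constant term $F_0$, proving uniqueness of $F-F(0,0)$.

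There is no real obstacle: no small-divisor issue arises because we work only at the formal level, where one merely needs the eigenvalues $n\lambda_1+m\lambda_2$ to be nonzero, not bounded below. The whole argument rests on the irrationality of $\lambda$ together with the fact that $R$ vanishes at the origin, which together make the system of coefficient equations triangular in the degree filtration and invertible on each graded piece of positive degree.
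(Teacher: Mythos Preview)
Your proof is correct and follows essentially the same approach as the paper: expand in monomials, use that $R(0,0)=0$ makes the system triangular in the homogeneous degree, and invert $Z_0$ on each graded piece using the non\textendash{}resonance $a\lambda_1+b\lambda_2\neq 0$ for $(a,b)\neq(0,0)$. The paper's version is just a terser rendition of exactly this argument.
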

\begin{proof}
Write $G\left(x,y\right)=\sum_{a,b}g_{a,b}x^{a}y^{b}$ and $F\left(x,y\right)=\sum_{a,b}f_{a,b}x^{a}y^{b}$.
Then
\begin{eqnarray*}
\left(a\lambda_{1}+b\lambda_{2}\right)f_{a,b}+o(a,b) & = & g_{a,b}\,,
\end{eqnarray*}
where $o\left(a,b\right)$ stands for terms involving $f_{c,d}$ for
$c+d<a+b$ only. Since $a\lambda_{1}+b\lambda_{2}\neq0$ if $\left(a,b\right)\neq\left(0,0\right)$
the only constraint is $g_{0,0}=0$; moreover $F$ is unique up to
the choice of $f_{0,0}$.\end{proof}
\begin{cor}
\label{cor:irrat-Z-FNF}The vector field $Z$ is formally conjugate
to its linear part $Z_{0}$.
\end{cor}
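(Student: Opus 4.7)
The plan is to realize the formal conjugacy as the composition of a transverse and a tangential change of coordinates, each arising from Lemma~\ref{lem:irrat-eq-h-form} applied to one cohomological equation. The preparation already writes $Z=UX$ with $X=Z_{0}+Ry\pp y$, $U(0,0)=1$ and $R(0,0)=0$, so the two formal power series $N,T\in\frml{x,y}$ to produce should both vanish at the origin and solve transport equations along $X$.

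First I would construct the \emph{transverse} normalization. By Proposition~\ref{pro:chg_coord}(2) applied with $Y:=y\pp y$ (so $D=0$, since $\bigl[Z_{0},y\pp y\bigr]=0$), the map $\psi_{N}:=\flw{y\pp y}{N}{}$ satisfies $\psi_{N}^{*}Z_{0}=X$ if and only if $X\ddt N=-R$. Since $R(0,0)=0$, Lemma~\ref{lem:irrat-eq-h-form} supplies such an $N$, and fixing $N(0,0)=0$ makes it unique; by Proposition~\ref{pro:chg_coord}(1) the resulting $\psi_{N}$ is then a formal change of coordinates tangent to the identity.

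Next comes the \emph{tangential} step. By Proposition~\ref{pro:chg_coord}(3) the map $\psi_{T}:=\flw{X}{T}{}$ pulls $X$ back to $\frac{1}{1+X\ddt T}X$, so obtaining $\psi_{T}^{*}X=UX=Z$ amounts to solving $X\ddt T=\frac{1}{U}-1$. The right\textendash{}hand side vanishes at $(0,0)$ because $U(0,0)=1$, so Lemma~\ref{lem:irrat-eq-h-form} again furnishes a unique $T$ with $T(0,0)=0$, and $\psi_{T}$ is another formal change of coordinates tangent to the identity. Setting $\psi:=\psi_{N}\circ\psi_{T}$ I obtain $\psi^{*}Z_{0}=\psi_{T}^{*}\bigl(\psi_{N}^{*}Z_{0}\bigr)=\psi_{T}^{*}X=Z$, so that $\psi^{-1}$ formally conjugates $Z$ to its linear part.

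The only substantive obstacle is the solvability of the two transport equations, and this is exactly what Lemma~\ref{lem:irrat-eq-h-form} takes care of: the assumption $\lambda\in\ww R_{<0}\backslash\ww Q$ guarantees that $a\lambda_{1}+b\lambda_{2}\neq 0$ for every $(a,b)\in\ww Z_{\geq 0}^{2}\setminus\{(0,0)\}$, so no resonant denominator appears and the coefficient recursion proceeds unobstructed. No small\textendash{}divisor analysis is required here since only formal convergence is being claimed; genuine analytic convergence is a separate matter addressed elsewhere in the section.
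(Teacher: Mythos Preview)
Your proof is correct and follows essentially the same route as the paper: solve the two cohomological equations $X\ddt N=-R$ and $X\ddt T=\frac{1}{U}-1$ via Lemma~\ref{lem:irrat-eq-h-form}, then invoke Proposition~\ref{pro:chg_coord} to realize the formal conjugacy as the composition of the transverse map $\psi_N=\flw{y\pp y}{N}{}$ and the tangential map $\psi_T=\flw{X}{T}{}$. The only difference is expository order, and your additional remark on why no resonant denominators appear is a helpful clarification the paper leaves implicit.
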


\subsection{\label{sub:irrat-form-aff-lie}Lie algebras of $Z_{0}$}

~

Starting with $\lie{Z,Y}$ we have now to consider $\hat{\psi}^{*}\lie{Z,Y}=\lie{Z_{0},\hat{Y}}$
with $\hat{Y}:=\hat{\psi}^{*}Y$ a formal meromorphic vector field.
An important part of the rigidity property we will study below needs
to know that in fact $\hat{Y}$ is a meromorphic vector field. As
a matter of fact we will show that it is essentially unique.
\begin{prop}
\label{pro:irrat-form-aff-lie}Let $\hat{Y}\neq0$ be a formal meromorphic
vector field satisfying $\left[Z_{0},\hat{Y}\right]=\delta\hat{Y}$
with $\delta\in\ww C$. There exists two unique pairs $\left(n,m\right)\in\ww Z^{2}$
and $\left(c,d\right)\in\ww C^{2}$ such that 
\begin{eqnarray*}
\delta & = & n\lambda_{1}+m\lambda_{2}
\end{eqnarray*}
and 
\begin{eqnarray*}
\hat{Y}\left(x,y\right) & = & x^{n}y^{m}\left(dx\pp x+cy\pp y\right)\,.
\end{eqnarray*}
\end{prop}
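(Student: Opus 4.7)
The strategy is to diagonalize the adjoint action $W\mapsto\left[Z_{0},W\right]$ on formal vector fields in the monomial basis $x^{a}y^{b}\pp x$, $x^{a}y^{b}\pp y$, and to exploit the injectivity of $\left(a,b\right)\mapsto a\lambda_{1}+b\lambda_{2}$ on $\ww Z^{2}$, which holds because $\lambda\in\ww R_{<0}\backslash\ww Q$.

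First I would reduce to a formal holomorphic problem by clearing the poles of $\hat{Y}$. Writing $\hat{Y}=\frac{1}{\mathfrak{p}}W$ with $W$ a formal holomorphic vector field and $\mathfrak{p}\in\frml{x,y}$ coprime with $W$ (possible since $\frml{x,y}$ is a UFD), the formal counterpart of Lemma~\ref{lem:poles_Y} --- whose proof only uses the commutator identity~\eqref{eq:commut_holo} --- shows that $\mathfrak{p}$ is a separatrix of $Z_{0}$. By the formal version of Lemma~\ref{lem:d-basic} one may write $\mathfrak{p}=u\prod_{j}f_{j}^{\alpha_{j}}$ with $u$ a unit and $f_{j}$ irreducible formal separatrices of $Z_{0}$. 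The irrationality of $\lambda$ forces each such $f_{j}$ to be associate to $x$ or $y$, as one sees by inspecting the lowest homogeneous component of any nontrivial solution of $Z_{0}\ddt f=Kf$: at that degree only a single monomial eigenvector can survive, so irreducibility imposes $f\sim x$ or $f\sim y$ up to a unit. Hence $\mathfrak{p}=u\,x^{N_{1}}y^{N_{2}}$ with $N_{1},N_{2}\in\ww Z_{\geq 0}$, and absorbing $u$ into $W$ we have $\hat{Y}=x^{-N_{1}}y^{-N_{2}}W$. The Leibniz rule $\left[Z_{0},fW\right]=\left(Z_{0}\ddt f\right)W+f\left[Z_{0},W\right]$ then reduces the problem to finding all formal holomorphic $W\neq 0$ satisfying $\left[Z_{0},W\right]=\delta'W$ with $\delta':=\delta+N_{1}\lambda_{1}+N_{2}\lambda_{2}$.

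I then expand $W=\sum p_{a,b}x^{a}y^{b}\pp x+\sum q_{a,b}x^{a}y^{b}\pp y$ and compute
\begin{eqnarray*}
\left[Z_{0},x^{a}y^{b}\pp x\right] & = & \left(\left(a-1\right)\lambda_{1}+b\lambda_{2}\right)x^{a}y^{b}\pp x\,,\\
\left[Z_{0},x^{a}y^{b}\pp y\right] & = & \left(a\lambda_{1}+\left(b-1\right)\lambda_{2}\right)x^{a}y^{b}\pp y\,.
\end{eqnarray*}
The eigenvalue equation decouples coefficient by coefficient: $p_{a,b}=0$ unless $\left(a-1\right)\lambda_{1}+b\lambda_{2}=\delta'$, and $q_{a,b}=0$ unless $a\lambda_{1}+\left(b-1\right)\lambda_{2}=\delta'$. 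Irrationality of $\lambda$ forces each equation to have at most one solution $\left(a,b\right)\in\ww Z_{\geq 0}^{2}$, both governed by the unique pair $\left(n',m'\right)\in\ww Z_{\geq 0}^{2}$ with $n'\lambda_{1}+m'\lambda_{2}=\delta'$ --- namely $\left(a,b\right)=\left(n'+1,m'\right)$ for $p$ and $\left(n',m'+1\right)$ for $q$. A nonzero $W$ therefore requires such a pair to exist, and then $W=x^{n'}y^{m'}\left(dx\pp x+cy\pp y\right)$ for some $\left(c,d\right)\in\ww C^{2}$. Setting $\left(n,m\right):=\left(n'-N_{1},m'-N_{2}\right)\in\ww Z^{2}$ yields $\hat{Y}=x^{n}y^{m}\left(dx\pp x+cy\pp y\right)$ with $n\lambda_{1}+m\lambda_{2}=\delta$.

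Uniqueness of $\left(n,m\right)$ is again the injectivity of $\left(n,m\right)\mapsto n\lambda_{1}+m\lambda_{2}$, and $\left(c,d\right)$ is then determined by matching monomials. The only delicate point in the plan is the identification of the formal irreducible separatrices of $Z_{0}$ in the first step; once this is done, the whole argument is driven by the single arithmetic fact that $\lambda\notin\ww Q$, which makes the operator $\mathrm{ad}_{Z_{0}}$ act with one\textendash{}dimensional eigenspaces on each of the two vector\textendash{}field components.
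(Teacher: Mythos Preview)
Your proof is correct and arrives at the result by the same arithmetic mechanism as the paper --- the injectivity of $(a,b)\mapsto a\lambda_{1}+b\lambda_{2}$ on $\ww Z^{2}$ --- but the organization differs. The paper works component\textendash{}wise: writing $\hat{Y}=D\pp x+C\pp y$, each of $D,C$ satisfies $Z_{0}\ddt D=(\delta+\lambda_{1})D$ and $Z_{0}\ddt C=(\delta+\lambda_{2})C$; Lemma~\ref{lem:d-basic} (together with Remark~\ref{rem:irrat-equation-in-sep}) gives $D=x^{N}y^{M}\gamma$ with $\gamma$ a unit, and then one applies Lemma~\ref{lem:irrat-eq-h-form} to $Z_{0}\ddt\log\gamma=\mathrm{const}$ to force $\gamma\in\ww C$. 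Your route instead clears the pole of the whole vector field at once \emph{via} the formal analogue of Lemma~\ref{lem:poles_Y}, then diagonalizes $\mathrm{ad}_{Z_{0}}$ directly on the monomial basis of holomorphic vector fields. Your argument is slightly more elementary in that it bypasses the logarithm trick and the cohomological Lemma~\ref{lem:irrat-eq-h-form}; the paper's version is a bit more streamlined because the separatrix identification is already packaged in Lemma~\ref{lem:d-basic} and Remark~\ref{rem:irrat-equation-in-sep}, whereas you have to argue it separately (and your inline justification there is the one genuinely sketchy step, as you yourself flag). One small imprecision: your pair $(n',m')$ need not lie in $\ww Z_{\geq 0}^{2}$ --- one of the two coordinates may equal $-1$ when only one component of $W$ survives --- but this does not affect the conclusion since $(n,m)\in\ww Z^{2}$ is all that is claimed.
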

\begin{proof}
Write $\hat{Y}=D\pp x+C\pp y$ for some $C,\, D\in\fmero{x,y}$. Then
 :
\begin{eqnarray*}
Z_{0}\ddt D-\lambda_{1}D & = & \delta D\\
Z_{0}\ddt C-\lambda_{2}C & = & \delta C\,.
\end{eqnarray*}
According to Lemma~\ref{lem:d-basic} we can write 
\begin{eqnarray*}
D\left(x,y\right) & = & x^{N}y^{M}\gamma\left(x,y\right)\\
C\left(x,y\right) & = & x^{N'}y^{M'}\tilde{\gamma}\left(x,y\right)
\end{eqnarray*}
with either $\gamma=0$ or $\gamma\left(0,0\right)\neq0$ (and the
same for $\tilde{\gamma}$). Assume that $\gamma\neq0$ (so that $\log\gamma\in\frml{x,y}$);
we immediately obtain that
\begin{eqnarray*}
\left(\lambda_{1}N+\lambda_{2}M\right)D+x^{N}y^{M}Z_{0}\ddt\gamma & = & \left(\delta+\lambda_{1}\right)D\,,
\end{eqnarray*}
which we divide by $D$  :
\begin{eqnarray*}
Z_{0}\ddt\log\gamma & = & \left(\delta+\lambda_{1}\left(1-N\right)-\lambda_{2}M\right)\,.
\end{eqnarray*}
We now apply Lemma~\ref{lem:irrat-eq-h-form}: on the one hand 
\begin{eqnarray*}
\lambda_{1}\left(N-1\right)+\lambda_{2}M & = & \delta
\end{eqnarray*}
which determines $\left(N,M\right)$ uniquely in terms of $\left(\lambda_{1},\lambda_{2},\delta\right)$,
while on the other hand
\begin{eqnarray*}
\log\gamma & \in & \ww C\,.
\end{eqnarray*}
The same argument applies for $C$ yielding $\lambda_{1}N'+\lambda_{2}\left(M'-1\right)=\delta$,
so we conclude that:
\begin{eqnarray*}
D\left(x,y\right) & = & dx^{N-1}y^{M}\\
C\left(x,y\right) & = & cx^{N'}y^{M'-1}
\end{eqnarray*}
 with $c,d\in\ww C$. Set now $n:=N-1=N'$ and $m:=M=M'-1$.
\end{proof}

\subsection{\label{sub:irrat-rigid}Rigidification}

~

We intend to show that the power series $N$ and $T$ constructed
in Section~\ref{sub:irrat-Z-FNF} are actually convergent power series
provided the existence of a non\textendash{}degenerate Lie algebra
$\lie{Z,Y}$ of ratio $\delta$. Notice that under some arithmetic
condition on $\lambda$ (see~\cite{Ricard}), called ``Brjuno condition''
and satisfied for $\lambda$ belonging to a set of full measure in
$\ww R$, it is not necessary to assume the existence of such a $Y$
as every $Z$ is analytically conjugate to $Z_{0}$. We are not concerned
here with such considerations.

\subsubsection{\label{sub:irrat-tang-rigid}Tangential rigidity}

~

According to Lemma~\ref{lem:godbillon} there exists a closed meromorphic
1\textendash{}form $\tau_{Z}$ such that $\tau_{Z}\left(Z\right)=1$
(a closed time\textendash{}form). On the other hand the closed 1\textendash{}form
$\tau_{X}:=\frac{\dd x}{\lambda_{1}x}$ is a time\textendash{}form
for $X$. Hence 
\begin{eqnarray*}
\tau & := & \tau_{Z}-\tau_{X}
\end{eqnarray*}
 is a closed meromorphic 1\textendash{}form. Because the poles of
$\tau_{Z}$ and $\tau_{X}$ are included in $\left\{ xy=0\right\} $
(Remark~\ref{rem:irrat-equation-in-sep} and Lemma~\ref{lem:pole-is-sep})
we necessarily have that
\begin{eqnarray*}
\tau & = & \dd{\left(\frac{A\left(x,y\right)}{x^{N}y^{M}}+\alpha\log x+\beta\log y\right)}\\
 & = & \dd F
\end{eqnarray*}
with $\left(\alpha,\beta,N,M\right)\in\ww C^{2}\times\ww Z_{\geq0}^{2}$
and $A$ holomorphic near $\left(0,0\right)$. Observe that
\begin{eqnarray*}
X\ddt F & = & \tau\left(X\right)=\tau_{Z}\left(X\right)-1\\
 & = & \frac{1}{U}-1
\end{eqnarray*}
so we have built a meromorphic, multivalued solution to the equation
of tangential normalization.
\begin{lem}
\label{lem:irrat-tang-rigid}Let $\mathcal{A}$ be the linear space
over $\ww C$ of all formal expressions 
\[
\frac{A\left(x,y\right)}{x^{N}y^{M}}+\alpha\log x+\beta\log y
\]
with $A\in\frml{x,y}$ and $\left(\alpha,\beta,N,M\right)\in\ww C^{2}\times\ww Z_{\geq0}^{2}$.
Assume that $F\in\mathcal{A}$ is such that $X\ddt F=G\in\frml{x,y}$.
Then there exists $\hat{F}\in\frml{x,y}$ and a unique $\alpha\in\ww C$
such that
\begin{eqnarray*}
F\left(x,y\right) & = & \hat{F}\left(x,y\right)+G\left(0,0\right)\log y+\alpha\left(\log x-\lambda\log y\right)\,.
\end{eqnarray*}
The power series $\hat{F}-\hat{F}\left(0,0\right)$ is unique. On
the other hand every $G\in\frml{x,y}$ writes $X\ddt F$ for some
$F\in\mathcal{A}$ with $\alpha=0$. 
\end{lem}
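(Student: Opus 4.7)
The plan is to reduce the equation $X\cdot F=G$ to an equation for the linear model $Z_{0}$ via the formal linearization $\hat{\psi}^{*}X = Z_{0}$ furnished by Corollary~\ref{cor:irrat-Z-FNF}. Since $\hat{\psi}$ is tangent to the identity, I would first write $\hat{\psi}_{1}=x(1+u_{1})$ and $\hat{\psi}_{2}=y(1+u_{2})$ with $u_{i}\in\frml{x,y}$ vanishing at the origin; then $\log\hat{\psi}_{i}=\log(\text{variable})+\log(1+u_{i})$, the second summand lying in $\frml{x,y}$, while $(\hat{\psi}_{1}^{N}\hat{\psi}_{2}^{M})^{-1}$ only introduces a unit of $\frml{x,y}$ in front of $x^{-N}y^{-M}$. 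Consequently pullback stabilizes $\mathcal{A}$: $\tilde{F}:=\hat{\psi}^{*}F$ retains the shape $\tilde{A}/(x^{N}y^{M})+\alpha\log x+\beta\log y$ with the same $(\alpha,\beta)$, and $\tilde{G}:=\hat{\psi}^{*}G$ remains in $\frml{x,y}$.

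The heart of the argument is the analysis of $Z_{0}\cdot\tilde{F}=\tilde{G}$. Expanding monomial by monomial yields
\[
Z_{0}\cdot\tilde{F} = \sum_{a,b\geq0}\bigl(\lambda_{1}(a-N)+\lambda_{2}(b-M)\bigr)\tilde{a}_{a,b}\,x^{a-N}y^{b-M} + \alpha\lambda_{1}+\beta\lambda_{2}.
\]
For the left-hand side to lie in $\frml{x,y}$, every polar monomial (those with $a<N$ or $b<M$) must carry a vanishing coefficient. Because $\lambda\notin\ww Q$, the only integer solution of $\lambda_{1}(a-N)+\lambda_{2}(b-M)=0$ is $(a,b)=(N,M)$, which lies \emph{outside} the polar range. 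Thus all polar coefficients of $\tilde{A}$ vanish, $\tilde{A}$ is divisible by $x^{N}y^{M}$, and $\tilde{F}$ reduces to $\hat{F}'+\alpha\log x+\beta\log y$ with $\hat{F}'\in\frml{x,y}$. Evaluating $Z_{0}\cdot\tilde{F}=\tilde{G}$ at the origin, where $Z_{0}$ vanishes, pins down the single linear constraint $\alpha\lambda_{1}+\beta\lambda_{2}=G(0,0)$; eliminating $\beta$ produces exactly the asserted splitting of the $\log$-part into the $G(0,0)$-term plus a free multiple of $\log x-\lambda\log y$.

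To return to the $X$-coordinates I would apply $(\hat{\psi}^{-1})^{*}$ and note that each $\log$ term picks up a $\frml{x,y}$ correction (via the same computation as in the first paragraph) which is absorbed into a new formal power series $\hat{F}$. Uniqueness of $\alpha$ then follows from the transcendence of $\log x$ and $\log y$ over the field of formal Laurent series; uniqueness of $\hat{F}$ modulo constants follows by applying Lemma~\ref{lem:irrat-eq-h-form} to $X\cdot(\hat{F}_{1}-\hat{F}_{2})=0$. For the converse existence statement, given $G\in\frml{x,y}$ I would take $F_{1}$ to be the multiple of $\log y$ that makes $(G-X\cdot F_{1})(0,0)=0$ -- using $X\cdot\log y=\lambda_{2}+R\in\frml{x,y}$ -- and then invoke Lemma~\ref{lem:irrat-eq-h-form} to produce a formal power series $F_{2}$ with $X\cdot F_{2}=G-X\cdot F_{1}$; then $F:=F_{1}+F_{2}$ lies in $\mathcal{A}$ with $\alpha=0$.

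The principal obstacle is the diagonal vanishing argument for the polar part in $Z_{0}$-coordinates: it critically exploits the small-divisor-free structure of the quasi-resonant setting. Any tentative direct attack on $X$ would force a delicate induction on total degree, where the $R\,y\partial_{y}$ perturbation mixes polar with non-polar monomials; transferring the problem to $Z_{0}$ is precisely what decouples the monomials and makes the argument transparent. This is also the point where the hypothesis $\lambda\notin\ww Q$ is indispensable -- for $\lambda\in\ww Q$, the kernel of $Z_{0}\cdot$ on polar monomials becomes infinite dimensional and the statement would need to be modified with additional logarithmic terms.
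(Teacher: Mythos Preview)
Your argument is correct but follows a genuinely different route from the paper's. The paper attacks $X\cdot F=G$ head--on: it multiplies through by $x^{N}y^{M}$, writes out the coefficient recursion for $A=\sum f_{a,b}x^{a}y^{b}$, and shows by a double induction (first on $a<N$ for all $b$, then on $b<M$ for all $a$) that every polar coefficient $f_{a,b}$ vanishes, using only $r_{0,0}=0$ and $\lambda\notin\ww Q$. You instead pull back by the formal transverse normalization to reduce to the diagonal operator $Z_{0}$, where the vanishing of the polar part is a one--line computation. Both arguments invoke the non--resonance $\lambda_{1}(a-N)+\lambda_{2}(b-M)\neq0$ at the same crucial step; the paper's version is more self--contained (it does not presuppose the formal linearization and would work even if $X$ were not known to be formally linearizable), whereas yours is conceptually cleaner and explains \emph{why} the paper's induction closes. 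Your final paragraph somewhat undersells the direct approach: the ``delicate induction'' you warn against is exactly what the paper carries out, and it is short.

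Two small points worth tightening. First, ``tangent to the identity'' alone does not give $\hat{\psi}_{i}=(\mbox{variable})\cdot(1+u_{i})$; you need that $\hat{\psi}$ preserves the coordinate axes. This is immediate for the explicit transverse normalization $\psi_{N}(x,y)=(x,ye^{N})$, or follows from the fact that any formal conjugacy between $X$ and $Z_{0}$ tangent to the identity must match up their separatrices $\{x=0\},\{y=0\}$. Second, your computation yields $\alpha\lambda_{1}+\beta\lambda_{2}=G(0,0)$, hence the coefficient of $\log y$ in the splitting is $G(0,0)/\lambda_{2}$; the statement as written seems to tacitly normalize $\lambda_{2}=1$, so do not be troubled by the apparent discrepancy.
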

Before giving the proof of this lemma we conclude our discussion about
tangential conjugacy. Since $\tau=\dd F$ and $F,\, T\in\mathcal{A}$
with $X\ddt F=X\ddt T\in\frml{x,y}$ we obtain
\begin{eqnarray*}
T & = & \hat{F}-\hat{F}\left(0,0\right)\,.
\end{eqnarray*}
Hence $T$ is a convergent power series since $F$ is a convergent
object. What we have in fact proved is the following:
\begin{prop}
\label{pro:irrat-tang-rigid} Assume that $Z$ admits a closed time\textendash{}form
whose polar locus is included in $\left\{ xy=0\right\} $. Then $T$
is a convergent power series.\end{prop}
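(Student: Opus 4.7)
The plan is to exploit the hypothesized closed time-form of $Z$ to produce a convergent solution to the tangential equation $X \cdot \bullet = 1/U - 1$, and then appeal to the formal uniqueness furnished by Lemma~\ref{lem:irrat-eq-h-form} to identify the formal series $T$ with this convergent solution up to a constant.

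First, I would form the 1-form $\tau := \tau_Z - \tau_X$, where $\tau_X := \dd x/(\lambda_1 x)$ is an explicit closed time-form for $X$. Both summands are closed and meromorphic, and their polar loci lie in $\{xy=0\}$ by hypothesis on $\tau_Z$ and by inspection of $\tau_X$; hence so does the polar locus of $\tau$. Since $Z = UX$ with $U(0,0)=1$ and $\tau_Z(Z)=1$, one has $\tau_Z(X) = 1/U$, so $\tau(X) = 1/U - 1 \in \germ{x,y}$ vanishes at the origin.

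Second, I would invoke the standard local description of closed meromorphic 1-forms with poles on the normal crossings divisor $\{xy=0\}$ to write $\tau = \dd F$ with $F \in \mathcal{A}$, the non-logarithmic coefficients of $F$ inheriting convergence from $\tau$. Applying Lemma~\ref{lem:irrat-tang-rigid} to this $F$ with $G := 1/U - 1 \in \germ{x,y}$ decomposes $F$ as $\hat{F}(x,y) + \alpha(\log x - \lambda \log y)$ with $\hat{F} \in \frml{x,y}$, and the convergence just established transfers to $\hat{F} \in \germ{x,y}$.

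Finally, both $T$ and $\hat{F}$ satisfy $X \cdot \bullet = 1/U - 1$, so $X \cdot (T - \hat{F}) = 0$; by Lemma~\ref{lem:irrat-eq-h-form}, $T - \hat{F}$ is a constant, and therefore $T \in \germ{x,y}$. The main obstacle I anticipate is the explicit exactness of $\tau$ in $\mathcal{A}$ together with the preservation of convergence when extracting the pure power series part $\hat{F}$ from $F$. This is however a direct polydisc computation: one integrates the Laurent expansion of $\tau$ along each coordinate axis, the convergent negative-index pieces yielding the meromorphic part of $\hat{F}$ and the residues contributing the logarithmic terms, all while staying within the convergent category.
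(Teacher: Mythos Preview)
Your proposal is correct and follows essentially the same route as the paper: form $\tau=\tau_Z-\tau_X$, write it as $\dd F$ with $F\in\mathcal{A}$ using the normal-crossings description of closed meromorphic $1$\textendash{}forms, and then invoke Lemma~\ref{lem:irrat-tang-rigid} to extract a convergent power series $\hat F$ solving $X\cdot\hat F=\nf1U-1$, whence $T=\hat F-\hat F(0,0)$. The only cosmetic difference is that you appeal to Lemma~\ref{lem:irrat-eq-h-form} for the final uniqueness step, whereas the paper uses the uniqueness clause already built into Lemma~\ref{lem:irrat-tang-rigid}; the anticipated ``obstacle'' about convergence of $\hat F$ is indeed harmless, since the proof of that lemma shows $A=x^Ny^M\tilde A$ with $\tilde A\in\frml{x,y}$, and $A$ holomorphic forces $\tilde A=\hat F$ to converge.
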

\begin{cor}
\label{cor:irrat-tang-rigid}There exists a closed time\textendash{}form
for $Z$ if, and only if, $Z$ is analytically conjugate to $X$ by
a tangential change of coordinates.
\end{cor}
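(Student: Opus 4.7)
The plan is to derive the corollary as a direct consequence of Proposition~\ref{pro:irrat-tang-rigid}: the only gap is that Proposition~\ref{pro:irrat-tang-rigid} additionally assumes the polar locus of the time-form is contained in $\{xy=0\}$, so I need to show this hypothesis is automatic in the quasi-resonant setting.

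For the nontrivial implication ($\Rightarrow$), I would first argue that any closed meromorphic time-form $\tau$ for $Z$ has its polar locus contained in $\{xy=0\}$. The argument mimics Lemma~\ref{lem:pole-is-sep}: an irreducible component of the polar divisor of $\tau$ that were transverse to $Z$ at a generic smooth point would produce a pole of $\tau(Z)$ there, contradicting $\tau(Z)=1$. Hence every polar component is a separatrix of $Z$, and Remark~\ref{rem:irrat-equation-in-sep} forces each such separatrix to be a branch of $\{xy=0\}$. Proposition~\ref{pro:irrat-tang-rigid} then applies and delivers the convergence of the formal power series $T$ built in Section~\ref{sub:irrat-Z-FNF}, making $\mathcal{T}:=\flw XT{}$ an honest analytic tangential change of coordinates; by Proposition~\ref{pro:chg_coord}(3) it satisfies $\mathcal{T}^{*}X=Z$.

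For the converse ($\Leftarrow$), I would simply pull back the obvious closed time-form $\tau_{X}:=\frac{\dd x}{\lambda_{1}x}$ for $X$: if $\mathcal{T}^{*}X=Z$ with $\mathcal{T}$ analytic, then $\mathcal{T}^{*}\tau_{X}$ is a closed meromorphic $1$-form near $(0,0)$ and
\begin{eqnarray*}
\left(\mathcal{T}^{*}\tau_{X}\right)(Z) & = & \tau_{X}\left(D\mathcal{T}\cdot Z\right) \;=\; \tau_{X}\left(X\circ\mathcal{T}\right) \;=\; 1\,,
\end{eqnarray*}
so $\mathcal{T}^{*}\tau_{X}$ is the desired closed time-form for $Z$.

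I expect no serious obstacle: the analytic work is already packaged in Lemma~\ref{lem:irrat-tang-rigid} and Proposition~\ref{pro:irrat-tang-rigid}, and the only fresh input needed is the short polar-set transversality argument sketched above, which is essentially a repetition of the reasoning behind Lemma~\ref{lem:pole-is-sep}.
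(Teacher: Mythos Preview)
Your overall strategy matches the paper's exactly: for ($\Leftarrow$) pull back $\tau_{X}$, and for ($\Rightarrow$) reduce to Proposition~\ref{pro:irrat-tang-rigid} by showing that the polar locus of any closed time\textendash{}form $\tau$ is a separatrix of $Z$, hence contained in $\{xy=0\}$ by Remark~\ref{rem:irrat-equation-in-sep}.

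However, your justification of that polar\textendash{}locus step is not correct as written. You claim that a polar component transverse to $Z$ ``would produce a pole of $\tau(Z)$''. This is false without using closedness: near a point where $Z=\pp u$ and the putative polar component is $\{u=0\}$, the form $\tau=\dd u+\frac{1}{u}\,\dd v$ satisfies $\tau(Z)=1$ yet has a pole along $\{u=0\}$. So $\tau(Z)=1$ alone does not rule out transverse poles, and Lemma~\ref{lem:pole-is-sep} (which concerns $\det(Z,Y)$, not an arbitrary $1$\textendash{}form) is not the right model here.

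The paper fills this gap by an explicit computation that \emph{does} use $\dd\tau=0$: writing $\tau=\frac{1}{U}\tau_{X}+\frac{f}{\frak p}\,(A\,\dd y-B\,\dd x)$ with $f,\frak p$ coprime and expanding $\dd\tau=0$ yields that $Z\cdot\frak p$ vanishes on $\{\frak p=0\}$, so $\frak p$ is a separatrix. Your argument can be repaired along the same lines, or more simply: rectify $Z=\pp u$ locally, so $\tau(Z)=1$ forces $\tau=\dd u+\beta\,\dd v$, and then $\dd\tau=0$ gives $\partial_{u}\beta=0$, i.e.\ $\beta=\beta(v)$; hence any pole of $\tau$ lies in a level set $\{v=\mathrm{const}\}$, which is a leaf of $Z$. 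Either way, closedness is the essential missing ingredient.
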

This result proves Theorem~\ref{thm:rigid-tang} for quasi\textendash{}resonant
vector fields (setting $Q:=1$).
\begin{proof}
Because $\dd{\tau_{X}}=0$ the existence of an analytical conjugacy
gives rise to the closed time\textendash{}form $\tau:=\psi^{*}\tau_{X}$
for $Z$. Assume conversely that there exists some $\tau$ with $\tau\left(Z\right)=1$
and $\dd{\tau}=0$. To apply the previous proposition we only need
to show that the polar locus of $\tau$ is empty or a separatrix of
$Z$. Since $Z=UX=A\pp x+B\pp y$ we can write
\begin{eqnarray*}
\tau & = & \frac{1}{U}\tau_{X}+\frac{f}{\frak{p}}\left(A\dd y-B\dd x\right)
\end{eqnarray*}
with $f,\,\frak{p}$ coprime and holomorphic. The case $\frak{p}\left(0,0\right)\neq0$
is trivial so we may as well assume $\left\{ \frak{p}=0\right\} \neq\emptyset$.
Taking into account that $\tau$ and $\tau_{X}$ are closed we derive
the relations
\begin{eqnarray*}
0=\dd{\tau} & = & \left(Z\cdot\frac{f}{\frak{p}}+\frac{f}{\frak{p}}\tx{div}Z\right)\dd x\wedge\dd y-\frac{\dd U}{U^{2}}\wedge\tau_{X}\\
0 & = & \frak{p}Z\cdot f-fZ\cdot\frak{p}+\frak{p}f\tx{div}Z-\frac{\frak{p}^{2}}{\lambda_{1}xU^{2}}\frac{\partial U}{\partial y}\,.
\end{eqnarray*}
From this we deduce that $Z\cdot\frak{p}$ vanishes along the curve
$\left\{ \frak{p}=0\right\} $, meaning that $\frak{p}$ is a separatrix
of $Z$.
\end{proof}
We now give the proof of Lemma~\ref{lem:irrat-tang-rigid}.
\begin{proof}
Write 
\begin{eqnarray*}
F\left(x,y\right) & = & \frac{A\left(x,y\right)}{x^{N}y^{M}}+\alpha\left(\log x-\lambda\log y\right)+\beta\log y
\end{eqnarray*}
with $\left(\alpha,\beta\right)\in\ww C^{2}$ and 
\begin{eqnarray*}
A\left(x,y\right) & = & \sum_{a,b\geq0}f_{a,b}x^{a}y^{b}\,.
\end{eqnarray*}
The equality $X\ddt F=G$ can be rewritten in the following way  :
\begin{eqnarray}
X\ddt A-A\left(N\lambda\left(1+R\right)+M\right) & = & x^{N}y^{M}\left(G-\beta-\alpha R\right)\,.\label{eq:irrat-eq-h-mero}
\end{eqnarray}
The coefficient $h_{a,b}$ of $x^{a}y^{b}$ in the previous equality
is given by
\begin{eqnarray}
\left(\left(a-N\right)\lambda_{1}+\left(b-M\right)\lambda_{2}\right)f_{a,b}-N\lambda_{2}\sum_{c=0}^{a}\sum_{d=0}^{b}f_{c,d}r_{a-c,b-d} & = & h_{a,b}\label{eq:irrat-eq-h-coef}
\end{eqnarray}
where $R\left(x,y\right)=\sum_{a+b>0}r_{a,b}x^{a}y^{b}$. Obviously
$h_{a,b}=0$ if $a<N$ or $b<M$. Reasoning by induction on $a<N$
we show that $f_{a,b}=0$ for all $b$. Indeed assume that $f_{c,d}=0$
for all $c\leq a$ and $d<b$ (or $d\in\ww Z_{\geq0}$ when $c<a$).
Because $r_{0,0}=0$ and $\lambda\notin\ww Q$ the expression~(\ref{eq:irrat-eq-h-coef})
implies that $f_{a,b}=0$. The same argument proves also that $f_{a,b}=0$
if $b<M$ for all $a$. As a consequence there exists some $\tilde{A}\in\frml{x,y}$
such that
\begin{eqnarray*}
A & = & x^{N}y^{M}\tilde{A}\,.
\end{eqnarray*}

Equation~(\ref{eq:irrat-eq-h-mero}) divided by $x^{N}y^{M}$ and
evaluated at $\left(0,0\right)$ yields $\beta=G\left(0,0\right)$.
The remaining of the claim is now clear.
\end{proof}

\subsubsection{\label{sub:irrat-norm-rigid}Transversal rigidity}

~

Corollary~\ref{cor:irrat-tang-rigid} together with Lemma~\ref{lem:godbillon}
allow us to restrict our study to Lie algebras $\mathcal{L}\left(Z,Y\right)$
of ratio $\delta$  with
\begin{eqnarray*}
Z & =X= & Z_{0}+RY_{0}
\end{eqnarray*}
where $Y_{0}=y\pp y$. We know that the formal power series $N$ satisfies
\begin{eqnarray*}
X\ddt N & = & -R\,.
\end{eqnarray*}
In order to apply Lemma~\ref{lem:tr-homog-rigid} we only need to
know that $Y_{0}\ddt N$ is a convergent power series. When this is
the case we deduce that $N$ is convergent because $X\trs Y_{0}$.
\begin{lem}
\label{lem:irrat-norm-rigid}Assume that $\hat{Y}\trs Y_{0}$. Then
$Y_{0}\ddt N$ is a convergent power series.\end{lem}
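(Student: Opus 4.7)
The plan is to show that $M := 1 + Y_0 \cdot N$ is itself meromorphic---in fact equal to an explicit rational expression in the data of $(Z, Y)$---rather than invoking Lemma~\ref{lem:tr-homog-rigid} on $Y_0 \cdot N$ directly. Since $M$ is a priori a formal power series with $M(0,0)=1$, and a formal power series equal to a meromorphic germ is necessarily holomorphic at the origin, this immediately yields that $Y_0 \cdot N = M-1$ is a convergent power series.

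The starting point is Proposition~\ref{pro:irrat-form-aff-lie}: the formal pullback $\hat{Y} = \psi_{N}^{*}Y$ takes the shape $x^{n}y^{m}(dx\partial_{x}+cy\partial_{y})$ for some $(c,d)\in\ww C^{2}$. The hypothesis $\hat{Y}\trs Y_{0}$ reads $d\neq 0$, while the non-degeneracy $Z\trs Y$ transports through $\psi_{N}$ to $Z_{0}\trs\hat{Y}$, amounting to $c\lambda_{1}\neq d\lambda_{2}$.

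I next write $Y = P\partial_{x}+Q\partial_{y}$ and $\psi_{N}(x,y)=(x,ye^{N})$, and unpack the conjugacy identity $\hat{Y}\circ\psi_{N} = D\psi_{N}\cdot Y$ component by component. Eliminating the exponential $e^{mN}$ between the two scalar equations produces the single relation
\[
MQ\cdot dx \;=\; yP\,\bigl(c - dx\,\partial_{x}N\bigr),
\]
in which $\partial_{x}N$ is the only surviving formal datum besides $M$. The tangential equation $X\cdot N = -R$ then permits $\lambda_{1}x\,\partial_{x}N$ to be substituted as an explicit affine function of $M$. After expansion the coefficient of $M$ organises itself precisely as $d\det(X,Y) = d\bigl(\lambda_{1}xQ-(\lambda_{2}+R)yP\bigr)$, yielding
\[
M\cdot d\det(X,Y) \;=\; yP\,(c\lambda_{1}-d\lambda_{2}).
\]
Since $\det(X,Y)\not\equiv 0$ by the non-degeneracy hypothesis $X\trs Y$, solving for $M$ exhibits it as a meromorphic germ, whence the lemma.

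The main obstacle is the algebraic verification at the substitution step that the coefficient of $M$ really is $d\det(X,Y)$: this is a routine computation, but the emergence of the tangency determinant of $(X,Y)$ is exactly what lets the non-degeneracy assumption enter. Both transversality conditions $d\neq 0$ and $c\lambda_{1}\neq d\lambda_{2}$ play a role: the former keeps the $P$-equation informative (otherwise $P$ would be supported along a separatrix direction), the latter prevents the right-hand side of the formula for $M$ from vanishing identically.
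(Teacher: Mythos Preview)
Your argument is correct, and it reaches the same endpoint as the paper---namely an explicit meromorphic expression for $M=1+Y_{0}\cdot N$---but via a different route. The paper proceeds more abstractly: it writes $Y_{0}=aZ_{0}+b\hat{Y}$ in the basis $(Z_{0},\hat{Y})$, observes (from the absence of nonconstant meromorphic first integrals of $Z_0$, together with the explicit form of $\hat Y$) that the coefficient $a$ is a nonzero \emph{constant}, and then pulls this identity back through $\mathcal{N}$ using Proposition~\ref{pro:chg_coord}. Since $\mathcal{N}^{*}Y_{0}=\frac{1}{M}Y_{0}$ while $\mathcal{N}^{*}(Z_{0},\hat{Y})=(Z,Y)$, comparing the $Z$\textendash{}component in the basis $(Z,Y)$ yields $M=\alpha/a$ with $\alpha$ the meromorphic $Z$\textendash{}coefficient of $Y_{0}$. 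Your computation unpacks this same identity in coordinates: writing out $D\psi_{N}\cdot Y=\hat{Y}\circ\psi_{N}$, eliminating $e^{mN}$, and substituting the tangential equation $X\cdot N=-R$ is exactly what produces the coefficient $a=d/(d\lambda_{2}-c\lambda_{1})$ and recovers the same formula $M=yP(c\lambda_{1}-d\lambda_{2})/\bigl(d\det(X,Y)\bigr)$. The paper's version is shorter and makes the role of the pullback formalism transparent; yours has the merit of being fully explicit and of showing concretely where each transversality hypothesis ($d\neq 0$, $c\lambda_{1}\neq d\lambda_{2}$, $\det(X,Y)\not\equiv 0$) enters the algebra. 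One small remark: your opening sentence suggests the paper might appeal to Lemma~\ref{lem:tr-homog-rigid} \emph{inside} this proof, but in fact that lemma is only invoked \emph{after} the present one to pass from convergence of $Y_{0}\cdot N$ to convergence of $N$ itself; the paper's proof of the lemma, like yours, establishes meromorphy of $M$ directly.
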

\begin{proof}
Write $Y_{0}=aZ_{0}+b\hat{Y}$ for some meromorphic functions $a\neq0$
and $b$. The discussion in Section~\ref{sec:mero-first-integ-sepx}
yields $a\in\ww C_{\neq0}$. Since $\mathcal{N}=\flw{Y_{0}}N{}$ and
$\mathcal{N}^{*}\left(Z_{0},\hat{Y}\right)=\left(Z,Y\right)$ we deduce
from Proposition~\ref{pro:chg_coord} that
\begin{eqnarray*}
\mathcal{N}^{*}Y_{0} & = & \frac{1}{1+Y_{0}\ddt N}Y_{0}\\
 & = & aZ+b\circ\mathcal{N}Y\,.
\end{eqnarray*}
As a consequence $a\left(1+Y_{0}\ddt N\right)$ is convergent because
$Z\trs Y$. Since $a\neq0$ it follows that $Y_{0}\ddt N$ is convergent.\end{proof}
\begin{cor}
\label{cor:irrat-norm-rigid}The pair $\left(X,Y\right)$ is analytically
conjugate to $\left(Z_{0},\hat{Y}\right)$.\end{cor}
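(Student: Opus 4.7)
The plan is to observe that this corollary is essentially a packaging of the rigidity Lemma~\ref{lem:tr-homog-rigid} applied to the vector fields $X$ and $Y_{0}$, together with the convergence statement of the preceding Lemma~\ref{lem:irrat-norm-rigid}. Concretely, I first check that the hypotheses of Lemma~\ref{lem:tr-homog-rigid} are met for the formal power series $N$ constructed in Section~\ref{sub:irrat-Z-FNF}. The transversality $X\trs Y_{0}$ is immediate since $\det(X,Y_{0}) = \det(Z_{0}+RY_{0},\,y\pp y) = \lambda_{1}xy\not\equiv 0$. The derivative along $X$ is $X\cdot N = -R$ by the defining equation of $N$, which is holomorphic by construction. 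The derivative along $Y_{0}$ is convergent by Lemma~\ref{lem:irrat-norm-rigid}, whose hypothesis $\hat{Y}\trs Y_{0}$ is implicit in the present setting (it corresponds to case~(1) of Theorem~\ref{thm:rigid-tr}, to which this section is devoted).

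With both $X\cdot N$ and $Y_{0}\cdot N$ meromorphic (in fact holomorphic) on a common neighborhood of $(0,0)$ and $X\trs Y_{0}$, Lemma~\ref{lem:tr-homog-rigid} yields convergence of $N$ itself. The change of coordinates $\mathcal{N}:=\flw{Y_{0}}N{}$ is then a genuine biholomorphism germ at $(0,0)$, tangent to the identity. By the very construction recalled at the start of Section~\ref{sub:irrat-Z-FNF} and the second item of Proposition~\ref{pro:chg_coord} (applied with $Y=Y_{0}$, $D=0$), the equation $X\cdot N=-R$ translates into $\mathcal{N}^{*}Z_{0}=X$.

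It remains to transport $Y$. Since $X=Z$ throughout Section~\ref{sub:irrat-norm-rigid}, the transverse normalization $\mathcal{N}$ plays the role of the (now analytic) conjugacy $\hat{\psi}^{-1}$ between $X$ and $Z_{0}$. The defining identity $\hat{Y}:=\hat{\psi}^{*}Y$ therefore becomes $\mathcal{N}^{*}\hat{Y}=Y$, so that $\mathcal{N}$ conjugates the pair $(Z_{0},\hat{Y})$ to $(X,Y)$ analytically, as claimed.

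There is no genuine obstacle left: all the real content has been absorbed into Lemma~\ref{lem:irrat-norm-rigid} (where the transverse hypothesis $\hat{Y}\trs Y_{0}$ is leveraged via Proposition~\ref{pro:chg_coord} to extract convergence of $Y_{0}\cdot N$ out of the analyticity of $a(1+Y_{0}\cdot N)$) and into the elementary but decisive Lemma~\ref{lem:tr-homog-rigid}. The only points that require care in the write-up are the direction of the various conjugacies (so as not to confuse $\mathcal{N}$ with $\mathcal{N}^{-1}$) and the verification that the transversality $X\trs Y_{0}$ holds at the formal level where $R$ a priori lives, which is unproblematic since $R(0,0)=0$.
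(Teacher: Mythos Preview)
Your argument correctly handles the case $\hat{Y}\trs Y_{0}$, but there is a genuine gap: the corollary is stated \emph{unconditionally}, and the hypothesis $\hat{Y}\trs Y_{0}$ is \emph{not} implicit in the setting. From Proposition~\ref{pro:irrat-form-aff-lie} one has $\hat{Y}=x^{n}y^{m}\left(dx\pp x+cy\pp y\right)$, and $\hat{Y}\trs Y_{0}$ fails precisely when $d=0$. Nothing in Section~\ref{sec:irrat} rules this out, and Lemma~\ref{lem:irrat-norm-rigid} explicitly assumes $\hat{Y}\trs Y_{0}$, so you cannot invoke it in the degenerate case. Your appeal to case~(1) of Theorem~\ref{thm:rigid-tr} is misplaced: that dichotomy concerns resonant singularities, whereas Theorem~\ref{thm:irrat-th} asserts that in the quasi\textendash{}resonant case the analytic conjugacy \emph{always} exists, regardless of whether $\hat{Y}\trs Y_{0}$.

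The paper supplies the missing idea: when $d=0$, so that $\hat{Y}$ is collinear with $Y_{0}=y\pp y$, one exploits the symmetry between the two separatrices and replaces $Y_{0}$ by $Y_{1}:=x\pp x$, which also commutes with $Z_{0}$ and is now transverse to $\hat{Y}$. Rewriting $Z=\tilde{U}\tilde{X}$ with $\tilde{X}=Z_{0}+\lambda_{1}\tilde{R}Y_{1}$ (just as in the preparation step, with the roles of $x$ and $y$ exchanged), one reruns the whole argument with $Y_{1}$ in place of $Y_{0}$, and Lemma~\ref{lem:irrat-norm-rigid} applies since $\hat{Y}\trs Y_{1}$. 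Without this swap, your proof is incomplete.
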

\begin{proof}
If $\hat{Y}\trs Y_{0}$ then the previous lemma proves the claim.
Conversely assume that 
\begin{eqnarray*}
\hat{Y} & = & cx^{n-1}y^{m}\pp y\,\,\,,\, c\in\ww C.
\end{eqnarray*}
Let $Y_{1}:=x\pp x$ which is transverse to $\hat{Y}$ and commutes
to $Z_{0}$. By repeating the construction at the beginning of the
section, we can write
\begin{eqnarray*}
Z & = & \tilde{U}\tilde{X}\\
\tilde{X} & = & Z_{0}+\lambda_{1}\tilde{R}Y_{1}
\end{eqnarray*}
with $\tilde{R}$ holomorphic and $\tilde{R}\left(0,0\right)=0$.
The conclusion follows.
\end{proof}

\section{\label{sec:res-sad}Resonant singularities}

The spirit of this section is the same as the previous one's. Here
we assume that $\lambda:=\lambda_{1}/\lambda_{2}=-p/q\in\ww Q_{\leq0}$
with $p\wedge q=1$ if $p\neq0$ and $q:=1$ if $p=0$. For $\left(k,\mu\right)\in\ww N_{}\times\ww C$
set 
\begin{eqnarray*}
u\left(x,y\right) & := & x^{q}y^{p}\\
W_{0}\left(x,y\right) & := & \lambda_{1}x\pp x+\lambda_{2}y\pp y\\
X_{0}\left(x,y\right) & := & u^{k}x\pp x+\left(1+\mu u^{k}\right)W_{0}\left(x,y\right)\,.
\end{eqnarray*}

\begin{thm}
\label{thm:res-sad-final} Assume that $\lie{Z,Y}$ is a non\textendash{}degenerate
Lie algebra of ratio $\delta$ with $\lambda\in\ww Q_{\leq0}$ and
such that $Z$ is not formally linearizable. Then there exists a unique
$\left(n,m\right)\in\ww Z^{2}$ such that $\left(Z,Y\right)$ is analytically
conjugate to $\left(\tilde{Z},\tilde{Y}\right)$ given by:
\begin{eqnarray*}
\tilde{Y} & = & x^{n}y^{m}\left(Q\circ u\right)^{\gamma}\left(c\left(Q\circ u\right)X_{0}+dW_{0}\right)\\
\tilde{Z} & = & \left(Q\circ u\right)X_{0}+\left(P\circ u\right)x^{n}y^{m}W_{0}\\
\gamma & = & -\frac{\delta\mu+\lambda_{2}n}{qk\lambda_{2}}\\
\delta & = & n\lambda_{1}+m\lambda_{2}
\end{eqnarray*}
where $P$ is a polynomial of degree at most $k-1$ and $Q$ at most
$k$, with $Q\left(0,0\right)=1$, and $\left(c,d\right)\in\ww C^{2}$.
The polynomial $P$ vanishes if $c\neq0$ or $\delta=0$, but is otherwise
unspecified. Moreover
\begin{itemize}
\item if $\delta\neq0$ then $Q\left(u\right)=1-\left(\lambda_{2}\frac{n}{\delta}+\mu\right)u^{k}$
and $\left(n,m\right)\notin\left(q,p\right)\ww Z$,
\item if $\delta=0$ then $Q$ is unspecified and $n=m=0$.
\end{itemize}
These expressions are unique up to diagonal changes of coordinates
$\left(x,y\right)\mapsto\left(\alpha x,\beta y\right)$ with $u\left(\alpha,\beta\right)^{k}=1$.
\end{thm}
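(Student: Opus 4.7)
The plan is to apply the general four--step scheme \emph{Preparation~/~Formal normalization~/~Rigidification~/~Reduction} announced in the introduction, specialized to the resonant class $\lambda = -p/q$. First I would use the Camacho--Sad existence of separatrices together with a Dulac prenormalization to put $Z$ in the form
\begin{eqnarray*}
Z & = & U\,\left(X_{0} + R\, Y_{0}\right)
\end{eqnarray*}
with $Y_{0} = y\pp y$, $U(0,0) = 1$, $R(0,0) = 0$, and $X_{0}$ the Dulac--Poincaré polynomial attached to the analytic invariants $(k,\mu)$ of $Z$. A useful observation at this stage is that $[X_{0},W_{0}] = 0$ and $W_{0}\ddt u = 0$, while $X_{0}\ddt u = q\,u^{k+1}$, so that the resonant monomial $u$ controls the whole non--linearizable part of the dynamics.

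The core of the argument is the formal normalization. Following Proposition~\ref{pro:chg_coord}, conjugating $Z$ to a polynomial target by $\flw{X_{0}}{T}{}\circ\flw{Y_{0}}{N}{}$ reduces to solving two homological equations of shape $X_{0}\ddt F = G$. Contrary to the quasi--resonant case handled in Lemma~\ref{lem:irrat-eq-h-form}, the operator $X_{0}\ddt$ now has a large cokernel on $\frml{x,y}$: by the resonance $q\lambda_{1} + p\lambda_{2} = 0$ it misses every polynomial in $u$ and every $x^{n}y^{m}u^{j}$, where $(n,m)$ is any solution of $\delta = n\lambda_{1} + m\lambda_{2}$. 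The right strategy is therefore to allow the target to absorb a polynomial--in--$u$ obstruction class, yielding a formal conjugate of the shape
\begin{eqnarray*}
\tilde{Z} & = & (Q\circ u)\, X_{0} + (P\circ u)\, x^{n}y^{m}\, W_{0}
\end{eqnarray*}
with $Q(0) = 1$, $\deg Q \leq k$ and $\deg P \leq k-1$. The resonant analogue of Proposition~\ref{pro:irrat-form-aff-lie} — obtained by writing $\hat{Y} = D\pp x + C\pp y$, using Lemmas~\ref{lem:d-basic} and~\ref{lem:poles_Y} to locate the poles on $\left\{ xy = 0\right\}$, then integrating $\left[\tilde{Z},\hat{Y}\right] = \delta\hat{Y}$ component by component thanks to $[X_{0},W_{0}] = 0$ — produces the stated monomial prefactor $x^{n}y^{m}$, the resonance identity $\delta = n\lambda_{1}+m\lambda_{2}$, and the exponent $\gamma = -(\delta\mu+\lambda_{2}n)/(qk\lambda_{2})$. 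The compatibility of such a $\hat{Y}$ with a nonzero obstruction $P$ forces simultaneously $c = 0$ and $\delta\neq 0$, which explains the dichotomy in the statement. This step is the principal obstacle: one must check that the cokernel is \emph{exactly} captured by the pair $(Q,P)$ rather than a strictly larger space, which is what forces the degree bounds and the closed formula for $Q$.

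Once the formal side is settled, rigidification is an application of Lemma~\ref{lem:tr-homog-rigid}: the formal series $T$ and $N$ satisfy $X_{0}\ddt(\ddt) \in \mero{x,y}$ by construction, and the hypothesis $Z\trs Y$ together with Proposition~\ref{pro:chg_coord} furnishes a transverse meromorphic equation $\hat{Y}\ddt(\ddt) \in \mero{x,y}$, exactly as in the proof of Lemma~\ref{lem:irrat-norm-rigid}. Since $X_{0}\trs \hat{Y}$ generically, the lemma yields convergence of both $T$ and $N$, hence holomorphy of the whole conjugacy $\hat{\psi}$. The final reduction to a unique representative is carried out by exploiting isotropies: the flows of $X_{0}$ and $\hat{Y}$ rescale $Q$ and absorb the lower--order terms in $P$, forcing the closed formula $Q(u) = 1 - (\lambda_{2}n/\delta + \mu)u^{k}$ when $\delta\neq 0$ and $(n,m)\notin(q,p)\ww Z$ (and leaving $Q$ free when $\delta = 0$, in which case necessarily $n = m = 0$). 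The residual ambiguity is exactly the finite group of diagonal rescalings $(x,y)\mapsto(\alpha x,\beta y)$ with $u(\alpha,\beta)^{k} = 1$, proving the uniqueness clause.
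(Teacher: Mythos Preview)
Your rigidification step contains a genuine gap that, if it worked as written, would actually contradict the theorem. You claim that Lemma~\ref{lem:tr-homog-rigid} gives convergence of the formal transverse normalizer $N$ because $X_{0}\trs\hat Y$. But if $N$ converged, then $Z$ would be analytically conjugate to $Z_{0}=(Q\circ u)X_{0}$ with no $P$\textendash{}term at all, and the theorem asserts precisely that this fails when $c=0$ and $\delta\neq 0$. The obstruction polynomial $P$ is not something to be absorbed afterwards by isotropies; it records the generic \emph{divergence} of $N$ in that case. (The paper states this explicitly: $\tilde Z$ is analytically conjugate to $Z_{0}$ if and only if $P=0$.)

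What the paper actually does in Section~\ref{sub:res-sed-trans-rigid} is split on whether $\hat Y\trs Y_{0}$. If $c\neq 0$ one checks directly from the explicit flow formula~\eqref{eq:res-sad-flot-Y} that $L\circ\mathcal N$ is meromorphic, forcing $N$ to converge. If $c=0$, however, $\hat Y$ is tangent to $Y_{0}$ and Lemma~\ref{lem:tr-homog-rigid} gives nothing: knowing $X_{0}\cdot N$ and $\hat Y\cdot N$ meromorphic is useless when $\hat Y$ and $Y_{0}$ are collinear. Instead the paper derives the scalar equation~\eqref{eq:res-sad-trans-rigid-eq-h} for $N$, and proves by an explicit primitive construction that it admits \emph{some} convergent solution $\tilde N$ (not the formal $N$). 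The map $\tilde{\mathcal N}=\flw{Y_{0}}{\tilde N}{}$ then sends $Y$ to $\hat Y$ but only sends $Z$ to $Z_{0}+K\hat Y$ with $\hat Y\cdot K=0$; the final reduction (Section~4.5) then normalizes $K$ to $P\circ u$ via flows of $\hat Y$. Two smaller points: in the resonant preparation $Y_{0}=y^{\varepsilon}W_{0}$ with $W_{0}=\lambda_{1}x\pp x+\lambda_{2}y\pp y$, not $y\pp y$; and tangential rigidity is obtained via the closed time\textendash{}form argument of Proposition~\ref{pro:res-sad-tang-rigid}, not via Lemma~\ref{lem:tr-homog-rigid}.
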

Note that $\tilde{Z}$ is polynomial and $\tilde{Y}$ is a global
object with either $0$ or $k$ points of ramification. One could
adapt the upcoming proof to show that the result remains true with
$\tilde{Z}=\left(Q\circ u\right)\left(X_{0}+\left(P\circ u\right)u^{a}x^{n}y^{m}W_{0}\right)$
(for a different $P$, of course); the later expression might be more
useful for practical purposes.

\subsection{Preparation and general facts}

~

We need the following well known ingredients:
\begin{itemize}
\item According to a result by \noun{H.~Dulac}~\cite{Dulac} there exists
local analytic coordinates in which $Z$ writes
\begin{eqnarray*}
Z & = & UX\\
X & = & X_{0}+RY_{0}\\
Y_{0} & := & y^{\varepsilon}W_{0}
\end{eqnarray*}
where $U\left(0,0\right)=1$, $R$ is divisible by $u^{k+1}$ and
$\varepsilon\in\left\{ -1,1\right\} $. We define $\varepsilon:=1$
if $Z$ admits two transverse separatrices while if $Z$ has only
one separatrix then necessarily $\lambda=0$ and we set $\varepsilon:=-1$.
Notice that $\left[X_{0},Y_{0}\right]=\delta_{0}Y_{0}$ with 
\begin{eqnarray*}
\delta_{0} & := & \varepsilon\lambda_{2}\left(1+\mu u^{k}\right)\in\ker Y_{0}\cdot\,.
\end{eqnarray*}

\item Any meromorphic function $h$ such that $Z\ddt h=0$ is constant.
Any meromorphic function $b\neq0$ such that $Z\ddt b=Kb$ for some
holomorphic $K$ is of the form 
\begin{eqnarray*}
b & = & x^{N}y^{M}\beta\left(x,y\right)\,\,\,,\,\left(N,M\right)\in\ww Z^{2}\,\textrm{and }\beta\left(0,0\right)\neq0
\end{eqnarray*}
because $\left\{ x=0\right\} $ and $\left\{ y=0\right\} $ are the
only candidate separatrices of $Z$.
\item ~

\begin{lem}
\label{lem:res-sad-eq-h-form}Let $G\in\frml{x,y}$ and $\varepsilon\in\left\{ -1,0,1\right\} $
be given. There exists $F\in\frml{x,y}$ such that 
\begin{eqnarray*}
X\ddt F+\varepsilon\lambda_{2}\left(1+\mu u^{k}\right)F & = & G
\end{eqnarray*}
 if, and only if, $G$ does not contains terms $y^{-\varepsilon}u^{n}$
for $0\leq n\leq k$. If $\varepsilon\leq0$ the term $c:=\frac{\partial^{-\varepsilon}F}{\partial y^{-\varepsilon}}\left(0,0\right)$
is free and $F-c$ is unique. \end{lem}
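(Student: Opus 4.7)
The plan is to solve $LF = G$ coefficient-by-coefficient, where $L := X\ddt + \varepsilon\lambda_{2}(1+\mu u^{k})$, by induction on the total degree $a+b$ of monomials $x^{a}y^{b}$. First I decompose $L = \Lambda + P$ where $\Lambda$ multiplies $x^{a}y^{b}$ by $\Lambda_{a,b} := a\lambda_{1} + (b+\varepsilon)\lambda_{2}$, and $P$ is strictly degree-increasing: the three pieces $u^{k}x\pp x$, $\mu u^{k}W_{0}$ and $\mu\varepsilon\lambda_{2}u^{k}$ each shift $(a,b)$ by $(qk,pk)$, raising $a+b$ by $k(p+q)$, while $RY_{0}$, with $R = u^{k+1}\tilde R$ and $Y_{0} = y^{\varepsilon}W_{0}$, shifts by $(q(k+1)+i,\,p(k+1)+\varepsilon+j)$ for $i,j\geq 0$, raising degree by at least $(k+1)(p+q)+\varepsilon \geq 1$. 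Reading $LF=G$ at the coefficient of $x^{A}y^{B}$ then yields a triangular recursion
\[
\Lambda_{A,B}\,f_{A,B} + E_{A,B}\bigl((f_{a,b})_{a+b<A+B}\bigr) = g_{A,B}\,.
\]
Since $\lambda=-p/q$, the diagonal vanishes exactly on the resonance lattice $(A,B)=(qt,\,pt-\varepsilon)$ for $t\in\ww Z_{\geq 0}$ with $pt\geq\varepsilon$; off this lattice, $f_{A,B}$ is uniquely determined.

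The content of the lemma therefore lies on the lattice, and the key step is to compute $E_{qt,\,pt-\varepsilon}$ explicitly. The $u^{k}$-shift maps the lattice to itself: its source is the resonant monomial at level $t-k$, on which $W_{0}$ acts as $-\varepsilon\lambda_{2}$ thanks to $q\lambda_{1}+p\lambda_{2}=0$. The three $u^{k}$-contributions $q(t-k)$, $-\mu\varepsilon\lambda_{2}$ and $\mu\varepsilon\lambda_{2}$ then sum exactly to $q(t-k)$, the $\mu$-terms cancelling on the resonance. The $RY_{0}$-shift's sources $(q(t-k-1)-i,\,p(t-k-1)-2\varepsilon-j)$ have a negative coordinate whenever $t\leq k$ and hence contribute nothing in that range. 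Thus for $0\leq t\leq k$ the recursion reduces to the pure obstruction $g_{qt,\,pt-\varepsilon}=0$ with $f_{qt,\,pt-\varepsilon}$ left free, whereas for $t\geq k+1$ it becomes
\[
q(t-k)\,f_{q(t-k),\,p(t-k)-\varepsilon} + (\text{terms known by induction}) = g_{qt,\,pt-\varepsilon}\,,
\]
with $q(t-k)\neq 0$, uniquely determining the resonant coefficient at level $t-k$ from previously computed data.

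Iterating $t=k+1,k+2,\ldots$ pins down every resonant coefficient at level $s\geq 1$ one after the other; the level-$0$ coefficient $f_{0,-\varepsilon}$ remains genuinely free, since its only prospective determination occurs at $t=k$ with vanishing coefficient $q\cdot 0$, and it belongs to $\frml{x,y}$ precisely when $\varepsilon\leq 0$, where it coincides with $c = \frac{\partial^{-\varepsilon}F}{\partial y^{-\varepsilon}}(0,0)$. The resulting $k+1$ obstructions $g_{qt,\,pt-\varepsilon}=0$ for $0\leq t\leq k$, with the $t=0$ case vacuous when $\varepsilon=1$ because $y^{-1}$ is not a formal monomial, recover exactly the list of forbidden terms $y^{-\varepsilon}u^{n}$ for $0\leq n\leq k$. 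The main obstacle is the combinatorial bookkeeping in the second paragraph: checking both the exact cancellation of the $\mu$-terms on resonance, which makes the leading coefficient come out to $q(t-k)$, and the invalidity of $RY_{0}$-sources below $t\leq k$; both rest on $q\lambda_{1}+p\lambda_{2}=0$ and on the factorization $R=u^{k+1}\tilde R$, and once they are in hand the triangular structure delivers the conclusion automatically.
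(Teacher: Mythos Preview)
Your proof is correct and follows essentially the same approach as the paper's: induction on the total degree $a+b$, identifying the resonance lattice $(qt,pt-\varepsilon)$ where the diagonal coefficient $\Lambda_{a,b}$ vanishes, and observing that on this lattice the recursion determines $f_{q(t-k),p(t-k)-\varepsilon}$ with leading coefficient $q(t-k)$ (which vanishes for $t\le k$, yielding the obstructions, and is nonzero for $t>k$). Your write-up is in fact more explicit than the paper's terse version---you spell out the decomposition $L=\Lambda+P$, verify the $\mu$-cancellation on resonance, and check that the $RY_0$-sources are absent for $t\le k$---but the underlying mechanism is identical.
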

\begin{proof}
Write $F\left(x,y\right):=\sum f_{a,b}x^{a}y^{b}$ and $G\left(x,y\right):=\sum g_{a,b}x^{a}y^{b}$.
The equation writes, for the monomial $x^{a}y^{b}$,
\begin{eqnarray*}
\left(\lambda_{1}a+\lambda_{2}\left(b+\varepsilon\right)\right)\left(f_{a,b}+\mu f_{a-kq,b-kp}\right)+\left(a-kq\right)f_{a-kq,b-kp}+o\left(a,b\right) & = & g_{a,b}
\end{eqnarray*}
where $o\left(a,b\right)$ stands for terms involving $f_{c,d}$ only
for $c+d<a+b-kq-kp$. We proceed by induction on the homogeneous degree
$a+b$  : while $\lambda_{1}a+\lambda_{2}\left(b+\varepsilon\right)\neq0$
the term $f_{a,b}$ is defined uniquely. On the contrary if $\left(a,b+\varepsilon\right)=l\left(q,p\right)$
with $l\in\ww Z$  :
\begin{enumerate}
\item if $0\leq l<k$ then $f_{a-kq,b-kp}=0$ so necessarily $g_{a,b}=0$. 
\item if $l=k$ then $g_{kq,kp}=0$ and $f_{0,-\varepsilon}$ is free if
$\varepsilon\leq0$ or is zero if $\varepsilon=1$.
\item if $l>k$ the term $f_{\left(l-k\right)p,\left(l-k\right)q}$ is well
determined.
\end{enumerate}
\end{proof}
\item Hence there exists a polynomial $Q\in\pol ku$ (of degree at most
$k$) satisfying $Q\left(0\right)=1$ such that $Z$ is formally tangentially
conjugate to $\left(Q\circ u\right)X$ by $\mathcal{T}:=\flw{QX}T{}$,
according to Proposition~\ref{pro:chg_coord}. Namely $Q$ is the
natural projection of $U$ on $\pol ku$ and
\begin{eqnarray*}
X\ddt T & = & \frac{1}{Q\circ u}-\frac{1}{U}\,.
\end{eqnarray*}

\item If $Z$ admits two separatrices, because $\left[X_{0},Y_{0}\right]=\delta_{0}Y_{0}$
and $Y_{0}\ddt u=0$ we deduce that $\left(Q\circ u\right)X_{0}$
is formally transversely conjugate to $\left(Q\circ u\right)X$ by
\begin{eqnarray}
\mathcal{N}\left(x,y\right): & = & \flw{Y_{0}}{N\left(x,y\right)}{\left(x,y\right)}\nonumber \\
 & = & \left(x\left(1-\varepsilon\lambda_{2}y^{\varepsilon}N\left(x,y\right)\right)^{-\varepsilon\lambda},y\left(1-\varepsilon\lambda_{2}y^{\varepsilon}N\left(x,y\right)\right)^{-\varepsilon}\right)\label{eq:res-sad-flot-Y}
\end{eqnarray}
where
\begin{eqnarray}
X\ddt N+\delta_{0}N & = & -R\,.\label{eq:res-sad-trans-eq-h}
\end{eqnarray}
 
\item The triple $\left(Q,k,\mu\right)$ is the formal invariant of conjugacy~\cite{Bruno}
for $Z$, up to the equivalence
\begin{eqnarray*}
\left(Q,k,\mu\right)\sim\left(\tilde{Q},\tilde{k},\tilde{\mu}\right) & \Leftrightarrow & \left(k,\mu\right)=\left(\tilde{k},\tilde{\mu}\right)\textrm{ and }Q\left(u\right)=\tilde{Q}\left(\alpha u\right)\textrm{ with }\alpha^{k}=1\,.
\end{eqnarray*}
In the following we fix $\left(k,\mu\right)$, which is the formal
invariant for the foliation underlying $Z$. We define
\begin{eqnarray*}
Z_{0} & := & \left(Q\circ u\right)X_{0}\,.
\end{eqnarray*}

\end{itemize}

\subsection{\label{sub:res-sad-form-aff-lie}Formal normal forms}

~

Here we study the Lie algebras $\mathcal{L}\left(Z_{0},\hat{Y}\right)$. 
\begin{prop}
\label{pro:res-sad-form-aff-lie}Assume that $\hat{Y}\neq0$ is a
formal meromorphic vector field such that $\left[Z_{0},\hat{Y}\right]=\delta\hat{Y}$
and $\hat{Y}\trs Z_{0}$. 
\begin{enumerate}
\item If $\delta\neq0$ there exists a unique $\left(n,m\right)\in\ww Z^{2}\backslash\left(q,p\right)\ww Z$
such that
\begin{eqnarray*}
\delta & = & n\lambda_{1}+m\lambda_{2}\\
Q\left(u\right) & = & 1-\left(\frac{n}{\delta}+\mu\right)u^{k}\\
\hat{Y} & = & x^{n}y^{m}\left(Q\circ u\right)^{\gamma}\left(cZ_{0}+dW_{0}\right)
\end{eqnarray*}
with $\left(c,d\right)\in\ww C\times\ww C^{*}$ and $\gamma=-\left(\delta\mu+n\right)/kq$.
\item If $\delta=0$ there exists $\left(c,d\right)\in\ww C\times\ww C^{*}$
such that
\begin{eqnarray*}
\hat{Y} & = & cZ_{0}+dW_{0}\,.
\end{eqnarray*}

\end{enumerate}
\end{prop}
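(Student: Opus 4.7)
The strategy is to exploit the commuting basis $(Z_0, W_0)$: since $u = x^q y^p$ satisfies $W_0 \cdot u = (q\lambda_1 + p\lambda_2)u = 0$, a short computation using $[fA, V] = f[A, V] - (V \cdot f)A$ gives $[X_0, W_0] = 0$, hence $[Z_0, W_0] = (Q\circ u)[X_0, W_0] - (W_0 \cdot (Q\circ u))X_0 = 0$. Moreover $Z_0\wedge W_0 = \lambda_2(Q\circ u)u^k xy\,\pp x\wedge\pp y$ is a nonzero formal expression, so every formal meromorphic $\hat Y$ decomposes uniquely as $\hat Y = aZ_0 + bW_0$ with $a, b$ formal meromorphic, and the transversality assumption $\hat Y \trs Z_0$ forces $b \neq 0$. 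The Lie bracket relation then reduces to the scalar equations $Z_0\cdot a = \delta a$ and $Z_0 \cdot b = \delta b$.

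Any nonzero formal meromorphic $f$ satisfying $Z_0 \cdot f = \delta f$ admits (by the formal-series version of the argument of Lemma~\ref{lem:d-basic}, the candidate separatrices of $Z_0$ being only $\{x=0\}$ and $\{y=0\}$) a factorization $f = x^N y^M \phi$ with $\phi \in \frml{x, y}$ a unit. Setting $\delta' := N\lambda_1 + M\lambda_2$, the computation $X_0 \cdot \log(x^N y^M) = \delta' + (N + \mu\delta')u^k$ and division by the unit $Q\circ u$ lead to
\[ X_0 \cdot \log \phi = \frac{\delta}{Q\circ u} - \delta' - (N + \mu\delta')u^k. \]
Applying Lemma~\ref{lem:res-sad-eq-h-form} with $\varepsilon = 0$, solvability in $\frml{x,y}$ requires the right-hand side to have no $u^n$ coefficient for $0 \leq n \leq k$. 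Expanding $1/(Q\circ u)$ as a formal series in $u$ (legal since $Q(0) = 1$), the vanishing at $u^0$ yields $\delta = \delta' = N\lambda_1 + M\lambda_2$; the vanishing at $u, u^2, \ldots, u^{k-1}$ (when $\delta \neq 0$) forces the polynomial $Q$ to collapse to the two-term form $Q(u) = 1 + q_k u^k$; and the vanishing at $u^k$ gives $q_k = -(N/\delta + \mu)$, matching the formula in the statement. Since $Q$ is the formal invariant of $Z_0$, this equation uniquely pins down $N$, and then $M$ by $\delta = N\lambda_1 + M\lambda_2$. With $(N, M)$ fixed, the ansatz $\log\phi = \gamma\log(Q\circ u)$ solves the equation for $\gamma = -(N + \mu\delta)/(qk)$ (the nontrivial match being at $u^{2k}$, which gives $\gamma qk q_k = \delta q_k^2$), and by the uniqueness clause of Lemma~\ref{lem:res-sad-eq-h-form} this solution is determined modulo an additive constant, yielding $\phi = e^c(Q\circ u)^\gamma$.

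Applied to both $a$ and $b$, which must share the same $(N,M) = (n,m)$ and $\gamma$ because they produce the same invariant $Q$, this gives $\hat Y = x^n y^m (Q\circ u)^\gamma(cZ_0 + dW_0)$ with $d \neq 0$ enforced by $\hat Y \trs Z_0$. The case $\delta = 0$ reduces to showing the formal meromorphic first integrals of $Z_0$ are constants: the factorization $f = x^N y^M \phi$ now gives $\delta' = 0$ and $X_0 \cdot \log \phi = -Nu^k$, and the $u^k$ obstruction in Lemma~\ref{lem:res-sad-eq-h-form} forces $N = 0$, hence $M = 0$ and $\log\phi$ is constant. The main technical hurdle lies in the middle paragraph: threading Lemma~\ref{lem:res-sad-eq-h-form} through the expansion of $1/(Q\circ u)$ to simultaneously collapse $Q$ to its two-term form and pin down $(n, m)$ is where the resonance structure of the problem gets encoded.
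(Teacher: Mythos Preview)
Your proof is correct and follows essentially the same approach as the paper's: decompose $\hat Y = aZ_0 + bW_0$ using the commuting frame, reduce to the scalar equation $Z_0\cdot b = \delta b$, factor $b = x^n y^m \beta$ with $\beta$ a unit, and apply Lemma~\ref{lem:res-sad-eq-h-form} to the equation for $\log\beta$ to read off the obstructions at $u^0,\dots,u^k$ and then identify the solution as $(Q\circ u)^{\gamma}$ via uniqueness. The only cosmetic difference is that you divide through by $Q\circ u$ at the outset and work with $X_0$ directly (expanding $1/(Q\circ u)$), whereas the paper keeps $Z_0$ on the left and reads the obstructions off the polynomial $Q\circ u$ on the right; since $Q(0)=1$, the two formulations are equivalent.
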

\begin{proof}
Observe that $W_{0}$ is holomorphic and commutes with $Z_{0}$. Write
$\hat{Y}=aZ_{0}+bW_{0}$ with $a$ and $b$ in $\fmero{x,y}$. Then
\begin{eqnarray*}
Z_{0}\ddt a & = & \delta a\\
Z_{0}\ddt b & = & \delta b\,.
\end{eqnarray*}
Since $b\neq0$ we can write $b\left(x,y\right)=x^{n}y^{m}\beta\left(x,y\right)$
for $\left(n,m\right)\in\ww Z^{2}$ and $\beta\left(0,0\right)\neq0$.
We find  :
\begin{eqnarray*}
\left(Q\circ u\right)\left(\left(n\lambda_{1}+m\lambda_{2}\right)\left(1+\mu u^{k}\right)+nu^{k}\right)\beta+Z_{0}\ddt\beta & = & \delta\beta\,.
\end{eqnarray*}
Define $B:=\log\beta\in\frml{x,y}$. Lemma~\ref{lem:res-sad-eq-h-form}
implies that
\begin{eqnarray}
Z_{0}\ddt B & = & \delta-(Q\circ u)\left(\left(n\lambda_{1}+m\lambda_{2}\right)\left(1+\mu u^{k}\right)+nu^{k}\right)\label{eq:res-sad-eq-h-1}
\end{eqnarray}
cannot contain terms of the form $u^{l}$ for $0\leq l\leq k$. We
immediately get that $\delta=n\lambda_{1}+m\lambda_{2}$. 

Write $Q\left(u\right)=1+\sum_{j=1}^{k}v_{j}u^{k}$ and assume first
that $\delta\neq0$. We immediately derive that $v_{j}=0$ for $0<j<k$,
while
\begin{eqnarray*}
n & = & -\delta\left(v_{k}+\mu\right)\,.
\end{eqnarray*}
Thus 
\begin{eqnarray*}
Q\circ u & = & 1-\left(\frac{n}{\delta}+\mu\right)u^{k}\,,
\end{eqnarray*}
which determines $n$ completely (and thus $m$ since $\delta=n\lambda_{1}+m\lambda_{2}$).
On the one hand for all $\gamma\in\ww C$ 
\begin{eqnarray*}
Z_{0}\ddt\gamma\log Q\circ u & = & \gamma qu^{k+1}Q'\circ u\\
 & = & \gamma qk\lambda_{2}v_{k}u^{2k}
\end{eqnarray*}
whereas on the other hand
\begin{eqnarray*}
Z_{0}\ddt B & = & -\delta v_{k}^{2}u^{2k}\,.
\end{eqnarray*}
The uniqueness condition in Lemma \ref{lem:res-sad-eq-h-form} allows
us to find $C\in\ww C$ such that
\begin{eqnarray*}
B & = & \gamma\log Q\circ u+C\,
\end{eqnarray*}
with 
\begin{eqnarray*}
\gamma & := & -\frac{\delta\mu+n}{qk}\,.
\end{eqnarray*}
To conclude
\begin{eqnarray*}
b & = & dx^{n}y^{m}\left(Q\circ u\right)^{\gamma}\\
a & = & cx^{n}y^{m}\left(Q\circ u\right)^{\gamma}\,.
\end{eqnarray*}

Finally when $\delta:=0$ the above computations show that $n=0$,
then $m=0$ and $\beta$ must be constant.
\end{proof}

\subsection{\label{sub:res-sed-tang-rigid}Tangential rigidity}

~

Here we do not assume anything on the number of separatrices of $Z$
or the existence of a non\textendash{}degenerate Lie algebra $\mathcal{L}\left(Z,Y\right)$.
We end the proof of Theorem~\ref{thm:rigid-tang}:
\begin{prop}
\label{pro:res-sad-tang-rigid}There exists a closed time\textendash{}form
$\tau$ for $Z$ if, and only if, $Z$ is analytically conjugate to
$Q\left(u\right)X$ by a tangential change of coordinates.
\end{prop}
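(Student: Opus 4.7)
The plan is to mirror the quasi\textendash{}resonant argument of Corollary \ref{cor:irrat-tang-rigid}. The new ingredient is a canonical closed meromorphic time\textendash{}form for the reduced model $\left(Q\circ u\right)X$: since $W_{0}\cdot u=\left(\lambda_{1}q+\lambda_{2}p\right)u=0$ and $Y_{0}$ is a multiple of $W_{0}$, we have $Y_{0}\cdot u=0$, while a direct computation gives $X_{0}\cdot u=qu^{k+1}$; hence $X\cdot u=qu^{k+1}$ and the $1$\textendash{}form
\begin{eqnarray*}
\tau_{X}^{\ast} & := & \frac{\dd u}{q\left(Q\circ u\right)u^{k+1}}
\end{eqnarray*}
satisfies $\tau_{X}^{\ast}\left(\left(Q\circ u\right)X\right)=1$. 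Being the pullback of a $1$\textendash{}form in the single variable $u$ it is closed, and since $Q(0)=1$ its polar locus near $\left(0,0\right)$ is exactly $\{xy=0\}$, the union of separatrices of $Z$. The implication $(\Leftarrow)$ is then immediate: any analytic tangential conjugacy between $Z$ and $\left(Q\circ u\right)X$ transports $\tau_{X}^{\ast}$ into a closed meromorphic time\textendash{}form for $Z$.

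For $(\Rightarrow)$, let $\tau$ be a closed meromorphic time\textendash{}form for $Z=UX=A\pp x+B\pp y$. Writing
\begin{eqnarray*}
\tau & = & \frac{Q\circ u}{U}\tau_{X}^{\ast}+\frac{f}{\frak{p}}\left(A\dd y-B\dd x\right)
\end{eqnarray*}
with $f,\frak{p}$ coprime holomorphic, the condition $\tau(Z)=1$ is satisfied automatically. Expanding $\dd\tau=0$ exactly as in the proof of Corollary \ref{cor:irrat-tang-rigid}, with $\tau_{X}^{\ast}$ taking the role of $\tau_{X}$, one obtains a relation of the form
\begin{eqnarray*}
\frak{p}\,Z\cdot f-f\,Z\cdot\frak{p}+\frak{p}f\,\tx{div}\,Z & = & \frak{p}^{2}\,g
\end{eqnarray*}
with $g$ holomorphic, which forces $\frak{p}$ to divide $Z\cdot\frak{p}$. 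Thus $\{\frak{p}=0\}$ is a separatrix of $Z$ and the polar locus of $\tau$ is contained in $\{xy=0\}$. Consequently $\theta:=\tau_{X}^{\ast}-\tau$ is closed meromorphic with poles confined to the normal\textendash{}crossing divisor $\{xy=0\}$; the classical residue decomposition produces $F$ in the space $\mathcal{A}$ of Lemma \ref{lem:irrat-tang-rigid} with $\theta=\dd F$, so that $F$ is a convergent (multivalued meromorphic) solution of
\begin{eqnarray*}
X\cdot F & = & \theta(X)\;=\;\frac{1}{Q\circ u}-\frac{1}{U}\,.
\end{eqnarray*}

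It remains to extract from $F$ the tangential time $T$ and conclude that it converges, i.e.\ to establish a resonant counterpart of Lemma \ref{lem:irrat-tang-rigid}: every $F\in\mathcal{A}$ with $X\cdot F\in\germ{x,y}$ splits as $F=\hat F+\alpha\log x+\beta\log y$ with $\hat F\in\frml{x,y}$ and with the logarithmic coefficients $\left(\alpha,\beta\right)$ prescribed by the right\textendash{}hand side. The argument is the same coefficient\textendash{}by\textendash{}coefficient elimination of the poles of the meromorphic part of $F$ along $\{xy=0\}$ as in the quasi\textendash{}resonant case. The one genuinely new feature, and the main technical point, is that the diagonal factors $a\lambda_{1}+b\lambda_{2}$ now vanish along the resonant lattice $\left(a,b\right)\in\left(q,p\right)\ww Z_{\geq0}$; both the resonant obstructions at depths $0\leq l\leq k$ and the higher resonances $l>k$ that back\textendash{}propagate to pin down the formerly free coefficients must be tracked by the inductive scheme of Lemma \ref{lem:res-sad-eq-h-form}. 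Once this lemma is available, its uniqueness statement identifies the formal solution $T$ (normalized so that $T(0,0)=0$) with $\hat F-\hat F(0,0)$, from which $T$ converges and $\mathcal{T}:=\flw{QX}T{}$ provides the sought analytic tangential conjugacy between $Z$ and $\left(Q\circ u\right)X$.
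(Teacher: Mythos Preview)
Your strategy is essentially the paper's own: subtract from $\tau$ a canonical closed time\textendash{}form for the model (the paper uses $\frac{1}{Q\circ u}\tau_{X}$ with $\tau_{X}=\frac{\dd u}{u^{k+1}}$, which up to the constant $q$ is your $\tau_{X}^{\ast}$), show the polar locus is a separatrix so the difference integrates to some $F\in\mathcal{A}$, and then invoke a resonant analogue of Lemma~\ref{lem:irrat-tang-rigid} together with the uniqueness in Lemma~\ref{lem:res-sad-eq-h-form} to identify $T$ with a convergent power series.

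One point deserves correction. Your proposed form of the resonant lemma, namely $F=\hat F+\alpha\log x+\beta\log y$ with $\hat F\in\frml{x,y}$, is not quite right: in the resonant setting the kernel of $X\cdot$ on $\mathcal{A}$ modulo constants is spanned not by pure logarithms but by
\[
(1-\mu p)\log y-\mu q\log x+\frac{1}{ku^{k}}\,,
\]
so the correct splitting (the paper's Lemma~\ref{lem:res-sad-tang-rigid}) carries an unavoidable pole term $\frac{1}{u^{k}}$ in addition to the logs. This does not harm your conclusion, since that extra term is still a convergent object and subtracting it from the convergent $F$ leaves $\hat F$ convergent; but the shape of the lemma you sketch (and your claim that the log coefficients are ``prescribed by the right\textendash{}hand side'') should be amended accordingly. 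With that fix your outline coincides with the paper's proof.
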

We proceed along the same steps as taken in Section~\ref{sub:irrat-tang-rigid}.
\begin{enumerate}
\item If the polar locus of $\tau_{0}:=\tau-\frac{1}{Q\circ u}\tau_{X}$
is a separatrix of $Z$, where $\tau_{X}:=\frac{du}{u^{k+1}}$, we
have 
\begin{eqnarray*}
\tau_{0} & = & \dd{\left(\frac{A\left(x,y\right)}{x^{N}y^{M}}+\alpha\log x+\beta\log y\right)}\\
 & = & \dd F
\end{eqnarray*}
with $A$ holomorphic and $\left(N,M,\alpha,\beta\right)\in\ww Z_{\geq0}^{2}\times\ww C^{2}$.
Besides
\begin{eqnarray*}
X\ddt F & = & \frac{1}{Q\circ u}-\frac{1}{U}\\
 & = & X\ddt T\,.
\end{eqnarray*}

\item We need to show that $A=x^{N}y^{M}\tilde{A}$ and $\alpha=\beta=0$.
Once this is done the uniqueness condition in Lemma~\ref{lem:res-sad-eq-h-form}
implies that $T=\tilde{A}-\tilde{A}\left(0,0\right)$ is a convergent
power series.
\item Any closed time\textendash{}form $\tau$ of $Z$ is of the form 
\begin{eqnarray*}
\tau & = & \frac{1}{U}\tau_{X}+\frac{f}{\frak{p}}\omega
\end{eqnarray*}
for some holomorphic $1$\textendash{}form $\omega$ with $\omega\left(Z\right)=0$.
The polar locus $\left\{ \frak{p}=0\right\} $ is again a separatrix
of $Z$.
\end{enumerate}
We state now in detail the second step but we skip the proof since
it is very similar to the one of Lemma~\ref{lem:irrat-tang-rigid}.
\begin{lem}
\label{lem:res-sad-tang-rigid}Let $\mathcal{A}$ be the linear space
over $\ww C$ of all formal expressions 
\[
\frac{A\left(x,y\right)}{x^{N}y^{M}}+\alpha\log x+\beta\log y
\]
with $A\in\frml{x,y}$ and $\left(\alpha,\beta,N,M\right)\in\ww C^{2}\times\ww Z_{\geq0}^{2}$.
Assume that $F\in\mathcal{A}$ is such that $X\ddt F=G\in\frml{x,y}$.
Then there exists $\hat{F}\in\frml{x,y}$ and a unique $\alpha\in\ww C$
such that
\begin{eqnarray*}
F\left(x,y\right) & = & \hat{F}\left(x,y\right)-G\left(0,0\right)\frac{1}{qku^{k}}+\alpha\left(\left(1-\mu p\right)\log y-\mu q\log x+\frac{1}{ku^{k}}\right)\,.
\end{eqnarray*}
The power series $\hat{F}-\hat{F}\left(0,0\right)$ is unique. On
the other hand every $G\in\frml{x,y}$ writes $X\ddt F$ for some
$F\in\mathcal{A}$ with $\alpha=0$.
\end{lem}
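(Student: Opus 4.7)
The plan is to mirror the proof of Lemma~\ref{lem:irrat-tang-rigid}, with the combinatorics adjusted for the resonant spectrum. Writing
\[ F(x,y) = \frac{A(x,y)}{x^N y^M} + \alpha \log x + \beta \log y \]
with $A \in \frml{x,y}$ and setting $\phi_x := X \cdot \log x$, $\phi_y := X \cdot \log y$ (both in $\frml{x,y}$ since $R$ is divisible by $u^{k+1}$), the equation $X \cdot F = G$ becomes, after multiplication by $x^N y^M$,
\begin{equation*}
X \cdot A - (N\phi_x + M\phi_y)\, A \;=\; x^N y^M \bigl(G - \alpha \phi_x - \beta \phi_y\bigr).
\end{equation*}
The explicit expressions $\phi_x = \lambda_1 + (1+\mu\lambda_1)u^k + \lambda_1 R y^\varepsilon$ and $\phi_y = \lambda_2(1+\mu u^k) + \lambda_2 R y^\varepsilon$ will be used throughout.

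First I would establish divisibility $A \in x^N y^M \frml{x,y}$. Expanding $A = \sum f_{a,b} x^a y^b$, the leading coefficient of $x^a y^b$ on the left is $[(a-N)\lambda_1 + (b-M)\lambda_2]\, f_{a,b}$ plus corrections involving strictly smaller multi-indices (those introduced by the $u^k$-perturbations of $\phi_x,\phi_y$), while the right-hand side vanishes whenever $a < N$ or $b < M$. Inducting on $a+b$, as in the argument following equation~\eqref{eq:irrat-eq-h-coef}, and handling the resonant degrees $(a-N, b-M) \in (q,p)\ww Z_{\geq 0}$ by invoking Lemma~\ref{lem:res-sad-eq-h-form}, one forces $f_{a,b} = 0$ throughout this range. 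Thus $A = x^N y^M \tilde A$ with $\tilde A \in \frml{x,y}$.

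Next I would extract the one-dimensional free parameter. The relation $X \cdot u = q u^{k+1}$ (a consequence of $q\lambda_1 + p\lambda_2 = 0$) yields $X \cdot \bigl(1/(k u^k)\bigr) = -q$, so the pole term $-G(0,0)/(qku^k)$ precisely absorbs the constant $G(0,0)$. The combination $L = (1-\mu p)\log y - \mu q \log x + \frac{1}{ku^k}$ is characterized as the unique (up to scale) non-holomorphic element of $\mathcal{A}$ whose image $X\cdot L$ lies in $\frml{x,y}$ and is not already in the range of $X\cdot$ restricted to $\frml{x,y}$; its coefficients $(1-\mu p, -\mu q, 1/k)$ are forced by matching the constant and $u^k$-contributions of $\phi_x$, $\phi_y$ against $X\cdot(1/u^k) = -kq$, using the resonance identity to enable the key cancellation. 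Rewriting $F$ in the advertised form and comparing logarithmic parts pins down $\alpha$ uniquely, while the uniqueness of $\hat F - \hat F(0,0)$ is inherited from the uniqueness statement of Lemma~\ref{lem:res-sad-eq-h-form}.

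The converse assertion --- that every $G \in \frml{x,y}$ writes as $X\cdot F$ for some $F \in \mathcal{A}$ with $\alpha = 0$ --- is constructive: peel off $G(0,0)$ via $-G(0,0)/(qku^k)$, peel off the remaining resonant obstructions $G|_{u^n}$ for $1 \leq n \leq k-1$ using auxiliary pole terms $1/u^j \in \mathcal{A}$ (which satisfy $X\cdot(1/u^j) = -jq u^{k-j}$), and apply Lemma~\ref{lem:res-sad-eq-h-form} to the purely holomorphic residue. The main technical obstacle is the identification of the exact coefficient vector defining $L$, which requires carefully tracking the $u^0$ and $u^k$ components of $\phi_x$ and $\phi_y$ and exploiting the resonance $q\lambda_1 + p\lambda_2 = 0$; once this is in hand, the remainder of the argument is a direct transposition of the irrational case with the extra bookkeeping forced by the resonant monomials.
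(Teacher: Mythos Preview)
Your proposal is correct and follows exactly the approach the paper itself indicates: the paper states the lemma but explicitly \emph{skips} the proof, writing only that ``it is very similar to the one of Lemma~\ref{lem:irrat-tang-rigid}.'' Your outline---multiply through by $x^{N}y^{M}$, run the induction of~\eqref{eq:irrat-eq-h-coef} with the resonant coefficients $(a-N)\lambda_{1}+(b-M)\lambda_{2}$ handled via the shifted recursion of Lemma~\ref{lem:res-sad-eq-h-form}, then use $X\cdot u=qu^{k+1}$ to identify the pole term $-G(0,0)/(qku^{k})$ and the free direction $L$---is precisely that transposition, and in fact supplies more detail than the paper does.
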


\subsection{\label{sub:res-sed-trans-rigid}Transversal rigidity}

~

Let us write
\begin{eqnarray*}
\hat{Y} & = & L\left(cZ_{0}+dy^{-\varepsilon}Y_{0}\right)\\
L & := & x^{n}y^{m}\left(Q\circ u\right)^{\gamma}
\end{eqnarray*}
with $c\in\ww C$, $d\in\ww C^{*}$ as in Proposition~\ref{pro:res-sad-form-aff-lie}.
Since $Y=\mathcal{N}^{*}\hat{Y}$ is meromorphic we deduce that
\begin{eqnarray*}
cL\circ\mathcal{N} & \textrm{and} & \left(y^{-\varepsilon}L\right)\circ\mathcal{N}\left(1+Y_{0}\ddt N\right)^{-1}
\end{eqnarray*}
 are meromorphic. Taking~(\ref{eq:res-sad-flot-Y}) into account
we derive that
\begin{eqnarray*}
L\circ\mathcal{N} & = & L\left(1-\varepsilon\lambda_{2}y^{\varepsilon}N\right)^{-\varepsilon\left(m+n\lambda\right)}\\
\left(y^{-\varepsilon}L\right)\circ\mathcal{N} & = & y^{-\varepsilon}L\left(1-\varepsilon\lambda_{2}y^{\varepsilon}N\right)^{1-\varepsilon\left(m+n\lambda\right)}
\end{eqnarray*}
which means, if $c\neq0$, that we are done: $N$ is convergent since
$\varepsilon\left(m+n\lambda\right)=\varepsilon\delta/\lambda_{2}\neq0$.
On the other hand, if $c=0$, then
\begin{eqnarray}
\left(1+Y_{0}\ddt N\right)\left(1-\varepsilon\lambda_{2}y^{\varepsilon}N\right)^{\varepsilon\delta/\lambda_{2}-1} & = & A\label{eq:res-sad-trans-rigid-eq-h}
\end{eqnarray}
with $A$ meromorphic. Indeed, either $\varepsilon=1$ and the claim
is clear, or $\varepsilon=-1$ and $\varepsilon\delta/\lambda_{2}=-m$
so that $\left(1+\lambda_{2}y^{-1}N\right)^{-\delta/\lambda_{2}-1}=y^{m+1}\left(y+\lambda_{2}N\right)^{-m-1}$. 
\begin{lem}
There exists a convergent power series $\tilde{N}$ satisfying equation~(\ref{eq:res-sad-trans-rigid-eq-h})
and such that $\tilde{N}\left(0,0\right)=Y_{0}\ddt\tilde{N}\left(0,0\right)=0$.\end{lem}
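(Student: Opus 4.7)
The plan is to linearize~(\ref{eq:res-sad-trans-rigid-eq-h}) via the substitution $M := 1 - \varepsilon\lambda_{2}y^{\varepsilon}\tilde{N}$ and $P := M^{\varepsilon\delta/\lambda_{2}}$. Using $W_{0}\cdot y^{\varepsilon} = \varepsilon\lambda_{2}y^{\varepsilon}$ and $Y_{0}=y^{\varepsilon}W_{0}$, a direct computation yields $1 + Y_{0}\cdot\tilde{N} = M - (W_{0}\cdot M)/(\varepsilon\lambda_{2})$, which once multiplied by $M^{\varepsilon\delta/\lambda_{2} - 1}$ transforms~(\ref{eq:res-sad-trans-rigid-eq-h}) into the \emph{linear} equation
\begin{eqnarray*}
W_{0}\cdot P & = & \delta(P - A)\,.
\end{eqnarray*}
When $\varepsilon = -1$, the relation $\lambda_{1} = 0$ makes the exponent $\varepsilon\delta/\lambda_{2} = -m$ an integer; when $\varepsilon = 1$, the fractional power $M^{\varepsilon\delta/\lambda_{2}}$ is a well-defined formal series around $M = 1$.

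I would then solve this linear equation Laurent-coefficient by Laurent-coefficient. The polar locus of $A$ lies in the separatrices $\{xy = 0\}$ of $Z$ (by the usual argument of Lemma~\ref{lem:pole-is-sep}), so $A = \sum A_{a,b} x^{a} y^{b}$ converges as a Laurent series. Writing $P = \sum P_{a,b} x^{a} y^{b}$, the equation decouples into
\begin{eqnarray*}
\left(a\lambda_{1} + b\lambda_{2} - \delta\right) P_{a,b} & = & -\delta A_{a,b}\,.
\end{eqnarray*}
The rationality $\lambda = -p/q \in \ww Q$ is decisive here: $a\lambda_{1} + b\lambda_{2} - \delta = (\lambda_{2}/q)\bigl(bq - ap - (mq - np)\bigr)$ either vanishes (at the resonant set $(a,b) \in (n,m) + (q,p)\ww Z$) or is bounded below in modulus by $|\lambda_{2}|/q$, so no small divisors appear. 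At resonances, the formal $N$ produced by Lemma~\ref{lem:res-sad-eq-h-form} yields through the same substitution a formal solution of the linear equation, forcing the solvability condition $A_{n+lq, m+lp} = 0$. I would set $P_{n+lq, m+lp} := 0$ at resonances and determine the remaining coefficients from the equation; the uniform lower bound on denominators guarantees that $P$ inherits the convergence of $A$ as a Laurent series, with constant term $P_{0,0} = A_{0,0} = 1$.

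Finally, I would invert the substitution. Evaluating the defining relation at the origin gives $A(0,0) = 1$, hence $P(0,0) = 1$, so $M := P^{\lambda_{2}/(\varepsilon\delta)}$ is a well-defined convergent germ with $M(0,0) = 1$. Evaluating at $y = 0$ gives $A(x, 0) = 1$, hence $P(x, 0) = 1$ and $M(x, 0) = 1$ (for $\varepsilon = 1$); for $\varepsilon = -1$ the integer exponent causes no branch issue and $\tilde{N} = y(M-1)/\lambda_{2}$ is holomorphic by construction. Thus $\tilde{N} := (1 - M)/(\varepsilon\lambda_{2}y^{\varepsilon})$ is a convergent power series satisfying~(\ref{eq:res-sad-trans-rigid-eq-h}), with $\tilde{N}(0,0) = 0$. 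The compatibility $Y_{0} \cdot \tilde{N}(0,0) = 0$ is automatic: for $\varepsilon = 1$ because $Y_{0}$ vanishes at the origin; for $\varepsilon = -1$ it reduces (via $Y_{0} = \lambda_{2}\partial_{y}$) to $\partial_{y}\tilde{N}(0,0) = 0$, itself a direct consequence of $M(0,0) = 1$.

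The main technical hurdle is establishing the vanishing of the resonant Laurent coefficients of $A$: this does not follow from meromorphy of $A$ alone, and must instead be extracted from the existence of the formal solution $N$. Once that obstruction vanishes, the remainder is routine bookkeeping because the rationality of $\lambda$ eliminates every small-divisor obstruction.
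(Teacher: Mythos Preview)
For $\varepsilon = 1$ your argument coincides with the paper's: the same substitution $P = (1-\lambda_2 yN)^{\delta/\lambda_2}$, the same linear equation $W_0\cdot P = \delta(P-A)$, the same use of the formal $N$ to kill the resonant coefficients of $A$, and the same inversion to recover $\tilde N$.

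The gap is in the case $\varepsilon = -1$. There $Y_0 = \lambda_2\partial_y$ and your formal $P$ equals $y^m(y+\lambda_2 N)^{-m}$. This is a formal Laurent series with poles only along $\{xy=0\}$ precisely when $N(x,0)=0$, i.e.\ when $\{y=0\}$ is a separatrix of $Z$. But $\varepsilon=-1$ is by definition the case where $Z$ has only \emph{one} analytic separatrix, namely $\{x=0\}$; the formal center manifold is $\{y=\hat s(x)\}$ for a (generically divergent) $\hat s\neq 0$, and $\mathcal N$ must send $\{y=0\}$ onto that curve. Hence $y+\lambda_2 N = (y-\hat s)\exp\hat N$ with $\hat s\neq 0$, and your formal $P$ has a pole along a non-coordinate formal curve rather than along $\{y=0\}$. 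You therefore cannot read off the vanishing of the resonant Laurent coefficients $A_{a,m}$ from it, and the decoupled equation $(b-m)P_{a,b} = -mA_{a,b}$ may simply be unsolvable at $b=m$. The paper bypasses this obstruction entirely: it observes that $y^{-m-1}A$ has (formal) poles of exact order $m+1\neq 1$ along $\{y=\hat s\}$, hence admits a meromorphic $\partial_y$-primitive $E$, and sets $\tilde N := (E^{-1/m}-y)/\lambda_2$. No resonant vanishing is needed for that argument.

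A smaller point in the same direction: your normalization ``$A(0,0)=1$'' is obtained by evaluating \eqref{eq:res-sad-trans-rigid-eq-h} at the origin, but for $\varepsilon=-1$ the factor $y^{\varepsilon}=y^{-1}$ makes that evaluation meaningless, so the constant-term control of $P$ in that case also requires a separate treatment.
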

\begin{proof}
Noticing that $Y_{0}\ddt y^{-\varepsilon}=-\varepsilon\lambda_{2}$
we deduce that 
\begin{eqnarray*}
Y_{0}\ddt\left(y^{-\varepsilon}-\varepsilon\lambda_{2}N\right)^{\varepsilon\delta/\lambda_{2}} & = & -\delta\left(y^{-\varepsilon}-\varepsilon\lambda_{2}\right)^{\varepsilon\delta/\lambda_{2}-1}\left(1+Y_{0}\ddt N\right)\\
 & = & -\delta y^{-\delta/\lambda_{2}+\varepsilon}A\,.
\end{eqnarray*}
Assume first that $\varepsilon=1$ and write $\left(1-\lambda_{2}yN\left(x,y\right)\right)^{\delta/\lambda_{2}}=\sum N_{a,b}x^{a}y^{b}$.
The previous equation writes 
\begin{eqnarray*}
\sum_{a+b>0}\left(\lambda_{1}a+\lambda_{2}b-\delta\right)N_{a,b}x^{a}y^{b} & = & -\delta A\left(x,y\right)
\end{eqnarray*}
and we know that a formal solution exists. As a consequence $A\left(x,y\right)=\sum_{a,b\geq0}A_{a,b}x^{a}y^{b}$
is holomorphic and, since $\delta=\lambda_{1}n+\lambda_{2}m$,
\begin{eqnarray*}
\frac{\lambda_{2}}{q}\left(-p\left(a-n\right)+q\left(b-m\right)\right)N_{a,b} & = & A_{a,b}\,.
\end{eqnarray*}
When $\left(a,b\right)\notin\mathcal{A}:=\left(q,p\right)\ww Z+\left(n,m\right)$
the estimate
\begin{eqnarray*}
\lambda_{2}\left|N_{a,b}\right| & \leq & q\left|A_{a,b}\right|
\end{eqnarray*}
 proves the convergence of 
\begin{eqnarray*}
N_{0}\left(x,y\right) & := & \sum_{\left(a,b\right)\in\ww Z_{\geq0}^{2}\backslash\mathcal{A}}N_{a,b}x^{a}y^{b}\,.
\end{eqnarray*}
We finally set 
\begin{eqnarray*}
\tilde{N}\left(x,y\right) & := & \frac{1}{\lambda_{2}y}\left(1-N_{0}^{\lambda_{2}/\delta}\right)
\end{eqnarray*}
 which is of course a formal power series.

Assume now that $\varepsilon=-1$ so that $Y_{0}=\lambda_{2}\pp y$.
Let $\left\{ B=0\right\} $ be the (formal) vanishing locus of $y+\lambda_{2}N$.
The power series $B$ is not zero since $X_{0}$ admits $\left\{ y=0\right\} $
for separatrix. Because $N\left(0,0\right)=\left(Y_{0}\ddt N\right)\left(0,0\right)=0$
we deduce that $B\left(x,y\right)=y-\hat{s}\left(x\right)$ for some
$\hat{s}\in\frml x$ so that there exists $\hat{N}\in\frml{x,y}$
such that
\begin{eqnarray*}
y+\lambda_{2}N & = & \left(y-\hat{s}\right)\exp\hat{N}\,.
\end{eqnarray*}
We obtain that
\begin{eqnarray*}
y^{-m-1}A & = & Y_{0}\ddt\left(y+\lambda_{2}N\right)^{-m}\\
 & = & \left(y-\hat{s}\right)^{-m-1}\exp\left(-m\hat{N}\right)\left(1+\left(y-\hat{s}\right)\pp y\hat{N}\right)
\end{eqnarray*}
so that the hypothetical poles of $y^{-m-1}A$ are of the form $\left\{ B=0\right\} $
and of order $m+1\neq1$. Hence $y^{-m-1}A$ admits a meromorphic
primitive $E$ with respect to $\pp y$ such that $E\left(x,0\right)=0$
and 
\begin{eqnarray*}
E & = & \left(y-\hat{s}\right)^{-m}F
\end{eqnarray*}
with $F\in\frml{x,y}$, $F\left(0,0\right)=1$. Hence $\left(y-\hat{s}\right)F^{-1/m}$
is holomorphic and the following holomorphic function solves our problem
 : 
\begin{eqnarray*}
\tilde{N}\left(x,y\right) & := & \frac{E^{-1/m}-y}{\lambda_{2}}\,.
\end{eqnarray*}
\end{proof}
\begin{cor}
\label{cor:res-sad-trans-rigid}The couple $\left(Z,Y\right)$ is
analytically conjugate to some $\left(Z_{0}+K\hat{Y},\hat{Y}\right)$
where $\hat{Y}\cdot K=0$. If moreover $Y\trs Y_{0}$ then $K=0$.\end{cor}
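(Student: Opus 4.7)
The strategy is to combine the tangential rigidity already proved in this section with the transverse convergence just established in the lemma. First, I would apply Lemma~\ref{lem:godbillon} to extract from the non-degenerate Lie algebra $\lie{Z,Y}$ a closed meromorphic time-form for $Z$, whose polar locus is a separatrix of $Z$ by Lemma~\ref{lem:pole-is-sep}. Proposition~\ref{pro:res-sad-tang-rigid} then furnishes an analytic tangential diffeomorphism reducing us to the case $Z=(Q\circ u)X = Z_0 + (Q\circ u)R\,Y_0$, where in the new coordinates $Y$ remains a meromorphic vector field with the same formal normalisation $\hat Y$.

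Next I would split on whether $Y\trs Y_0$, which by Proposition~\ref{pro:res-sad-form-aff-lie} corresponds to $c\neq 0$ in the decomposition $\hat Y = L(cZ_0 + dy^{-\varepsilon}Y_0)$. In this transverse case, the argument sketched in the text immediately preceding the lemma shows that the meromorphy of $cL\circ\mathcal{N}$ together with $\varepsilon\delta/\lambda_2\neq 0$ force the formal transverse normaliser $N$ itself to converge. Then $\mathcal{N}:=\flw{Y_0}{N}{}$ is a bona fide analytic diffeomorphism conjugating $(Z,Y)$ to $(Z_0,\hat Y)$, so $K=0$.

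In the remaining case $c=0$, I would use the convergent $\tilde N$ produced by the lemma and set $\tilde{\mathcal{N}}:=\flw{Y_0}{\tilde N}{}$, which is an analytic diffeomorphism. Proposition~\ref{pro:chg_coord}(2) applied to $(Z_0,Y_0)$---valid since $[Z_0,Y_0]=(Q\circ u)\delta_0\,Y_0$ with $Y_0\cdot((Q\circ u)\delta_0)=0$---gives $\tilde{\mathcal{N}}^* Z = Z_0 + \tilde K\,\tilde{\mathcal{N}}^* Y_0$ for an explicit analytic coefficient $\tilde K$. Since $\hat Y = dLy^{-\varepsilon}Y_0$ is a meromorphic multiple of $Y_0$, so is $\tilde{\mathcal{N}}^* Y_0$, and the correction rewrites as $K\hat Y$ for a suitable meromorphic $K$. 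Equation~(\ref{eq:res-sad-trans-rigid-eq-h}), solved by $\tilde N$ by construction, is precisely the condition that makes $\tilde{\mathcal{N}}^* Y$ coincide with $\hat Y$ after possibly absorbing a scalar into the isotropy freedom of $\hat Y$.

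The vanishing $\hat Y\cdot K=0$ is then automatic from the bracket: an analytic conjugacy preserves $[Z,Y]=\delta Y$, so on the target $[Z_0+K\hat Y,\hat Y]=\delta\hat Y$; subtracting the identity $[Z_0,\hat Y]=\delta\hat Y$ yields $[K\hat Y,\hat Y]=-(\hat Y\cdot K)\hat Y=0$. The main difficulty I anticipate is the careful bookkeeping needed to verify that $\tilde{\mathcal{N}}^* Y$ matches $\hat Y$ on the nose rather than merely up to a meromorphic factor; this entails combining equation~(\ref{eq:res-sad-trans-rigid-eq-h}) with the explicit flow formula~(\ref{eq:res-sad-flot-Y}) to identify the coefficient of $Y_0$ precisely.
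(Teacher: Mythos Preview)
Your approach is essentially that of the paper, which is even terser: after noting that the case $Y\trs Y_{0}$ (i.e.\ $c\neq0$) has already been dealt with, it simply declares that $\tilde{\mathcal N}:=\flw{Y_{0}}{\tilde N}{}$ conjugates $Y$ to $\hat Y$, sets $\tilde Z:=\tilde{\mathcal N}_{*}Z=Z_{0}+K\hat Y$, and derives $\hat Y\cdot K=0$ from the bracket exactly as you do. The ``difficulty'' you anticipate dissolves once you observe that the right-hand side $A$ of equation~(\ref{eq:res-sad-trans-rigid-eq-h}) is the fixed meromorphic function determined by $Y$; since both the formal $N$ and the convergent $\tilde N$ solve that equation with the \emph{same} $A$, the flow formula~(\ref{eq:res-sad-flot-Y}) yields $\tilde{\mathcal N}^{*}\hat Y=\mathcal N^{*}\hat Y=Y$ on the nose, no scalar adjustment needed. (Minor slip: where you write $\tilde{\mathcal N}^{*}Z$ you mean $\tilde{\mathcal N}_{*}Z$; Proposition~\ref{pro:chg_coord}(2) applied to $(Z_{0},Y_{0})$ computes $\tilde{\mathcal N}^{*}Z_{0}$, and one then uses that flows of $Y_{0}$ preserve the $Y_{0}$-direction to see $\tilde{\mathcal N}_{*}Z-Z_{0}$ is a multiple of $Y_{0}$, hence of $\hat Y$.)
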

\begin{proof}
On the one hand we have already discussed the case $Y\trs Y_{0}$.
On the other hand we just proved that $\tilde{\mathcal{N}}:=\flw{Y_{0}}{\tilde{N}}{}$
is an analytic conjugacy between $\hat{Y}$ and $Y$. Set $\tilde{Z}:=\tilde{\mathcal{N}}_{*}Z=Z_{0}+K\hat{Y}$
with $K$ meromorphic. Since $\left[\tilde{Z},\hat{Y}\right]=\delta\hat{Y}$
it turns out that $\hat{Y}\ddt K=0$. 
\end{proof}

\subsection{Final reduction~: proof of Theorem~\ref{thm:res-sad-final}}

~

We only give a sketch of proof: computations for saddle\textendash{}nodes
have already been done in~\cite{Tey-ExSN}, and they generalize straightforwardly
to resonant saddles. Assume that $X=X_{0}+\left(K\circ u\right)x^{n}y^{m}W_{0}$
and that $\tilde{Y}=dx^{n}y^{m}\left(Q\circ u\right)^{\gamma}W_{0}$
(we already know that $K=0$ if $\tilde{Y}\trs W_{0}$). By playing
with linear transformations $\left(x,y\right)\mapsto\left(x,\alpha y\right)$,
one can choose $d=1$. We seek a function $F\in\germ u$ such that
\begin{eqnarray*}
\mathcal{N}_{F} & := & \flw{\tilde{Y}}{F\circ u}{}
\end{eqnarray*}
satisfies
\begin{eqnarray*}
\mathcal{N}_{F}^{*}X & = & X_{0}+\left(P\circ u\right)x^{n}y^{m}W_{0}\,.
\end{eqnarray*}
Because $W_{0}\ddt\left(Q\circ u\right)=0$ we can write $\mathcal{N}_{F}=\flw{Y_{0}}{FQ\circ u}{}$
where $Y_{0}=x^{n}y^{m}W_{0}$, so that $\mathcal{N}_{F}^{*}\left(\left(K\circ u\right)Y_{0}\right)=\left(K\circ u\right)Y_{0}$.
We end up with the cohomological equation
\begin{eqnarray*}
X_{0}\ddt\left(FQ\circ u\right)+\delta_{0}FQ\circ u & = & \left(K-P\right)\circ u
\end{eqnarray*}
where $\left[X_{0},Y_{0}\right]=\delta_{0}Y_{0}$, which can be rewritten
\begin{eqnarray*}
X_{0}\ddt\left(x^{n}y^{m}FQ\circ u\right) & = & -\delta_{0}x^{n}y^{m}\left(K-P\right)\circ u\,.
\end{eqnarray*}
In the above\textendash{}mentioned reference the reader will find
a way of (explicitly) constructing a polynomial $P$, satisfying the
sought properties, such that the equation admits an analytic solution
$F$. 
\begin{rem}
In fact we prove in~\cite{Tey-ExSN} that $\tilde{Z}$ is analytically
conjugate to $Z_{0}$ if, and only if, $P=0$. Hence $\lie{Z_{0},\tilde{Y}}$
and $\lie{Z_{0}+\left(P\circ u\right)\tilde{Y},\tilde{Y}}$ are formally
conjugate affine Lie algebras which are not analytically conjugate.
This remark provides a family of examples complementing those of~\cite{Cervo}.
\end{rem}
\bigskip{}
In the result of rigidity we presented here the main obstruction regarding
the convergence of the formal objects we built is the collinearity
of the vector fields $\tilde{Y}$ and $Y_{0}$. In other terms denote
by $\Omega:=\left(Z_{0},Y_{0},\hat{Y}\right)$ the (singular) $3$\textendash{}web
corresponding to the formal normal form of $\lie{Z,Y}$: if $\Omega$
is non\textendash{}degenerate then $\lie{Z,Y}$ is analytically conjugate
to $\lie{Z_{0},\hat{Y}}$ whereas in the opposite case such a result
is not true in general. The geometrical meaning of that fact is that
we have constructed the formal conjugacies using the flows along $Z$
and $Y$, while the convergence of the power series involved related
to those vector fields being transverse to each\textendash{}others.
When this is no longer the case we cannot infer the convergence of
the whole power series, but only of a sub\textendash{}series which
«does not contain» first\textendash{}integrals of $Y_{0}$ since the
only information we have is of the type «$Y_{0}\ddt N$ is convergent».

\section{\label{sec:delta-lattice}Classification by the delta\textendash{}lattice }
\begin{defn}
\label{def_delta-lattice} Let $Z$ be a germ of a meromorphic vector
field at $\left(0,0\right)$. We define the \textbf{transverse structures}
of $Z$ as the set 
\begin{eqnarray*}
\tx{TS}\left(Z\right) & := & \left\{ \left(\delta,Y\right)\,:\, Y\trs Z,\,\left[Z,Y\right]=\delta Y\right\} 
\end{eqnarray*}
and call $\Delta\left(Z\right)$ its \textbf{delta\textendash{}lattice}:
\begin{eqnarray*}
\Delta\left(Z\right) & := & \left\{ \delta\,:\,\left(\exists Y\right)\,\left(\delta,Y\right)\in\tx{TS}\left(Z\right)\right\} \,.
\end{eqnarray*}

\end{defn}
In this section we focus once more on holomorphic germs of a vector
field. As before if $Z\left(0,0\right)=0$ we define $\left\{ \lambda_{1},\lambda_{2}\right\} $
the spectrum of the linear part of $Z$ at $\left(0,0\right)$. When
$Z$ is not nilpotent we label the eigenvalues in such a way that
$\lambda_{2}\neq0$ and set $\lambda:=\nf{\lambda_{1}}{\lambda_{2}}$.
The delta\textendash{}lattice of a holomorphic vector fields characterize
its dynamical class.
\begin{prop}
\label{pro:delta-classif}Assume that $\Delta\left(Z\right)\neq\emptyset$.
Then $\Delta\left(Z\right)$ is an affine $\ww Z$\textendash{}lattice
corresponding to exactly one of the following cases. 
\begin{enumerate}
\item $\Delta\left(Z\right)=\ww C$ if, and only if, $Z$ is regular at
$\left(0,0\right)$.
\item $\Delta\left(Z\right)=\lambda_{1}\ww Z\oplus\lambda_{2}\ww Z$ of
rank $2$ if, and only if, $Z$ is not nilpotent and $\lambda\notin\ww R$.
\item $\Delta\left(Z\right)=\lambda_{1}\ww Z\oplus\lambda_{2}\ww Z$ dense
in a real line if, and only if, $Z$ is not nilpotent, analytically
linearizable and $\lambda\in\ww R\backslash\ww Q$.
\item $\Delta\left(Z\right)=\delta\ww Z$ of rank 1 ($\delta\neq0$) if,
and only if, $Z$ is not nilpotent, $\lambda\in\ww Q$ and $Z$ is
analytically conjugate to its formal normal form.
\item $\Delta\left(Z\right)=\left\{ \delta\right\} $ if, and only if, $Z$
is a resonant saddle or a saddle\textendash{}node not analytically
conjugate to its formal normal form, or has nilpotent linear part.
In the latter case $\delta=0$.
\end{enumerate}
\end{prop}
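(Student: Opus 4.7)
My plan is to first endow $\Delta(Z)$ with an affine $\ww Z$-lattice structure, then to determine it case-by-case using the analytic normal forms established in Theorems~\ref{thm:irrat-th}, \ref{thm:res-sad-final} and Corollary~\ref{cor:non-isolated}.

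\emph{Lattice structure.} Given two elements $(\delta_{1},Y_{1}),(\delta_{2},Y_{2})\in\tx{TS}(Z)$, since both $Y_{i}$ are transverse to $Z$ I can expand $Y_{2}=fY_{1}+gZ$ in dimension $2$ for meromorphic $f,g$ with $f\neq0$. Expanding $[Z,Y_{2}]=\delta_{2}Y_{2}$ via Leibniz yields $Z\ddt f=(\delta_{2}-\delta_{1})f$ and $Z\ddt g=\delta_{2}g$. Conversely, given $(\delta_{1},Y_{1})\in\tx{TS}(Z)$ and any meromorphic $f\neq0$ with $Z\ddt f=\eta f$ for some $\eta\in\ww C$, the pair $(\delta_{1}+\eta,fY_{1})$ lies in $\tx{TS}(Z)$. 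Thus $\Delta(Z)-\delta_{1}$ coincides with the set of such cofactors $\eta$, which is stable under sums (via $f_{1}f_{2}$) and negation (via $1/f$), hence is a $\ww Z$-submodule of $\ww C$. Therefore $\Delta(Z)$ is an affine $\ww Z$-lattice.

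\emph{Regular and non-nilpotent irrational cases.} If $Z$ is regular I would rectify to $Z=\pp y$; every $\delta\in\ww C$ is then realized by $Y=e^{\delta y}(A(x)\pp x+B(x)\pp y)$ with $A\neq0$, so $\Delta(Z)=\ww C$ (case 1). If $Z$ is non-nilpotent with $\lambda\notin\ww Q$ and $\Delta(Z)\neq\emptyset$, the existence of a non-degenerate Lie algebra combined with Theorem~\ref{thm:irrat-th} forces $Z$ to be analytically linearizable, after which Proposition~\ref{pro:irrat-form-aff-lie} constructs, for every $(n,m)\in\ww Z^{2}$, a transverse $Y=x^{n}y^{m}(dx\pp x+cy\pp y)$ realizing $\delta=n\lambda_{1}+m\lambda_{2}$. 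Therefore $\Delta(Z)=\lambda_{1}\ww Z\oplus\lambda_{2}\ww Z$, which has rank $2$ and is discrete when $\lambda\notin\ww R$ (case 2), dense in a real line when $\lambda\in\ww R\setminus\ww Q$ (case 3).

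\emph{Rational cases.} For a non-isolated singularity, Corollary~\ref{cor:non-isolated} reduces $Z$ to $\lambda_{2}y\pp y$ with explicit transverse $Y=y^{m}(aZ+b\pp x)$ for each $m\in\ww Z$, giving $\Delta(Z)=\lambda_{2}\ww Z$ (case 4). For an isolated singularity with $\lambda\in\ww Q$, Theorem~\ref{thm:res-sad-final} places $(Z,Y)$ in canonical form with polynomial data $(P,Q)$, where the formal invariants $(Q,\mu)$ depend only on $Z$. When $P=0$ (so $Z$ is analytically conjugate to its formal normal form $(Q\circ u)X_{0}$), Proposition~\ref{pro:res-sad-form-aff-lie} exhibits a family of transverse structures $\hat{Y}=x^{n}y^{m}(Q\circ u)^{\gamma}(cZ_{0}+dW_{0})$ which generate a rank~$1$ lattice $\delta_{0}\ww Z\subseteq\Delta(Z)$; combined with the lattice structure above, this yields $\Delta(Z)=\delta_{0}\ww Z$ (case 4). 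When $P\neq0$, the same proposition --- combined with the rigidity in Theorem~\ref{thm:res-sad-final} --- pins down $(n,m)$ uniquely, whence $\Delta(Z)=\{\delta\}$ is a singleton (case 5). Finally, if $Z$ has nilpotent linear part, Lemma~\ref{lem:degen} forces $\delta=0$, so $\Delta(Z)\subseteq\{0\}$ with equality as soon as some transverse commuting vector field exists (also case 5).

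\emph{Main obstacle.} The delicate step is showing that $P\neq0$ reduces $\Delta(Z)$ to a singleton. The lattice structure would otherwise yield a meromorphic $f$ satisfying $Z\ddt f=(\delta_{2}-\delta_{1})f$ for two distinct $\delta_{i}\in\Delta(Z)$; combining $f$ with the transverse structures should, via the reduction scheme of Section~\ref{sec:res-sad}, produce an isotropy of $\tilde{Z}$ killing the Bernoulli perturbation $(P\circ u)x^{n}y^{m}W_{0}$, contradicting $P\neq0$. Making this argument precise is the main technical hurdle and relies on the explicit cohomological computations underlying Theorem~\ref{thm:res-sad-final}.
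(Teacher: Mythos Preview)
Your proof takes essentially the same route as the paper's: first establish the affine $\ww Z$-lattice structure via the decomposition $\tilde Y=aZ+bY$ and the observation that $b^{n}Y$ realizes $\delta+n(\tilde\delta-\delta)$, then exhaust the dynamical types by citing Theorems~\ref{thm:irrat-th}, \ref{thm:res-sad-final}, Corollary~\ref{cor:non-isolated} and Lemma~\ref{lem:degen}. Your recasting of $\Delta(Z)-\delta_{1}$ as the module of meromorphic ``eigenvalues'' of $Z\cdot$ is a nice conceptual gloss on the same computation.

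Two remarks. First, for $\lambda\notin\ww R$ (case~2) the paper invokes Poincar\'e's linearization theorem, not Theorem~\ref{thm:irrat-th}, which only covers the quasi-resonant range $\lambda\in\ww R_{<0}\setminus\ww Q$; you should make this distinction. Second, the paper does not single out your ``main obstacle'': it simply says cases~(4) and~(5) are ``taken care of by Theorem~\ref{thm:res-sad-final}''. The implicit argument is that the uniqueness clause there (normal forms are unique up to diagonal maps $(x,y)\mapsto(\alpha x,\beta y)$ with $u(\alpha,\beta)^{k}=1$, which preserve the exponents $(n,m)$) forces $\delta=n\lambda_{1}+m\lambda_{2}$ to be an analytic invariant of $Z$ once $P\neq0$; no separate isotropy argument is needed. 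Your sketch via cofactors and cohomological equations is therefore more elaborate than what the paper actually supplies, though not wrong.
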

\begin{rem}
In~(4) the formal normal form considered is either the linear part
of $Z$ (non\textendash{}resonant singularity) or its Dulac\textendash{}Poincaré
normal form. The latter case encompasses resonant saddles ($\lambda\in\ww Q_{<0}$)
and saddle\textendash{}nodes ($\lambda=0$) studied in Theorem~\ref{thm:res-sad-final}
as well as resonant nodes ($\lambda\in\ww N\cup\nf 1{\ww N}$ and
the normal form is given \emph{e.g.} when $\lambda\in\ww N$ by $\lambda_{1}\left(x+y^{\lambda}\right)\pp x+\lambda_{2}y\pp y$).\end{rem}
\begin{proof}
Assume there exists $\left(\delta,Y\right),\,\left(\tilde{\delta},\tilde{Y}\right)\in\tx{TS}\left(Z\right)$
and write $\tilde{Y}=aZ+bY$ with $a,\, b\in\mero{x,y}$ and $b\neq0$.
Then $Z\cdot a=\tilde{\delta}a$ and $Z\cdot b=\left(\tilde{\delta}-\delta\right)b$.
For $n\in\ww Z$ the meromorphic germ $b^{n}$ solves $Z\cdot b^{n}=n\left(\tilde{\delta}-\delta\right)b^{n}$
so that $\left(\delta+n\left(\tilde{\delta}-\delta\right),b^{n}Y\right)\in\tx{TS}\left(Z\right)$
and $\Delta\left(Z\right)$ is an affine $\ww Z$\textendash{}lattice.

The remaining of the proof is only a restatement of both classical
results and results established in the previous sections. We indicate
shortly below why the condition on $Z$ is sufficient, the necessity
of said condition then follows from the exhaustion of all dynamical
types of germs of a vector field by the above discrimination. In case~(1)
if $Z\left(0,0\right)\neq0$ the rectification theorem states that,
in some appropriate local chart, $Z=\pp y$. Hence being given any
$\delta\in\ww C$ the vector field $Y:=e^{\delta y}\pp x$ induces
a non\textendash{}degenerate Lie algebra $\lie{Z,Y}$ of ratio $\delta$.
Case~(2) is a straightforward consequence of Poincaré's linearization
theorem and of direct computations analogous to those we carried out
for the last part of the proof of Corollary~\ref{cor:non-isolated}.
In fact all linearizable cases are dealt with in the same way, which
accounts for~(3) \emph{via }the use of Theorem~\ref{thm:irrat-th},
as well as~(4) for non\textendash{}resonant singularities. Cases~(4)
and~(5) for resonant singularities are taken care of by Theorem~\ref{thm:res-sad-final}.
Notice that if $Z$ is nilpotent then Lemma~\ref{lem:degen} ensures
that $\Delta\left(Z\right)=\left\{ 0\right\} $ (here the holomorphy
of $Z$ is an essential part of the argument).
\end{proof}

\section{\label{sec:galois}Computation of Galois\textendash{}Malgrange's
groupoid}

We assume in this section that $Z$ is a germ of a \emph{meromorphic
}vector field at $\left(0,0\right)$.

\subsection{Definitions}

~

We do not introduce in full details what the Galois\textendash{}Malgrange
groupoid for a meromorphic vector field $Z$ (or foliation $\fol Z$)
is. We refer to~\cite{Malg} and~\cite{Casa} for precise definitions
and general properties. Roughly speaking a $\mathcal{D}$\textendash{}groupoid
is a groupoid (pseudo\textendash{}group) of germs of a diffeomorphism
\begin{eqnarray*}
\Gamma\,:\,\left(\ww C^{2},p\right) & \to & \left(\ww C^{2},q\right)
\end{eqnarray*}
solutions to some differentially stable (sheaf of) ideal $\mathcal{I}$
of partial differential equations, meromorphic in $\Gamma$ and polynomial
in its derivatives, and where $p,\, q$ are taken in a polydisc $\Delta$.
The important point here is that $\mathcal{I}$ may only be defined
outside an analytic hypersurface, which in our case will be located
along the polar locus of $Z$ and $Y$, as well as along their tangency
locus. To such a $\mathcal{D}$\textendash{}groupoid is associated
a $\mathcal{D}$\textendash{}algebroid, which is a (sheaf of) Lie
algebra whose elements are solutions to the linearized equations near
the identity element. Not every $\mathcal{D}$\textendash{}algebroid
is integrable (\emph{i.e. }that of a $\mathcal{D}$\textendash{}groupoid). 
\begin{defn}
Let $Z$ be a germ of a holomorphic vector field in $\ww C^{2}$ with
underlying foliation $\fol Z$.
\begin{enumerate}
\item The \textbf{Galois\textendash{}Malgrange groupoid} of $\fol Z$ is
the smallest $\mathcal{D}$\textendash{}groupoid $\gal{\fol Z}$ whose
$\mathcal{D}$\textendash{}algebroid $\dlie{\fol Z}$ contains the
sheaf $\mathcal{O}Z$ of vector fields tangent to $\fol Z$. 
\item The \textbf{Galois\textendash{}Malgrange groupoid} of the meromorphic
vector field $Z$ is the smallest $\mathcal{D}$\textendash{}groupoid
$\gal Z$ whose $\mathcal{D}$\textendash{}algebroid $\dlie Z$ contains
$Z$.
\end{enumerate}
\end{defn}
\begin{rem}
~
\begin{enumerate}
\item Obviously $\gal Z<\gal{\fol Z}$ as $\dlie Z<\dlie{\fol Z}$.
\item The sheaf of germs of an isotropy of $\fol Z$ (\emph{resp.} $Z$)
form a $\mathcal{D}$\textendash{}groupoid $\aut{\fol Z}$ (\emph{resp.}
$\aut Z$) containing $\gal{\fol Z}$ (\emph{resp.} $\gal Z$). 
\end{enumerate}
\end{rem}
Knowing whether $\gal{\fol Z}$ is a proper subgroupoid of $\aut{\fol Z}$
pertains to the problem of Liouvillian integrability of the underlying
differential equation, as we explain now. In the case of a foliation
or vector field equations of $\mathcal{I}$ defining elements $\Gamma\in\gal{\fol Z}$
can be restricted to the transverse direction, yielding an ideal of
equations $\mathcal{I}_{\trs}$ of a transverse $\mathcal{D}$\textendash{}groupoid
(see~\cite[Lemme 2.4]{Casa}). This transverse $\mathcal{D}$\textendash{}groupoid
encodes all the information regarding the question of Liouvillian
integrability of the foliation. The information is concentrated in
the \textbf{transverse rank} $\ell_{Z}$ of $\fol Z$, which is the
rank of $\mathcal{I}_{\trs}$. This rank does not depend on the transverse
disc considered nor on the local analytic charts in which $Z$ is
expressed.
\begin{namedthm}[Theorem of Casale\textendash{}Malgrange ]
 Let $\ell_{Z}$ be the transverse rank of $\fol Z$. The following
properties are equivalent.
\begin{enumerate}
\item $\ell_{Z}<\infty$.
\item $\ell_{Z}\leq3$.
\item $\gal{\fol Z}$ is a proper subgroupoid of $\aut{\fol Z}$.
\item $\fol Z$ admits a Godbillon\textendash{}Vey sequence of finite length
$\ell_{Z}$ and no sequence of lesser length.
\end{enumerate}
\end{namedthm}

\subsection{Integrability theorem}

~
\begin{defn}
Let $Z$ be a meromorphic vector field on a small polydisc $\Delta$
around $\left(0,0\right)$. We denote by $\sol$ the coherent sheaf
of germs of a holomorphic function $f$ at points of $\Delta\backslash\left\{ \left(0,0\right)\right\} $,
called $\delta$\textendash{}solutions, such that 
\begin{eqnarray*}
Z\cdot f & = & \delta f\,.
\end{eqnarray*}
In particular $\sol[][0]$ is the sheaf of holomorphic first\textendash{}integrals
of $Z$.\end{defn}
\begin{rem}
\label{rem_sol_space}A straightforward computation shows the inclusion,
for every $\alpha,\,\beta\in\ww C$,
\begin{eqnarray*}
\sol[Z][\alpha]\sol[Z][\beta] & \subset & \sol[Z][\alpha+\beta]\,.
\end{eqnarray*}
Besides if $f\in\sol[Z][\alpha]$ then $Y\cdot f\in\sol[Z][\alpha+\delta]$
for any vector field $Y$ such that $\left[Z,Y\right]=\delta Y$.
\end{rem}
We are now able to state our theorem:
\begin{thm}
\label{thm:compute_gal}Assume that the transverse structures $\tx{TS}\left(Z\right)$
is not empty and the delta\textendash{}lattice is not reduced to $\left\{ 0\right\} $.
For $\left(\delta,Y\right)\in\tx{TS}\left(Z\right)$ denote by $\aut{Z,\lie{Z,Y}}$
the pseudo\textendash{}group $\aut Z\cap\aut{\lie{Z,Y}}$ of invariance
of the pair $\left(Z,\lie{Z,Y}\right)$. Then the following properties
hold.
\begin{enumerate}
\item ~
\begin{eqnarray*}
\gal Z & = & \bigcap_{\left(\delta,Y\right)\in\tx{TS}\left(Z\right)}\aut{Z,\lie{Z,Y}}\,.
\end{eqnarray*}

\item There exists a sheaf of Lie algebras of dimension at most $2$, $\dtrans$,
such that 
\begin{eqnarray*}
\dlie Z & = & \ww CZ\oplus\dtrans\,.
\end{eqnarray*}
It is actually an integrable $\mathcal{D}$\textendash{}algebra whose
corresponding $\mathcal{D}$\textendash{}goupoid $\trans[Z]$ is isomorphic
to the transverse $\mathcal{D}$\textendash{}groupoid of $\fol Z$.
The dimension of $\dtrans$ is therefore $\ell_{Z}$.
\end{enumerate}
\end{thm}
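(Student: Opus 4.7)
Part~(1) will be established by a double inclusion and part~(2) will follow directly from the explicit shape of the algebroid attached to the right-hand side. The forward inclusion is immediate from minimality: for each $\left(\delta,Y\right)\in\tx{TS}\left(Z\right)$ the pseudo-group $\aut{Z,\lie{Z,Y}}$ is a $\mathcal{D}$-groupoid, carved out of the sheaf of local biholomorphisms by the differentially stable condition of preserving $Z$ and the two-dimensional vector space $\ww CZ+\ww CY$. Its $\mathcal{D}$-algebroid is the sheaf of germs of vector fields $W$ with $\left[W,Z\right]=0$ and $\left[W,\lie{Z,Y}\right]\subset\lie{Z,Y}$, which clearly contains $Z$ because $\left[Z,aZ+bY\right]=b\delta Y\in\lie{Z,Y}$ for every $a,b\in\ww C$. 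The defining minimality of $\gal Z$ then forces $\gal Z\subset\aut{Z,\lie{Z,Y}}$, and intersecting over all transverse structures yields one half of~(1).

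For the reverse inclusion, set $\mathcal{G}:=\bigcap_{\left(\delta,Y\right)\in\tx{TS}\left(Z\right)}\aut{Z,\lie{Z,Y}}$; we already know $\gal Z\subset\mathcal{G}$. Since $\mathcal{G}$ is itself a $\mathcal{D}$-groupoid and $\gal Z$ is characterized by its algebroid $\dlie Z$ via minimality, it suffices to prove $\tx{Lie}\left(\mathcal{G}\right)=\dlie Z$. Fix a distinguished $\left(\delta_{0},Y_{0}\right)\in\tx{TS}\left(Z\right)$ and use $Z\trs Y_{0}$ to write every $W\in\tx{Lie}\left(\mathcal{G}\right)$ uniquely as $W=fZ+gY_{0}$ with meromorphic $f,g$. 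The relation $\left[W,Z\right]=0$ decomposes into
\[
Z\cdot f=0,\qquad Z\cdot g+\delta_{0}g=0,
\]
placing $f$ in the sheaf of first integrals of $Z$ and $g$ in the $\left(-\delta_{0}\right)$-solution sheaf of Section~\ref{sec:galois}. Running through every other $\left(\delta,Y\right)\in\tx{TS}\left(Z\right)$ adds further linear constraints and, using the affine $\ww Z$-lattice structure of $\Delta\left(Z\right)$ from Proposition~\ref{pro:delta-classif} along with Remark~\ref{rem_sol_space}, the admissible $\left(f,g\right)$ are cut down to exactly the Lie closure of $\left\{ Z\right\} $ inside the jet algebra, namely $\dlie Z$. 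Hence $\mathcal{G}=\gal Z$.

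The explicit decomposition just obtained descends immediately to the statement of~(2): setting $\dtrans$ equal to the sheaf of transverse components $gY_{0}$ arising in $\tx{Lie}\left(\mathcal{G}\right)$, the splitting $\dlie Z=\ww CZ\oplus\dtrans$ holds and is compatible with brackets because
\[
\left[Z,gY_{0}\right]=\left(Z\cdot g+\delta_{0}g\right)Y_{0}=0
\]
on this subspace. The bound $\dim\dtrans\leq2$ reflects the one-dimensionality of the transverse direction to $\fol Z$: infinitesimal symmetries of a one-dimensional transverse disc preserving a marked point and an affine structure form an at most two-dimensional Lie algebra. The integrability of $\dtrans$ into a $\mathcal{D}$-groupoid $\trans[Z]$ and its identification with the transverse $\mathcal{D}$-groupoid of $\fol Z$ then follow from the theorem of Casale--Malgrange recalled above, the equality $\dim\dtrans=\ell_{Z}$ being nothing but the definition of transverse rank.

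\textbf{Main obstacle.} The delicate step is the reverse inclusion in~(1), and more precisely the claim that scanning over \emph{every} $\left(\delta,Y\right)\in\tx{TS}\left(Z\right)$ cuts $\tx{Lie}\left(\mathcal{G}\right)$ down to exactly $\dlie Z$. The non-triviality of $\Delta\left(Z\right)$ appearing in the hypotheses is essential here: it supplies enough distinct eigenvalues for $\left[Z,\cdot\right]$ on the transverse component to eliminate any spurious symmetry that could otherwise survive all the linear constraints simultaneously. Without such a rich delta-lattice, only Abelian Lie algebras would enter the intersection, providing far too little transverse rigidity for the desired collapse.
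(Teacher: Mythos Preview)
Your forward inclusion is fine and matches the paper's. The reverse inclusion, however, has a genuine gap, and it is exactly at the point you yourself flag as the ``main obstacle''.

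You want to show $\tx{Lie}\left(\mathcal{G}\right)\subset\dlie Z$. But you never give an independent description of $\dlie Z$ against which to test the elements $W=fZ+gY_{0}$ you produce. The phrase ``the Lie closure of $\left\{ Z\right\} $ inside the jet algebra'' is not the definition of $\dlie Z$; the Galois--Malgrange algebroid is characterized by a minimality property among $\mathcal{D}$-algebroids, not as a Lie-algebraic closure. So the sentence ``the admissible $\left(f,g\right)$ are cut down to exactly $\dlie Z$'' is an assertion, not an argument: you neither compute $\tx{Lie}\left(\mathcal{G}\right)$ explicitly nor identify $\dlie Z$ by other means, and invoking the lattice structure of $\Delta\left(Z\right)$ does not by itself force the collapse.

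The paper avoids this trap by reversing the logic. Instead of comparing $\tx{Lie}\left(\mathcal{G}\right)$ to $\dlie Z$, it shows directly that $\mathcal{G}$ enjoys the \emph{universal} property defining $\gal Z$: for \emph{every} $\mathcal{D}$-groupoid $G$ whose algebroid $\frak{g}$ contains $Z$, one has $\tx{Lie}\left(\mathcal{G}\right)\subset\frak{g}$. The key technical input is that, in a rectifying chart $Z=\pp t$, $Y=e^{\delta t}\pp z$, any linear PDE $E$ partially defining $\frak{g}$ restricts on the transverse component $N$ to a transverse equation $\tilde{E}\left(\eta\right)=0$; Casale's construction (\cite[Th\'eor\`eme~3.2]{Casa}) then manufactures from $\tilde{E}$ a genuine transverse structure $\left(\hat{\delta},\hat{Y}\right)\in\tx{TS}\left(Z\right)$. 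Since $\mathcal{G}$ was defined by intersecting over \emph{all} transverse structures, its elements already satisfy $\tilde{E}$, hence $E$. This is the missing mechanism: every defining equation of every admissible $\frak{g}$ is accounted for by some $\left(\hat{\delta},\hat{Y}\right)$. Your argument lacks any analogue of this step. The hypothesis $\Delta\left(Z\right)\neq\left\{ 0\right\} $ is used in the paper only to ensure $\hat{\frak{g}}\cap\mathcal{O}Z=\ww CZ$ (tangential finiteness), not to drive the transverse reduction.

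For part~(2), your bound $\dim\dtrans\leq2$ is again asserted heuristically. In the paper this follows from an explicit rank-$\leq2$ linear system $\left(\star\right)$ in $\left(T_{\Gamma},N_{\Gamma}\right)$, obtained after decomposing every $\Gamma\in\aut Z$ as $\flw Y{N_{\Gamma}}{}\circ\flw Z{T_{\Gamma}}{}$ and translating the condition $\Gamma^{*}Y\in\lie{Z,Y}$ into differential equations on $N_{\Gamma}$. Without that computation the dimension bound and the identification with the transverse groupoid remain unproved.
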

In order to characterize elements $\Gamma\in\aut Z$ actually belonging
to $\gal Z$ we need to find a minimal set of proper equations satisfied
by $\Gamma$. These equations are obtained by studying the action
of $\aut Z$ on the other generators $Y$ of non\textendash{}degenerate
Lie algebras $\lie{Z,Y}$ of ratio $\delta\in\ww C$.
\begin{lem}
\label{lem:charac_aut}$\Gamma\in\aut{Z,\lie{Z,Y}}$ if, and only
if,
\begin{eqnarray*}
\begin{cases}
\Gamma^{*}Z & =Z\\
\Gamma^{*}Y & =d_{\Gamma}Z+c_{\Gamma}Y\,\,\,\,\,\,\,\,\,\,\,\,\mbox{for some }\left(c_{\Gamma},d_{\Gamma}\right)\in\ww C_{\neq0}\times\ww C
\end{cases}
\end{eqnarray*}
with $\delta d_{\Gamma}=0$.\end{lem}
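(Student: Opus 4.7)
The plan is to unwind the definition $\aut{Z,\lie{Z,Y}}=\aut Z\cap\aut{\lie{Z,Y}}$, exploit the fact that $\lie{Z,Y}$ is $2$-dimensional over $\ww C$ (since $Z\pitchfork Y$), and then enforce compatibility with the commutation relation $\left[Z,Y\right]=\delta Y$ to obtain the constraint $\delta d_{\Gamma}=0$.

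For the forward implication, assume $\Gamma\in\aut{Z,\lie{Z,Y}}$. The condition $\Gamma^{*}Z=Z$ is the definition of membership in $\aut Z$. Because $\Gamma^{*}$ restricts to a Lie algebra automorphism of the $\ww C$-vector space $\lie{Z,Y}=\ww C Z\oplus\ww C Y$, the element $\Gamma^{*}Y$ must belong to this span, so $\Gamma^{*}Y=d_{\Gamma}Z+c_{\Gamma}Y$ for some $(c_{\Gamma},d_{\Gamma})\in\ww C^{2}$. Invertibility of $\Gamma^{*}$ (which sends $Z$ to $Z$) forces $c_{\Gamma}\neq 0$; otherwise $\Gamma^{*}$ would collapse $\lie{Z,Y}$ into $\ww C Z$.

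Next, apply $\Gamma^{*}$ to $[Z,Y]=\delta Y$. Using $\Gamma^{*}Z=Z$ and bilinearity of the bracket, the left-hand side becomes
\begin{eqnarray*}
\Gamma^{*}[Z,Y] \;=\; [Z,\,d_{\Gamma}Z+c_{\Gamma}Y] \;=\; c_{\Gamma}\delta Y,
\end{eqnarray*}
while the right-hand side is $\delta\Gamma^{*}Y=\delta d_{\Gamma}Z+\delta c_{\Gamma}Y$. Comparing coefficients in the basis $(Z,Y)$ of $\lie{Z,Y}$ (valid since $Z\pitchfork Y$ ensures $\ww C$-linear independence) yields $\delta d_{\Gamma}=0$, as required.

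For the converse, suppose the three conditions hold. Then $\Gamma\in\aut Z$ by the first. For the second, observe that $\Gamma^{*}$ sends the $\ww C$-basis $(Z,Y)$ of $\lie{Z,Y}$ to $(Z,\,d_{\Gamma}Z+c_{\Gamma}Y)$, which is again a $\ww C$-basis (since $c_{\Gamma}\neq 0$), so $\Gamma^{*}$ preserves $\lie{Z,Y}$ as a $\ww C$-vector space. The bracket compatibility reduces to the computation above: the identity $[Z,d_{\Gamma}Z+c_{\Gamma}Y]=\delta(d_{\Gamma}Z+c_{\Gamma}Y)$ holds precisely when $\delta d_{\Gamma}=0$, which is our hypothesis. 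Hence $\Gamma^{*}$ is a Lie algebra automorphism of $\lie{Z,Y}$ and $\Gamma\in\aut{\lie{Z,Y}}$.

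I do not expect any real obstacle here: the argument is a two-line linear algebra calculation built on top of the functorial identity $\Gamma^{*}[Z,Y]=[\Gamma^{*}Z,\Gamma^{*}Y]$. The only qualitative point worth underlining in the write-up is the dichotomy encoded by $\delta d_{\Gamma}=0$: in the genuinely affine case $\delta\neq 0$ one automatically has $d_{\Gamma}=0$, so $\Gamma^{*}Y=c_{\Gamma}Y$, whereas in the Abelian case $\delta=0$ the parameter $d_{\Gamma}$ remains free, reflecting the larger symmetry group of commuting pairs.
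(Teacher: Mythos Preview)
Your proof is correct and follows essentially the same approach as the paper's. The only organizational difference is that the paper first writes $\Gamma^{*}Y=d_{\Gamma}Z+c_{\Gamma}Y$ with \emph{function} coefficients, uses $[Z,\Gamma^{*}Y]=\delta\Gamma^{*}Y$ to place $d_{\Gamma}\in\sol[Z][\delta]$ and $c_{\Gamma}\in\sol[Z][0]$, and only then invokes $\lie{Z,\Gamma^{*}Y}=\lie{Z,Y}$ to force these to be constants (whence $\delta d_{\Gamma}=0$); you go straight to constant coefficients from the vector-space preservation and then read off $\delta d_{\Gamma}=0$ from the bracket, which is slightly more direct. One small remark on your converse: bracket compatibility for $\Gamma^{*}$ is automatic for any local diffeomorphism, so once $\Gamma^{*}$ maps the basis $(Z,Y)$ bijectively into $\lie{Z,Y}$ you already have a Lie algebra automorphism; the condition $\delta d_{\Gamma}=0$ is therefore a free consequence rather than something that needs to be assumed and checked.
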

\begin{proof}
We have $\left[Z,\Gamma^{*}Y\right]=\delta\Gamma^{*}Y$ which means
$\Gamma^{*}Y=d_{\Gamma}Z+c_{\Gamma}Y$ for unique $d_{\Gamma}\in\sol[][\delta]$
and $c_{\Gamma}\in\sol[][0]$. The condition $\lie{Z,\Gamma^{*}Y}=\lie{Z,Y}$
then implies $c_{\Gamma},\, d_{\Gamma}\in\ww C$ and finally $d_{\Gamma}=0$
if $\delta\neq0$.
\end{proof}

\subsubsection{Action of $\aut Z$ on $\lie{Z,Y}$}

~
\begin{lem}
\label{lem:aut_action_on_Y}Let $\lie{Z,Y}$ be a non\textendash{}degenerate
Lie algebra of ratio $\delta$, meromorphic on a small polydisc $\Delta$
centered at $\left(0,0\right)$. Take $\Gamma\in\aut Z$ defined on
a connected neighborhood $U'\subset\Delta$ of a point $p_{\Gamma}\in\Delta\backslash\left\{ \left(0,0\right)\right\} $
ranging in $U\supset U'$, such that:
\begin{itemize}
\item $Z|_{U}$ and $Y|_{U}$ are holomorphic and transverse,
\item $Z|_{U}$ is rectifiable.
\end{itemize}
Then the following properties hold.
\begin{enumerate}
\item There exists $T_{\Gamma}\in\sol[][0]$ and $N_{\Gamma}\in\sol[][-\delta]$
such that 
\begin{eqnarray*}
\Gamma & = & \flw Y{N_{\Gamma}}{}\circ\flw Z{T_{\Gamma}}{}\,.
\end{eqnarray*}

\item Moreover
\begin{eqnarray*}
\Gamma^{*}Y & = & \frac{\exp\left(\delta T_{\Gamma}\right)}{1+Y\cdot N_{\Gamma}}\left(Y-\left(Y\cdot T_{\Gamma}\right)Z\right)
\end{eqnarray*}
with $Y\cdot N_{\Gamma}\in\sol[][0]$ and $Y\cdot T_{\Gamma}\in\sol$.
\end{enumerate}
\end{lem}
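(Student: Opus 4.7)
The plan is to rectify $Z$ on $U$ so that $Z = \pp y$ in appropriate analytic coordinates, and to exploit the rigid form both $\Gamma$ and $Y$ must take there. The bracket relation $[Z,Y] = \delta Y$ with $Z = \pp y$ forces $Y = e^{\delta y}(A(x)\pp x + B(x)\pp y)$ with $A(x) \neq 0$ by transversality; and $\Gamma \in \aut Z$ forces $\Gamma(x,y) = (\phi(x), y + \tau(x))$ for some holomorphic $\phi,\tau$.

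For part~(1), I match the composition $\flw Y{N}{} \circ \flw Z{T}{}(x,y)$ with $\Gamma(x,y)$ coordinate by coordinate. The $Z$-flow shifts $y$ by $T$; the $Y$-flow then mixes both coordinates. Since the $y$-dependence of $Y$ is purely exponential, the trick is to reparametrize time via $\dd{\sigma} = e^{\delta y}\dd{s}$, reducing the flow of $Y$ to that of $Y_0 := A(x)\pp x + B(x)\pp y$. Denoting by $\alpha(\sigma,x_0)$ and $\beta(\sigma,x_0)$ the $x$- and $y$-components of the $Y_0$-flow starting at $(x_0, 0)$, the $x$-equation determines $\sigma = n(x) := \alpha^{-1}(\phi(x), x)$ (a function of $x$ alone); the $y$-equation fixes the $Z$-time $T_\Gamma(x,y) := \tau(x) - \beta(n(x), x)$; and inverting the reparametrization $F(\sigma, x) := \int_0^\sigma e^{-\delta\beta(\sigma',x)}\dd{\sigma'} = e^{\delta y}s$ forces $N_\Gamma(x,y) = e^{-\delta y} G(x)$ with $G(x) := F(n(x), x)$. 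One reads off $Z\cdot T_\Gamma = 0$ and $Z\cdot N_\Gamma = -\delta N_\Gamma$, proving the membership assertions.

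For part~(2), I appeal to Proposition~\ref{pro:chg_coord}. Since $Z\cdot T_\Gamma = 0$, item~(3) gives $(\flw Z{T_\Gamma}{})^* Z = Z$ and $(\flw Z{T_\Gamma}{})^* Y = e^{\delta T_\Gamma}(Y - (Y\cdot T_\Gamma)Z)$; item~(2) gives $(\flw Y{N_\Gamma}{})^* Y = Y/(1 + Y\cdot N_\Gamma)$. Using the functorial identity $(\flw Z{T_\Gamma}{})^*(fY) = (f \circ \flw Z{T_\Gamma}{}) \cdot (\flw Z{T_\Gamma}{})^* Y$ and composing the two pullbacks, the stated formula follows once one checks that $f := 1/(1 + Y\cdot N_\Gamma)$ is constant along $Z$-orbits. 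This reduces to $Z\cdot(Y\cdot N_\Gamma) = 0$: from $[Z,Y] = \delta Y$ and $Z\cdot N_\Gamma = -\delta N_\Gamma$ one obtains $Z\cdot(Y\cdot N_\Gamma) = [Z,Y]\cdot N_\Gamma + Y\cdot(Z\cdot N_\Gamma) = \delta Y\cdot N_\Gamma - \delta Y\cdot N_\Gamma = 0$, placing $Y\cdot N_\Gamma$ in $\sol[][0]$. The analogous commutator identity $Z\cdot(Y\cdot T_\Gamma) = \delta(Y\cdot T_\Gamma)$ places $Y\cdot T_\Gamma$ in $\sol$.

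The main obstacle is part~(1) when $\delta \neq 0$: the $Y$-flow is implicit and the time reparametrization has to be unwound carefully to exhibit the purely exponential $y$-dependence of $N_\Gamma$. Once this explicit form is secured, part~(2) reduces to a formal computation using Proposition~\ref{pro:chg_coord} and the Jacobi identity.
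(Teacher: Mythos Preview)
Your proof is correct and follows essentially the same strategy as the paper's: rectify $Z$, write $\Gamma$ in rectified coordinates as $(x,y)\mapsto(\phi(x),y+\tau(x))$, produce $N_\Gamma\in\sol[][-\delta]$ matching the transverse coordinate and $T_\Gamma\in\sol[][0]$ absorbing the remaining $Z$-drift, then apply Proposition~\ref{pro:chg_coord} for part~(2). The only real difference is in how $N_\Gamma$ is obtained in part~(1): the paper invokes the implicit function theorem on $g(t,z,\tau):=\pi_z\circ\flw{\tilde Y}{\tau}{(t,z)}$ (using $\partial_\tau g\neq0$ from transversality) to solve $\pi_z\circ\mathcal N=\beta$ directly, whereas you explicitly integrate the $Y$-flow via the time reparametrisation $\dd\sigma=e^{\delta y}\dd s$ reducing it to the autonomous flow of $Y_0=A(x)\pp x+B(x)\pp y$. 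Your route is more hands-on and makes the exponential structure of $N_\Gamma$ visible; the paper's is shorter and avoids tracking the reparametrisation. Both rely on the same transversality hypothesis $A\neq0$ at the key invertibility step.

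One small remark on your write-up: when you say ``$F(\sigma,x)=e^{\delta y}s$ forces $N_\Gamma(x,y)=e^{-\delta y}G(x)$'', bear in mind that the $Y$-flow starts from the intermediate point $(x,y+T_\Gamma)$, so the $y$ appearing in $e^{\delta y}s$ is really $y+T_\Gamma$. Since $T_\Gamma$ depends only on $x$, the substitution $y'\mapsto y'+T_\Gamma(x)$ is harmless and your conclusion $N_\Gamma(x,y)=e^{-\delta y}G(x)$ (hence $N_\Gamma\in\sol[][-\delta]$) stands; but it would be cleaner to make that substitution explicit. Part~(2) is identical to the paper's argument, including the use of $Y\cdot N_\Gamma\in\sol[][0]$ to pull the factor $\frac{1}{1+Y\cdot N_\Gamma}$ through $\mathcal T^*$.
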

\begin{proof}
~
\begin{enumerate}
\item Let $\psi\,:\, U\to\tilde{U}$ be a rectifying chart of $Z$ and denote
$\left(t,z\right)$ a system of coordinates on $\tilde{U}$, that
is $\psi^{*}Z=\tilde{Z}:=\pp t$ and $z$ is the «transverse» coordinates.
We ask that $\psi$ be holomorphic, one to one and send $p_{\Gamma}$
on $\left(0,0\right)$; set $\tilde{q}:=\psi\left(q_{\Gamma}\right)$.
Since $\tilde{\Gamma}:=\psi^{*}\Gamma$ belongs to $\aut{\pp t}$
we have
\begin{eqnarray*}
\tilde{\Gamma}\left(t,z\right) & = & \left(t+\alpha\left(z\right),\beta\left(z\right)\right)
\end{eqnarray*}
 for some $\alpha,\,\beta\in\mathcal{O}\left(\psi\left(U'\right)\right)$
with $\left(\alpha\left(0\right),\beta\left(0\right)\right)=\tilde{q}$.\\
Write $\tilde{Y}:=\psi^{*}Y$, so that $\left[\tilde{Z},\tilde{Y}\right]=\delta\tilde{Y}$.
For $T,\, N\in\mathcal{O}\left(\psi\left(U'\right)\right)$ apply
Proposition~\ref{pro:chg_coord} with $\mathcal{N}:=\flw{\tilde{Y}}N{}$
and $\mathcal{T}:=\flw{\tilde{Z}}T{}$. We seek $\eta\in\germ z$
such that $N\left(t,z\right):=\eta\left(z\right)\exp\left(-\delta t\right)\in\sol[\pp t][-\delta]$
solves
\begin{eqnarray*}
\pi_{z}\circ\mathcal{N}\left(t,z\right) & = & \beta\left(z\right)\,,
\end{eqnarray*}
 where $\pi_{z}$ is the natural projection $\left(t,z\right)\mapsto z$.
This function is given by the implicit function theorem applied to
the map
\begin{eqnarray*}
g\left(t,z,\tau\right) & := & \pi_{z}\circ\flw{\tilde{Y}}{\tau}{\left(t,z\right)}\,,
\end{eqnarray*}
 since $\frac{\partial g}{\partial\tau}\left(t,z,\tau\right)=\pi_{z}\circ\tilde{Y}\circ\flw{\tilde{Y}}{\tau}{\left(t,z\right)}$
does not vanish for $\tilde{Z}$ and $\tilde{Y}$ are transverse on
$\tilde{U}$. Now $\left(\mathcal{N}^{-1}\circ\tilde{\Gamma}\right)\in\aut{\pp t}$
is the identity in the transverse variable and therefore is of the
form $\left(t,z\right)\mapsto\flw{\tilde{Z}}{\tilde{\alpha}\left(z\right)}{\left(t,z\right)=\left(t+\tilde{\alpha}\left(z\right),z\right)}$
with $\tilde{\alpha}\in\mathcal{O}\left(\psi\left(U'\right)\right)$.
Back in the original coordinates we deduce the result with $N_{\Gamma}:=N\circ\psi^{-1}$
and $T_{\Gamma}:=T\circ\psi^{-1}$.
\item Write $\Gamma=\mathcal{N}\circ\mathcal{T}$ with $\mathcal{N}:=\flw Y{N_{\Gamma}}{}$
and $\mathcal{T}:=\flw Z{T_{\Gamma}}{}$. From Proposition~\ref{pro:chg_coord}
we deduce the expressions 
\begin{eqnarray*}
\mathcal{N}^{*}Y & = & \frac{1}{1+Y\cdot N_{\Gamma}}Y\\
\mathcal{T}^{*}Y & = & \exp\left(\delta T_{\Gamma}\right)\left(Y-\left(Y\cdot T_{\Gamma}\right)Z\right)\,.
\end{eqnarray*}
But $Y\cdot N_{\Gamma}\in\sol[][0]$ so that $\left(Y\cdot N_{\Gamma}\right)\circ\mathcal{T}=Y\cdot N_{\Gamma}$,
which yields the result.
\end{enumerate}
\end{proof}
\begin{cor}
\label{cor:eq_groupoid_Y}Outside the tangency and polar loci of $Z$
and $Y$, the sheaf of $\ww C$\textendash{}linear systems
\begin{align*}
\begin{cases}
Z\cdot T_{\Gamma} & =0\\
\delta Y\cdot T_{\Gamma} & =0\\
Y\cdot Y\cdot T_{\Gamma} & =0\\
\, & \,\\
Z\cdot N_{\Gamma}+\delta N_{\Gamma} & =0\\
Y\cdot Y\cdot N_{\Gamma} & =0
\end{cases} & \tag{\ensuremath{\star}}
\end{align*}
defines the $\mathcal{D}$\textendash{}groupoid of invariance $\aut{Z,\lie{Z,Y}}$.
Let us define the respective transverse and tangential $\mathcal{D}$\textendash{}groupoids
\begin{eqnarray*}
\trans[Z,Y] & := & \aut{Z,\lie{Z,Y}}\cap\left\{ \Gamma\,:\, T_{\Gamma}=0\right\} \\
\tang[Z,Y] & := & \aut{Z,\lie{Z,Y}}\cap\left\{ \Gamma\,:\, N_{\Gamma}=0\right\} 
\end{eqnarray*}
 so that
\begin{eqnarray*}
\aut{Z,\lie{Z,Y}} & = & \trans[Z,Y]\circ\tang[Z,Y]\,.
\end{eqnarray*}
 Then the corresponding $\mathcal{D}$\textendash{}algebroid of each
factor is a sheaf of Lie algebras of dimension at most $2$. \end{cor}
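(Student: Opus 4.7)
The plan is to translate the algebraic characterization of $\aut{Z,\lie{Z,Y}}$ provided by Lemma~\ref{lem:charac_aut} into the system $(\star)$ of partial differential equations on the local invariants $(T_{\Gamma},N_{\Gamma})$ attached to $\Gamma$ through Lemma~\ref{lem:aut_action_on_Y}. From part~(2) of that lemma we have
\[
\Gamma^{*}Y\;=\;\frac{\exp(\delta T_{\Gamma})}{1+Y\cdot N_{\Gamma}}\bigl(Y-(Y\cdot T_{\Gamma})Z\bigr),
\]
so setting $c_{\Gamma}:=\exp(\delta T_{\Gamma})/(1+Y\cdot N_{\Gamma})$ and $d_{\Gamma}:=-c_{\Gamma}(Y\cdot T_{\Gamma})$, the structural identity $\Gamma^{*}Y=c_{\Gamma}Y+d_{\Gamma}Z$ holds automatically, and Lemma~\ref{lem:charac_aut} reduces to requiring $(c_{\Gamma},d_{\Gamma})\in\ww C_{\neq 0}\times\ww C$ with $\delta d_{\Gamma}=0$.

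The equations $Z\cdot T_{\Gamma}=0$ and $Z\cdot N_{\Gamma}+\delta N_{\Gamma}=0$ already follow from $T_{\Gamma}\in\sol[][0]$ and $N_{\Gamma}\in\sol[][-\delta]$ in Lemma~\ref{lem:aut_action_on_Y}(1); together with $[Z,Y]=\delta Y$ they automatically yield $Z\cdot c_{\Gamma}=0$ and $Z\cdot d_{\Gamma}=\delta d_{\Gamma}$. The remaining transverse constancies $Y\cdot c_{\Gamma}=0=Y\cdot d_{\Gamma}$, combined with $\delta d_{\Gamma}=0$, translate after a direct differentiation into exactly the three equations $\delta Y\cdot T_{\Gamma}=0$, $Y\cdot Y\cdot N_{\Gamma}=0$ and $Y\cdot Y\cdot T_{\Gamma}=0$. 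Since this translation is reversible, $(\star)$ is both necessary and sufficient. Differential stability of the ideal is then verified by applying $Z\cdot$ and $Y\cdot$ to each equation: using $[Z,Y]=\delta Y$ once more, every consequence reduces to an equation already listed in $(\star)$ or to a trivial identity.

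For the factorization $\aut{Z,\lie{Z,Y}}=\trans[Z,Y]\circ\tang[Z,Y]$, the key observation is that $(\star)$ decouples into a block involving only $T_{\Gamma}$ (the last three equations) and a block involving only $N_{\Gamma}$ (the first two). Hence given $\Gamma=\flw{Y}{N_{\Gamma}}{}\circ\flw{Z}{T_{\Gamma}}{}\in\aut{Z,\lie{Z,Y}}$, the factor $\flw{Z}{T_{\Gamma}}{}$ has $N\equiv 0$ and $T$-invariant equal to $T_{\Gamma}$, so it satisfies the $T$-block and lies in $\tang[Z,Y]$; symmetrically $\flw{Y}{N_{\Gamma}}{}\in\trans[Z,Y]$. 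For the dimension bound, one works in a local rectifying chart for $Z$ where $Z=\partial_{s}$ and where the relation $[Z,Y]=\delta Y$ forces $Y=a_{0}(z)e^{\delta s}\partial_{s}+b_{0}(z)e^{\delta s}\partial_{z}$ with $b_{0}\neq 0$. On solutions of $Z\cdot N+\delta N=0$, namely $N(s,z)=e^{-\delta s}n_{0}(z)$, the equation $Y\cdot Y\cdot N=0$ reduces to a second-order linear ODE in $n_{0}$, whose solution space is at most two-dimensional; an analogous analysis treats $\tang[Z,Y]$. The main obstacle, requiring the most care, is establishing differential stability of $(\star)$ so that the system genuinely cuts out the $\mathcal{D}$-groupoid $\aut{Z,\lie{Z,Y}}$ rather than a larger formal scheme of equations.
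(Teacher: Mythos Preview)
Your argument is correct and follows essentially the same route as the paper: both use Lemma~\ref{lem:aut_action_on_Y}(1) to obtain the two equations coming from $T_\Gamma\in\sol[][0]$ and $N_\Gamma\in\sol[][-\delta]$, then translate the constancy conditions of Lemma~\ref{lem:charac_aut} on $(c_\Gamma,d_\Gamma)$ via the formula of Lemma~\ref{lem:aut_action_on_Y}(2) into the remaining three equations. Two small remarks: you have swapped the labeling of the blocks (the \emph{first three} equations of~$(\star)$ involve $T_\Gamma$, the \emph{last two} involve $N_\Gamma$); and where you stress differential stability of the ideal, the paper instead emphasizes that $T_\Gamma,N_\Gamma$ are obtained from $\Gamma$ via the implicit function theorem and hence depend meromorphically on $\Gamma$ alone (order zero in its derivatives), which is what makes $(\star)$ genuinely a system of $\mathcal{D}$-groupoid equations on $\Gamma$.
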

\begin{proof}
The first and third equations comes from Lemma~\ref{lem:aut_action_on_Y}~(1).
According to~(2) of said lemma the condition $\Gamma^{*}Y=d_{\Gamma}Z+c_{\Gamma}Y$,
with $\left(c_{\Gamma},d_{\Gamma}\right)\in\ww C_{\neq0}\times\ww C$
as in Lemma~\ref{lem:charac_aut}, is equivalent to 
\begin{eqnarray*}
\begin{cases}
\frac{\exp\left(\delta T_{\Gamma}\right)}{1+Y\cdot N_{\Gamma}} & =c_{\Gamma}\\
-c_{\Gamma}Y\cdot T_{\Gamma} & =d_{\Gamma}
\end{cases} &  & .
\end{eqnarray*}
Observe that because $T_{\Gamma}\in\sol[][0]$ we have $Y\cdot T_{\Gamma}\in\sol[][\delta]$
and the second equation yields $\delta Y\cdot T_{\Gamma}=0$, as well
as $Y\cdot Y\cdot T_{\Gamma}=0$. Now applying $Y\cdot$ to the first
equation we obtain $Y\cdot Y\cdot N_{\Gamma}=0$. The converse direction
is clear.

By construction $T_{\Gamma}$ and $N_{\Gamma}$ are obtained from
$\Gamma$ through the implicit function theorem, and therefore depend
meromorphically on $\Gamma$ and on none of its derivatives. Both
independent sub\textendash{}systems of~$\left(\star\right)$ characterizes
a $\mathcal{D}$\textendash{}groupoid, which has rank at most $2$
as can be checked using $Z\trs Y$.
\end{proof}

\subsubsection{Proof of Theorem~\ref{thm:compute_gal}}

~

For $\left(\delta,Y\right)\in\tx{TS}\left(Z\right)$, that is $Y\trs Z$
and $\left[Z,Y\right]=\delta Y$, define
\begin{eqnarray*}
\frak{g}_{\left(\delta,Y\right)} & := & \left\{ TZ+NY\,:\,\left(T,N\right)\mbox{ solution of }\left(\star\right)\right\} \,.
\end{eqnarray*}

\begin{lem}
\label{lem:lie_invar}The (sheaf of) Lie algebra $\frak{g}_{\left(\delta,Y\right)}$
is the $\mathcal{D}$\textendash{}algebra of $G_{\left(\delta,Y\right)}:=\aut{Z,\lie{Z,Y}}$.
In particular $X\in\frak{g}_{\left(\delta,Y\right)}$ if, and only
if,
\begin{eqnarray*}
\left[Z,X\right] & = & 0\\
\left[Y,X\right] & = & d_{X}Z+c_{X}Y\,\,\,\,\,\,\,\,\mbox{for some}\, c_{X},\, d_{X}\in\ww C
\end{eqnarray*}
with $\delta d_{X}=0$. \end{lem}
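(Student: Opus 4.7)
The plan is to identify $\frak{g}_{(\delta,Y)}$ as the $\mathcal{D}$--algebra of $G_{(\delta,Y)}$ by differentiating a 1-parameter family of elements of $G_{(\delta,Y)}$ at the identity, and then translate the equations $(\star)$ into the bracket conditions announced in the lemma.

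For the first step, I pick a holomorphic 1--parameter family $s\mapsto \Gamma_{s}\in G_{(\delta,Y)}$ with $\Gamma_{0}=\tx{id}$, defined near a point where the hypotheses of Lemma~\ref{lem:aut_action_on_Y} hold. That lemma gives a unique decomposition $\Gamma_{s}=\flw{Y}{N_{s}}{}\circ\flw{Z}{T_{s}}{}$ with $(T_{s},N_{s})$ solving $(\star)$ for every $s$ and $(T_{0},N_{0})=(0,0)$. Differentiating at $s=0$ and setting $T:=\partial_{s}T_{s}|_{s=0}$, $N:=\partial_{s}N_{s}|_{s=0}$, I obtain $X:=\frac{d}{ds}\big|_{s=0}\Gamma_{s}=TZ+NY$. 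Since the system $(\star)$ is already $\ww C$--linear in $(T,N)$, its solutions form a vector space stable under differentiation of families, so $(T,N)$ solves $(\star)$ and $X\in\frak{g}_{(\delta,Y)}$. Conversely, for any $(T,N)$ solving $(\star)$, the family $\Gamma_{s}:=\flw{Y}{sN}{}\circ\flw{Z}{sT}{}$ belongs to $G_{(\delta,Y)}$ by Corollary~\ref{cor:eq_groupoid_Y}, which confirms that $\frak{g}_{(\delta,Y)}$ is the $\mathcal{D}$--algebra.

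For the bracket characterization, I compute directly using $[Z,Y]=\delta Y$ the identities
\begin{align*}
[Z,X] & = (Z\cdot T)Z+(Z\cdot N+\delta N)Y,\\
[Y,X] & = (Y\cdot T)Z+(Y\cdot N-\delta T)Y.
\end{align*}
The linear independence of $Z$ and $Y$ at generic points (which follows from $Z\trs Y$) lets me read off coefficients. Thus $[Z,X]=0$ is equivalent to the first and fourth equations of $(\star)$. Setting $d_{X}:=Y\cdot T$ and $c_{X}:=Y\cdot N-\delta T$, the condition $[Y,X]=d_{X}Z+c_{X}Y$ with $(c_{X},d_{X})\in\ww C^{2}$ and $\delta d_{X}=0$ amounts to showing that both coefficients are complex constants and that $\delta Y\cdot T=0$.

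To see the equivalence, note that granted $Z\cdot T=0$ and $Z\cdot N+\delta N=0$, using $[Z,Y]=\delta Y$ once more gives $Z\cdot(Y\cdot T)=\delta\,Y\cdot T$ and $Z\cdot(Y\cdot N)=0$. So the fifth equation $Y\cdot Y\cdot N=0$ combined with $Z\cdot(Y\cdot N)=0$ and the fact that any function annihilated by both $Z$ and $Y$ is locally constant (because $Z\trs Y$) force $Y\cdot N\in\ww C$. For the $T$ side, the second equation $\delta Y\cdot T=0$ gives $\delta d_{X}=0$; if $\delta=0$ then the third equation $Y\cdot Y\cdot T=0$ together with $Z\cdot (Y\cdot T)=0$ forces $Y\cdot T=d_{X}\in\ww C$, and $c_{X}=Y\cdot N$ is constant; if $\delta\neq 0$ then $Y\cdot T=0$, so $d_{X}=0$, and together with $Z\cdot T=0$ the condition $Z\trs Y$ forces $T$ locally constant, making $\delta T\in\ww C$ and therefore $c_{X}\in\ww C$ as well. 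The converse direction is obtained by reading these manipulations backward.

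The only genuine obstacle is the bookkeeping between $(\star)$ and the bracket conditions, specifically the split between the cases $\delta=0$ and $\delta\neq0$ for the coefficient $d_{X}$. The pivotal ingredient throughout is the elementary observation that a germ annihilated by two generically transverse vector fields is locally constant, which converts the second--order equations $Y\cdot Y\cdot T=0$ and $Y\cdot Y\cdot N=0$ into the constancy of the pair $(c_{X},d_{X})$ once the transport equations along $Z$ have been used.
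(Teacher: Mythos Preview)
Your argument is correct and follows essentially the same route as the paper's. You compute the same brackets $[Z,X]=(Z\cdot T)Z+(Z\cdot N+\delta N)Y$ and $[Y,X]=(Y\cdot T)Z+(Y\cdot N-\delta T)Y$, and deduce constancy of $c_X,d_X$ from the transversality $Z\trs Y$. Two small differences: the paper handles $c_X=Y\cdot N-\delta T$ uniformly by observing it lies in $\sol[Z][0]\cap\sol[Y][0]$ (using the second and fifth equations of $(\star)$ simultaneously), avoiding your case split on $\delta$; and the paper includes an explicit verification that $\frak{g}_{(\delta,Y)}$ is closed under Lie bracket, which you omit but which is implicit once you know it is the $\mathcal{D}$--algebra of a groupoid.
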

\begin{proof}
$\frak{g}_{\left(\delta,Y\right)}$ is clearly a (sheaf of) $\ww C$\textendash{}linear
space. We compute
\begin{eqnarray*}
\left[TZ+NY\,,\,\tilde{T}Z+\tilde{N}Y\right] & = & \left[TZ,\tilde{T}Z\right]+\left[TZ,\tilde{N}Y\right]-\left[\tilde{T}Z,NY\right]+\left[NY,\tilde{N}Y\right]\\
 & = & \left(\tilde{N}Y\cdot T-NY\cdot\tilde{T}\right)Z+\left(NY\cdot\tilde{N}-\tilde{N}Y\cdot N\right)Y\,.
\end{eqnarray*}
Now 
\begin{eqnarray*}
Y\cdot\left(NY\cdot\tilde{N}-\tilde{N}Y\cdot N\right) & = & 0\\
\delta Y\cdot\left(\tilde{N}Y\cdot T-NY\cdot\tilde{T}\right) & = & 0
\end{eqnarray*}
so that $\left[TZ+NY\,,\,\tilde{T}Z+\tilde{N}Y\right]\in\frak{g}_{\left(\delta,Y\right)}$
and $\frak{g}_{\left(\delta,Y\right)}$ is a Lie algebra. The fact
it is the $\mathcal{D}$\textendash{}algebra of $G_{\left(\delta,Y\right)}$
is an immediate consequence of the system~$\left(\star\right)$ already
being linear. Take now $X=TZ+NY\in\frak{g}_{\left(\delta,Y\right)}$;
a trivial computation ensures 
\begin{eqnarray*}
\left[Z,X\right] & = & \left(Z\cdot N\right)Y+N\left[Z,Y\right]=0\\
\left[Y,X\right] & = & \left(Y\cdot T\right)Z+\left(Y\cdot N-\delta T\right)Y\,.
\end{eqnarray*}
Because $Y\cdot N-\delta T\in\sol[][0]\cap\sol[Y][0]$ we deduce that
the function is a constant $c_{X}$. Likewise $Z\cdot Y\cdot T=\delta Y\cdot T=0=Y\cdot Y\cdot T$
implies $d_{X}:=Y\cdot T$ is constant. This constant vanishes if
$\delta\neq0$. The converse is clear.
\end{proof}
Define
\begin{eqnarray*}
\hat{G} & := & \bigcap_{\left(\delta,Y\right)\in\tx{TS}\left(Z\right)}G_{\left(\delta,Y\right)}
\end{eqnarray*}
 and the corresponding $\mathcal{D}$\textendash{}algebroid $\hat{\frak{g}}$
which, because of Corollary~\ref{cor:eq_groupoid_Y}, is a sheaf
of Lie algebras of dimension at most 3. Indeed the hypothesis that
the delta\textendash{}lattice $\Delta\left(Z\right)$ differs from
$\left\{ 0\right\} $ implies that 
\begin{eqnarray*}
\hat{\frak{g}}\cap\mathcal{O}Z & = & \ww CZ\,.
\end{eqnarray*}

The inclusion $\gal Z<\hat{G}$ follows from the definition. The reverse
inclusion is obtained by considering a\emph{ $\mathcal{D}$\textendash{}}groupoip
$G$ whose $\mathcal{D}$\textendash{}algebroid $\frak{g}$ contains
$Z$ and by showing that $\hat{g}\subset\frak{g}$. Let $E\neq0$
be a local linear partial differential equation of order $k$ partially
defining $\frak{g}$, that is 
\begin{eqnarray*}
E\left(T,N\right) & = & \sum_{\left|\alpha\right|\leq k}\lambda_{\alpha}\partial_{\alpha}T+\mu_{\alpha}\partial_{\alpha}N\,,
\end{eqnarray*}
where $k\in\ww Z_{\geq0}$, $\alpha\in\ww Z_{\geq0}^{2}$, $\lambda_{\alpha}$
and $\mu_{\alpha}$ belong to the sheaf of meromorphic functions near
$\left(0,0\right)$, such that 
\begin{eqnarray*}
TZ+NY\in\frak{g} & \Longrightarrow & E\left(T,N\right)=0\,.
\end{eqnarray*}
We assume that this equation is given near a point $p$ outside the
polar locus of $Z$ and $Y$, as well as outside their locus of tangency. 

Because $\ww CZ<\frak{g}$ we have
\begin{eqnarray*}
E\left(0,N\right) & = & 0
\end{eqnarray*}
whenever $NY\in\frak{g}$. We can find local analytic coordinates
$\left(t,z\right)$ in which $Z=\pp t$ and $Y=e^{\delta t}\pp z$.
Because $\left[Z,NY\right]=0$ we have
\begin{eqnarray*}
N\left(t,z\right) & = & e^{-\delta t}\eta\left(z\right)
\end{eqnarray*}
with $\eta$ meromorphic near $0$, and
\begin{eqnarray*}
\partial^{\left(n,m\right)}N\left(t,z\right) & = & \left(-\delta\right)^{n}e^{-\delta t}\eta^{\left(m\right)}\left(z\right)\,.
\end{eqnarray*}
The relation $E\left(0,N\right)=0$ corresponds therefore to some
linear differential equation $\tilde{E}\left(\eta\right)=0$ of a
$\mathcal{D}$\textendash{}algebroid on a transverse disk. According
to Casale\textendash{}Malgrange's theorem it contains the transverse
$\mathcal{D}$\textendash{}algebroid of the foliation. More precisely
the construction of G.~\noun{Casale} carried out in~\cite[Théorème 3.2]{Casa}
allows to recover a meromorphic vector field $\tilde{Y}$ near $\left(0,0\right)$
from $\tilde{E}$ such that $\left[Z,\hat{Y}\right]=\hat{\delta}\hat{Y}$
for some $\hat{\delta}\in\ww C$. If we assume $NY\in\hat{\frak{g}}$
then in particular $N\in\frak{g}_{\left(\hat{\delta},\hat{Y}\right)}$
and it also solves the equation $\tilde{E}$ in the local coordinates
$\left(t,z\right)$. We finally proved $\hat{\frak{g}}\subset\frak{g}$.

\bibliographystyle{preprints}
\bibliography{biblio}

\end{document}